\newtheorem{theorem}{Theorem}[section]
\newtheorem{lemma}[theorem]{Lemma}
\newtheorem{proposition}[theorem]{Proposition}
\newtheorem{corollary}[theorem]{Corollary}
\newcommand{\cref}[1]{Corollary~\textup{\ref{cor:#1}}}
\newcommand{\s}{\sigma}
\newcommand{\G}{\Gamma}
\newcommand{\Stab}{\textrm{Stab}}
\newcommand{\Sym}{\textrm{Sym}}
\newcommand{\rank}{\textrm{rank}}
\newcommand{\calP}{\mathcal P}
\newcommand{\calQ}{\mathcal Q}
\newcounter{reminder} 
\begin{document}

\title{Vertex-Faithful Regular Polyhedra}

\author{Gabe Cunningham\\
University of Massachusetts Boston\\
Boston, Massachusetts \\
and \\
Mark Mixer \\
Wentworth Institute of Technology \\
Boston, Massachusetts
}

\date{ \today }
\maketitle

\begin{abstract}

We study the abstract regular polyhedra with automorphism groups that act faithfully on their vertices, and show that each non-flat abstract regular polyhedron covers a ``vertex-faithful" polyhedron with the same number of vertices.  We then use this result and earlier work on flat polyhedra to study abstract regular polyhedra based on the size of their vertex set.  In particular, we classify all regular polyhedra where the number of vertices is prime or twice a prime. We also construct the smallest regular polyhedra with a prime squared number of vertices.

\vskip.1in
\medskip
\noindent
Key Words: abstract regular polytope, polyhedra, primes, groups, permutations

\medskip
\noindent
AMS Subject Classification (2010): Primary: 52B15. Secondary:  52B05, 05C25, 20B25.
\end{abstract}

\section{Introduction}
In recent years, the study of maps on compact surfaces has seen much attention. 
Often maps are investigated by considering how their automorphism groups act on faces of a certain dimension.  For instance, maps where the automorphism group acts transitively on the edges (edge-transitive maps) have been classified by Graver, \v{S}ir\'{a}\v{n}, Tucker, and Watkins~\cite{graver1997locally, EdgeTransOrientable}.

Regular maps --- those with maximal symmetry, whose automorphism groups act transitively on flags (incident triples of vertices, edges, and facets) --- have received the most consideration.
A regular map that satisfies one more condition (called the diamond property) can be seen as an abstract regular polyhedron, and the study of abstract regular polytopes has a rich history of its own~\cite{ARP}.  
	
In this paper, we will restrict our study to regular abstract polyhedra, and will be interested in both the action of the automorphism group on the vertices, and the prime factorization of the number of vertices.   Regular maps with specific actions on their vertices have been considered before; for instance, in~\cite{Jajcay2019}, where regular maps with quasiprimitive automorphism groups on vertices are discussed, and in~\cite{LiNotFaithful}, where regular maps with automorphism groups that do not act faithfully on their vertices, edges, or faces are analyzed.

Similarly, the prime factorization of the number of vertices (or faces of another dimension) has also been utilized in previous works.  In~\cite{Breda2015}, regular maps with a prime number of faces are examined, and in~\cite{Du2004} regular maps with simple underlying graphs whose order is the product of two primes are classified. Many of the maps in these classifications do not correspond to abstract polyhedra, and we find that the classification of regular polyhedra with a given number of vertices is somewhat tamer than corresponding classifications of maps.

  When examining polyhedra where the size of the vertex set has few prime factors, it is useful to understand the smallest examples fully.  Thankfully, much is known about the small regular polyhedra, as those with up to 4000 flags have been enumerated~\cite{conder-atlas}. Using these data, we observed that there appeared to be only two regular polyhedra with $b$ vertices for each prime $b \geq 5$; an observation which inspired this whole project.

This paper is structured as follows.  In Section~\ref{sec:def}  we provide some definitions and basic results about regular maps and regular abstract polyhedra.  In particular, in Section~\ref{sec:def} we will recall various results about flat regular polyhedra, and provide some basic results about regular polyhedra where the automorphism group acts faithfully on the vertices.  In Section~\ref{sec:few} we consider such polyhedra with few vertices.  Then, utilizing these results, in Sections~\ref{sec:prime},~\ref{sec:twiceprime}, and~\ref{sec:primesq}, we consider regular polyhedra where the number of vertices is prime, twice a prime, or a prime squared, respectively.   Specifically, we classify all regular polyhedra with either a prime number of vertices or twice a prime number of vertices. We also construct the smallest regular polyhedra with a prime squared number of vertices.


\section{Definitions and Useful Results}
\label{sec:def}
		

\subsection{Abstract Polyhedra}

In what follows, we recall some definitions and results from the theory of abstract regular polytopes (see~\cite{ARP}).  From here on out, we will omit the word abstract, and simply refer to polytopes and polyhedra.  A {\em polyhedron} is a ranked poset $\calP$ with the following properties.  The elements of $\calP$ are called {\em faces}, with ranks in $\{-1, \ldots, 3\}$.  The poset $\calP$ has a unique face of rank -1, a unique face of rank 3, and the faces of ranks 0, 1, and 2 are called {\em vertices}, {\em edges}, and {\em facets} respectively.   The maximal totally ordered subsets of $\calP$ are called {\em flags}, and each flag has exactly 5 faces, one for each rank.   We say that two faces are {\em incident} if they are on the same flag, and we say that two flags are {\em adjacent} if they differ by exactly one face.  In particular, we say that $\Psi$ and $\Phi$ are $i$-adjacent if they differ exactly in a face of rank $i$.
Given any two flags $\Psi$ and $\Phi$ of $\calP$, there is a sequence of flags $\Psi = \Psi_0, \Psi_1, \ldots, \Psi_k = \Phi$, so that each flag in the sequence contains $\Psi \cap \Phi$ and any two successive flags are adjacent.  Additionally, whenever $F < G$, and $\rank(F) = \rank(G)-2$, there are exactly two faces $H$ such that $F < H < G$; this is called the diamond property.
Given a vertex $v$ of a polyhedron, the collection of faces $F$ so that $v \leq F$ is incident to $v$ is called the \emph{vertex figure at $v$}.  

The {\em automorphism group} $\Gamma(\calP)$ of a polyhedron is the group of rank-preserving automorphisms of the partially ordered set.    When the size of the automorphism group is the same as the number of flags, all the flags will be in the same orbit, and the polyhedron is called {\em regular}.     In fact, each regular polyhedron can be thought of as a regular {\em map}.  Informally, a map $\calP$ is a family of finite polygons with the following four properties.  Any two polygons of the map meet in a common edge or vertex, or do not meet at all.  Each edge of the map belongs to exactly two polygons. The set of polygons containing a given vertex form a single cycle of polygons, where adjacent polygons in the cycle share a common edge.  Finally, between any two polygons is a chain of adjacent polygons.

Much is known about the automorphism groups of regular polyhedra.  If we fix a base flag $\Psi$ of $\calP$, then the automorphism group $\Gamma(\calP)$ of a regular polyhedron $\calP$ is generated by the three involutions $\rho_i$ (with $i \in \{0,1,2\}$), where $\rho_i$ sends $\Psi$ to the adjacent flag $\Psi^i$ differing in a face of rank $i$.  Thus, for example, the generator $\rho_0$ sends the base vertex to the other vertex incident to the base edge and keeps the base face fixed.  

 Any regular polyhedron $\calP$ has a (Schl\"{a}fli) type $\{p,q\}$ where each facet is a polygon with $p$ vertices, and each vertex is incident to $q$ edges.   In this case the automorphism group $\G(\calP)$ is a smooth quotient of the string Coxeter group $[p,q]$, and is called a \emph{string C-group} of rank 3, where
 
 $$[p,q] := \langle a, b, c \mid a^2 = b^2 = c^2 = (ab)^p = (bc)^q = (ac)^2 = 1 \rangle.$$
 
Each string C-group satisfies a particular {\em intersection condition} inherited from the Coxeter group.  Namely, let $\G  = \langle \rho_0, \rho_1, \rho_2 \rangle$  be the automorphism group of a regular polyhedron, and let $\G_0 := \langle \rho_1, \rho_2 \rangle$ be the stabilizer of the base vertex, and $\G_2 := \langle \rho_0, \rho_1 \rangle$ be the stabilizer of the base facet.  Then the {\em intersection condition} implies that  $\G_0 \cap \G_2 \cong \langle \rho_1 \rangle$.  In fact, every rank 3 string C-group is the
automorphism group of a unique regular polyhedron (see Theorem. 2E11 of \cite{ARP} ).  Due to this, we can refer to the (possibly infinite) universal regular polytope of type $\{p,q\}$ which has automorphism group the Coxeter group $[p,q]$.   

Let $\calP$ and $\calQ$ be regular polyhedra.  We say that $\calP$ \emph{covers} $\calQ$  if there a surjective function $\psi$ from $\calP$ to $\calQ$ that preserves incidence, rank, and has the property that if two flags of $\calP$ are $i$-adjacent then so are their images under $\psi$.  An isomorphism from a regular polyhedron $\calP$ to a regular polyhedron $\calQ$ is a bijection that preserves incidence and rank.  If $\calP$ covers $\calQ$, and $\calP$ is not isomorphic to $\calQ$, we say that $\calP$ is a \emph{proper cover} of $\calQ$.  
Furthermore, $\calP$ covers $\calQ$ if and only if there is an epimorphism from $\G(\calP)=\langle \rho_0, \rho_1, \rho_2 \rangle $ to $\G(\calQ)=\langle r_0, r_1, r_2 \rangle$ sending each $\rho_i$ to $r_i$.

Let $\G(\calP) = \langle \rho_0, \rho_1, \rho_2 \rangle$ be the automorphism group of a regular polyhedron.  It will be useful to define the {\em abstract rotations} $\s_1$ and $\s_2$, where $\s_1 := \rho_0 \rho_1$ and $\s_2 := \rho_1 \rho_2$.    The index of $\langle \s_1, \s_2 \rangle$ in $\G(\calP)$ is at most 2. When the index is exactly 2, we say that $\calP$ is {\em orientably regular}; otherwise, when the index is 1, we say that $\calP$ is {\em non-orientably regular}.    A regular polyhedron $\calP$ is orientably regular (or simply {\em orientable}) if and only if all identity words $w$ in terms of the generators $\rho_i$ have even length.  Thus $\calP$ is {\em non-orientable} when there is a trivial word $w$ in terms of the generators $\rho_i$ which has odd length.

To each polyhedron, we may associate its dual polyhedron $\calP^\delta$, which is constructed by reversing the partial order of $\calP$.   If $\calP^\delta$ and $\calP$ are isomorphic, then we say that $\calP$ is \emph{self-dual}.  
Similarly, to each regular polyhedron $\calP$ we may associate its {\em Petrie dual} (or more briefly {\em Petrial}) $\calP^\pi$.   The Petrial has the same vertices and edges as the original polyhedron.  The facets of $\calP^\pi$ are the {\em Petrie polygons} of $\calP$, defined so that exactly two successive edges of a Petrie polygon are edges of
a facet of $\calP$. Note that the Petrie dual of a polyhedron may fail to be a polyhedron.   If $\calP$ is isomorphic to $\calP^\pi$, then $\calP$ is said to be self-Petrie.   For a regular polyhedron, the length $r$ of a Petrie polygon is the order of the element $\rho_0 \rho_1 \rho_2$ in its automorphism group.  We refer to the (universal) regular polytope of type $\{p,q\}_r$ which has an automorphism group with the same presentation as the Coxeter group $[p,q]$ plus the additional relator $(\rho_0 \rho_1 \rho_2)^r$, denoted $[p,q]_r$.

For instance the (universal) polyhedron $\{4,4\}_6$ is the regular polyhedron that has automorphism group with the following presentation

 $$ \langle a, b, c \mid a^2 = b^2 = c^2 = (ab)^4 = (bc)^4 = (ac)^2 = (abc)^6= 1 \rangle.$$

We will later utilize the fact that the polytope $\{4,4\}_{2s}$ is isomorphic to the toroidal map $\{4,4\}_{(s,s)}$ (see~\cite{coxeter-moser} Section 8.6). Furthermore, the dual of the Petrial of the toroidal map $\{4,4\}_{(s,0)}$ will play an important role, and it will be denoted $(\{4,4\}_{(s,0)})^{\pi \delta}$. Note that the Petrie polygons of $\{4,4\}_{(s,0)}$ visit each vertex at most once, and so by Lemma 7B3 of \cite{ARP}, the Petrial of $\{4, 4\}_{(s,0)}$ (and thus its dual) is in fact a polyhedron.
 
 Given a regular polyhedron $\calP$ of type $\{p,q\}$, it is easy to see that $| \Gamma(\calP) | = 4e = 2vq = 2fp$ where $e$ is the number of edges, $v$ is the number of vertices, and $f$ is the number of facets. Furthermore, $\calP$ has at least $p$ vertices, and so it has at least $2pq$ flags. 
 

	\subsection{Flat regular polyhedra} \label{sec:flat}
	
		A regular polyhedron is said to be \emph{flat} if every face is incident to every vertex.
		If $\calP$ is a regular polyhedron of type $\{p, q\}$, then it is flat if and only if it is \emph{tight},
		which means that it has the minimum possible number of flags ($2pq$). 
		
		Flat (tight) regular polyhedra were classified in \cite{tight3}. The non-orientable ones fall
		into a small number of infinite families. The orientable ones are somewhat more diverse.
		From Theorem 3.3 of \cite{tight3}, we know that every flat orientably regular polyhedron has an automorphism group of the form
		\[ \Lambda(p,q)_{i,j} = [p,q] / (\s_2^{-1} \s_1 = \s_1^i \s_2^j), \]
		with $i$ and $j$ taken modulo $p$ and $q$, respectively.
		
		If we fix the number of vertices (which is equal to $p$), then we would like to know what values
		of $q$, $i$, and $j$ actually yield a flat polyhedron of type $\{p, q\}$. Here we summarize the results
		from \cite{tight3} that allow us to do so.
		
		\begin{proposition}[\cite{tight3} Proposition 4.2] \label{prop:flat-classification}
		Suppose $p,q \geq 3$ and that $0 \leq i \leq p-1$ and $0 \leq j \leq q-1$. Then
		$\Lambda(p,q)_{i,j}$ is the automorphism group of a flat orientably regular polyhedron
		of type $\{p, q\}$ if and only if there are values $p'$ and $q'$ such that:
		\begin{enumerate}
		\item $p'$ divides $p$ and $i+1$,
		\item $q'$ divides $q$ and $j-1$,
		\item The group $\Lambda(p,q')_{i,1}$ is the automorphism group of a flat orientably regular polyhedron
		of type $\{p, q'\}$, with $\langle \s_2 \rangle$ core-free in $\Lambda(p,q')_{i,1}$, and
		\item The group $\Lambda(p',q)_{-1,j}$ is the automorphism group of a flat orientably regular polyhedron
		of type $\{p', q\}$, with $\langle \s_1 \rangle$ core-free in $\Lambda(p',q)_{-1,j}$.
		\end{enumerate}
		\end{proposition}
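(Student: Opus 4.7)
The plan is to establish both directions of the biconditional, with the bulk of the work in the forward direction. I would begin by recording the basic facts about a flat orientably regular polyhedron $\calP$ with $\Gamma(\calP) = \Lambda(p,q)_{i,j}$: namely that $|\Gamma(\calP)| = 2pq$, that $\s_1$ has order $p$ and $\s_2$ has order $q$, and that the defining relation $\s_2^{-1}\s_1 = \s_1^i \s_2^j$ controls all commutation between the two rotations.

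For the forward direction, the natural candidates for $p'$ and $q'$ arise from the cores of $\langle \s_1 \rangle$ and $\langle \s_2 \rangle$ in $\Gamma(\calP)$. I would set $p'$ to be the order of the image of $\s_1$ in the quotient by $\Core(\langle \s_2 \rangle)$, and $q'$ to be the order of the image of $\s_2$ in the quotient by $\Core(\langle \s_1 \rangle)$. Conjugating $\s_2^{-1}\s_1 = \s_1^i\s_2^j$ by $\s_1$ and by $\s_2$ and tracking the resulting equations modulo these cores should force $p' \mid i+1$ and $q' \mid j-1$, and should further identify the two quotient groups with $\Lambda(p,q')_{i,1}$ and $\Lambda(p',q)_{-1,j}$. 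The remaining task is to check that these quotients are themselves string C-groups (so correspond to flat orientably regular polyhedra) with $\langle \s_2 \rangle$ and $\langle \s_1 \rangle$ respectively core-free, which follows from how the cores were chosen.

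For the backward direction, given $p'$ and $q'$ satisfying (a)--(d), I would build $\Lambda(p,q)_{i,j}$ as the subdirect product of $\Lambda(p,q')_{i,1}$ and $\Lambda(p',q)_{-1,j}$ amalgamated over a common quotient of type $\{p',q'\}$, and then verify that it has the expected order $2pq$, that the generators satisfy the required relation $\s_2^{-1}\s_1 = \s_1^i\s_2^j$, and that the intersection condition $\Gamma_0 \cap \Gamma_2 = \langle \rho_1\rangle$ holds. The main obstacle throughout is precisely this intersection condition: descending it to the two quotient groups in the forward direction, and lifting it back to the subdirect product in the backward direction. The delicacy is typical of rank-$3$ string C-group arguments, and here it is made more intricate by the cyclic torsion indices $i+1$ and $j-1$, which must be reconciled with the core-freeness hypotheses; but once the intersection condition is in hand in each direction, the flatness follows from a simple order count.
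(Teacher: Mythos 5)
First, a framing point: the paper does not prove this proposition at all --- it is quoted verbatim from Proposition 4.2 of the cited work on tight polyhedra, so there is no in-paper argument to compare yours against; I can only assess your sketch on its own terms.

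Your skeleton --- forward direction by passing to the quotients by the cores of $\langle \s_1 \rangle$ and $\langle \s_2 \rangle$, backward direction by reassembling the group from the two quotients --- is the right one, but there is a concrete error in the setup and the genuinely hard steps are deferred with ``should.'' The error: you define $p'$ as the order of the image of $\s_1$ in the quotient by $\Core(\langle \s_2 \rangle)$. In a flat orientably regular polyhedron of type $\{p,q\}$ one has $|\langle \s_1, \s_2\rangle| = pq$ with $\langle \s_1 \rangle \cap \langle \s_2 \rangle = 1$, so killing a subgroup of $\langle \s_2 \rangle$ cannot lower the order of $\s_1$; your $p'$ equals $p$, and condition (a) would then assert $p \mid i+1$, i.e.\ $i = p-1$, which fails for, e.g., the groups $\Lambda(2b,b)_{3,1}$ used later in the paper. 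What you want is $\Core(\langle \s_1 \rangle) = \langle \s_1^{p'} \rangle$ and $\Core(\langle \s_2 \rangle) = \langle \s_2^{q'} \rangle$, i.e.\ $p'$ and $q'$ are the indices of the respective cores in $\langle \s_1 \rangle$ and $\langle \s_2 \rangle$.

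Even with that repaired, the two steps you wave at are where essentially all the work lies. Deriving $p' \mid i+1$ is not a one-line conjugation: normality of $\langle \s_1^{p'} \rangle$ says that $(\s_2^{-1}\s_1\s_2)^{p'} = (\s_1^{i}\s_2^{j+1})^{p'}$ lies in $\langle \s_1^{p'} \rangle$, and extracting $i \equiv -1 \pmod{p'}$ from this requires the normal-form calculus for words $\s_1^a \s_2^b$ in $\Lambda(p,q)_{i,j}$ (closed formulas for $\s_2^{-1}\s_1^k\s_2$, etc.); the claim $q' \mid j-1$ is analogous. You must also rule out the degenerate case $q' = 1$ (where the quotient is dihedral, not a polyhedron group) before invoking the quotient criterion. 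In the converse direction, the fiber-product idea is reasonable, but the crux is showing that under (a)--(d) there is no collapse --- that $\s_1$ really has order $p$ and $\s_2$ order $q$ in $\Lambda(p,q)_{i,j}$, that the product is generated by the diagonal involutions, and that the intersection condition holds; ``flatness follows from a simple order count'' only once the order $2pq$ has actually been established, which is the hard part. As written, the proposal is a plausible roadmap rather than a proof.
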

		
		\begin{proposition} \label{prop:flat-core-free}
		The group $\Lambda(p,q')_{i,1}$ is the automorphism group of a flat orientably regular polyhedron $\calP$
		of type $\{p, q'\}$, with $\langle \s_2 \rangle$ core-free in $\Lambda(p,q')_{i,1}$, 
		if and only if one of the following is true:
		\begin{enumerate}
		\item $q' = 2$ and $i = -1$, or
		\item $q'$ is odd, $p = 2q'$, and $i = 3$, or
		\item $q'$ is an even divisor of $p$ such that 
			\begin{itemize}
			\item $\gcd(p/q', q')$ is a power of $2$
			\item If the maximal power of $2$ that divides $p$ is $2^{\alpha}$, then the maximal power of $2$
			that divides $q'$ is either $2$, $4$, or $2^{\alpha-1}$, and it is only $4$ is $\alpha \geq 3$,
			\end{itemize}
			and $i$ satisfies a particular system of congruences. In particular, if we write $p = 2^{\alpha} p_1 p_2$,
			where $p_1$ is coprime with $q'$ and $p_2$ divides $q'$, and if $2^{\beta}$ is the maximal power of $2$
			that divides $q'$, then:
			\begin{itemize}
			\item $\frac{1-i}{2} \equiv -1$ (mod $p_2$),
			\item $\frac{1-i}{2} \equiv 1$ (mod $p_1$),
			\item If $\beta = 1$, then $\frac{1-i}{2} \equiv 1$ (mod $2^{\alpha-1}$)
			\item If $\beta = 2$, then $\frac{1-i}{2} \equiv 2^{\alpha-2}+1$ (mod $2^{\alpha-1}$)
			\item If $\beta = \alpha-1$, then $\frac{1-i}{2} \equiv 2^{\alpha-2}-1$ or $-1$ (mod $2^{\alpha-1}$)
			\end{itemize}
		\end{enumerate}
		\end{proposition}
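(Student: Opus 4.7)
The plan is to derive this refinement of \pref{flat-classification} by combining its structural constraints with a direct analysis of when $\langle \s_2 \rangle$ fails to contain a nontrivial normal subgroup of $\Lambda(p,q')_{i,1}$. The key observation is that in any flat orientably regular polyhedron of type $\{p,q'\}$, the stabilizer of a vertex in the rotation subgroup is (conjugate to) $\langle \s_2 \rangle$ and the vertex set has size $p$, so core-freeness of $\langle \s_2 \rangle$ is equivalent to faithfulness of the action of $\Lambda(p,q')_{i,1}$ on the $p$ vertices. This reframing lets me work purely in terms of a cyclic action on $\Z/p$.

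First I would unpack the defining relation $\s_2^{-1}\s_1 = \s_1^i \s_2$. Multiplying on the right by $\s_2^{-1}$ yields $\s_2^{-1}\s_1\s_2^{-1} = \s_1^i$, and iterating this identity produces explicit formulas modulo $p$ for the images of $\s_1$ under successive conjugations by powers of $\s_2$. From these formulas one reads off the normal closure of $\langle \s_1 \rangle$ inside $\Lambda(p,q')_{i,1}$ and, dually, the core of $\langle \s_2 \rangle$, as fixed-point data for a cyclic action on $\Z/p$; the core is trivial precisely when that action is faithful of order $q'$.

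Next I would dispose of the two degenerate families. When $q' = 2$, the polyhedron collapses to a dihedron and the presentation forces $i = -1$, yielding case (a). When $q'$ is odd and greater than $2$, the flatness plus intersection conditions from \pref{flat-classification} together with the relation $\s_2^{-1}\s_1\s_2^{-1} = \s_1^i$ force $p = 2q'$ and $i = 3$, yielding case (b); here case (c) cannot occur because $q'$ is odd, and core-freeness of $\langle \s_2 \rangle$ can be verified by inspection.

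The technical heart of the proof is case (c), where $q'$ is even. Writing $p = 2^{\alpha}p_1 p_2$ with $p_1$ coprime to $q'$ and $p_2$ dividing $q'$, I would decompose $\langle \s_1 \rangle$ via the Chinese Remainder Theorem into its $p_1$, $p_2$, and $2^{\alpha}$ components, on each of which $\s_2$ acts independently. On the $p_1$-component the action must be trivial, giving $\frac{1-i}{2} \equiv 1 \pmod{p_1}$; on the $p_2$-component the action must be inversion, giving $\frac{1-i}{2} \equiv -1 \pmod{p_2}$; any other behavior either introduces a nontrivial normal subgroup of $\langle \s_2 \rangle$ or breaks the intersection condition, which also forces $\gcd(p/q',q')$ to be a power of $2$. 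The main obstacle lies in the $2$-primary component, because $\mathrm{Aut}(\Z/2^{\alpha})$ is non-cyclic once $\alpha \geq 3$: the admissible cyclic actions of $\s_2$ of $2$-power order now fall into the three subcases parameterized by $\beta$, the $2$-adic valuation of $q'$. Enumerating these subcases and extracting for each one the precise congruence on $\frac{1-i}{2} \pmod{2^{\alpha-1}}$ that simultaneously produces a flat polyhedron via \pref{flat-classification}, keeps $\langle \s_2 \rangle$ core-free, and excludes every other value of $\beta$, is the most delicate step, and is what forces the three listed sub-conditions together with the bound on the maximal power of $2$ dividing $q'$.
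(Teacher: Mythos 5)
The paper itself does not prove this proposition: it is quoted as a summary of Section 4.1 of \cite{tight3}, so there is no internal argument to compare against, and your attempt at a self-contained derivation has to be judged on its own. Its correct and useful starting point is that $\langle \s_2 \rangle$ is core-free if and only if $\langle \s_2 \rangle$ acts faithfully on the $p$ vertices (the core is precisely the set of elements of $\langle \s_2 \rangle$ fixing every vertex, since the vertex stabilizer in the rotation subgroup is $\langle \s_2 \rangle$ and $\rho_2$ normalizes it). But the version you actually state --- faithfulness of the action of the full group $\Lambda(p,q')_{i,1}$ on the vertices --- is strictly stronger and false as an equivalence: in case (a), the dihedron $\{p,2\}$, the reflection $\rho_2$ fixes every vertex, yet $\langle \s_2 \rangle$ is core-free. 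The paper's own remark (core-freeness $\Leftrightarrow$ no multiple edges) is the correct combinatorial counterpart.

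The more serious problem is the machinery you build on. From $\s_2^{-1}\s_1 = \s_1^i\s_2$ one gets $\s_2^{-1}\s_1\s_2 = \s_1^i\s_2^2$, which lies in $\langle \s_1 \rangle$ only when $\s_2^2 = 1$, because $\langle \s_1 \rangle \cap \langle \s_2 \rangle = 1$ by the intersection condition. So for $q' \geq 3$ the subgroup $\langle \s_1 \rangle$ is not normal, conjugates of $\s_1$ by powers of $\s_2$ are not powers of $\s_1$, and the permutation of $\Z/p$ that $\s_2$ induces on the vertex cosets $\langle\s_2\rangle\s_1^a$ is not a group automorphism of $\Z/p$. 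Consequently the clean Chinese Remainder decomposition into independent actions on the $p_1$-, $p_2$-, and $2^{\alpha}$-components, with ``trivial'' action on one and ``inversion'' on another, is not available in the form you describe; the congruences on $\frac{1-i}{2}$ and the restriction $\beta \in \{1, 2, \alpha-1\}$ have to be extracted from the normal-form (exponent) arithmetic for words $\s_1^a\s_2^b$, which is exactly the content of \cite{tight3}, Section 4.1. Beyond this, the decisive computations are asserted rather than performed: in case (b) you state that flatness and the intersection condition ``force'' $p = 2q'$ and $i = 3$ without a derivation; in case (c) you defer the derivation of the congruences and the exclusion of other values of $\beta$ to ``the most delicate step''; and the ``if'' direction --- that each listed parameter set actually yields a non-collapsed string C-group of order $2pq'$ with $\s_1$ and $\s_2$ of the stated orders --- is not addressed at all. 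As it stands, the proposal is an outline whose central mechanism fails for every $q' \geq 3$ and whose essential verifications are missing.
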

		
		The proof of the previous proposition is contained in the contents of Section 4.1 of~\cite{tight3}. Proposition~\ref{prop:flat-core-free} also has a dual version, with the roles of $p$ and $q$ reversed and with the roles of $i$ and $-j$ reversed.  We note that for tight orientable regular polyhedra, $\langle \s_2 \rangle$ is core-free in $\Lambda(p,q')_{i,1}$ if and only if $\calP$ has no multiple edges (that is, if no pair of vertices is incident to two or more edges).

	
	\subsection{Group Actions and Permutation Representation Graphs}
	
	
	In much of this paper we will be concerned with how the automorphism group of a regular polyhedron acts on its vertices.  Here we give some of the required concepts about group actions and permutation representation graphs that we will use in our proofs.  We will follow~\cite{dixon2012} for the results on permutation groups, and~\cite{CPR} for the notation about permutation representation graphs.
	
Let $\G$ be the automorphism group of a finite regular polytope $\calP$ and let $\Omega$ be a set of faces or flags of $\calP$.  Any homomorphism $f$ of $\G$ into the symmetric group $\Sym(\Omega)$ is called a (permutation) representation of $\G$ on $\Omega$, and we will say that $\G$ acts on $\Omega$.  The kernel of $f$ is called the kernel of the representation, and the representation is called faithful when the kernel is trivial; in that case, the image of $\G$ under $f$ is isomorphic to $\G$.  

For each element $i \in \Omega$  and each element $g \in \G$, we denote the image of $i$ under $g$ by $i^g$ or $(i)g$. (The former notation is standard, but since our group elements $g$ often involve superscripts, the latter notation will be more convenient for us later.) We can extend this notion to subgroups, and denote the orbit of $i$ under $H$ as $i^H : = \{ i ^ g \mid g \in H \}$, where $H \leq \G$.  When the orbit of $i$ under $H$ is all of $\Omega$, $H$ is said to act transitively.  Additionally, we can extend this notion to subsets of $\Omega$, denoting $S^g := \{ i^g \mid i \in S\}$, where $S \subseteq \Omega$.  Finally, we say a nonempty subset $B$ of $\Omega$ is a {\em block} for $\G$ if for each $g \in \G$ either $B^g = B$ or $B^g \cap B = \emptyset$.  For each $i \in \Omega$, the set $\{i\}$ forms a trivial block for $\G$, as does $\Omega$ itself.

If $\G$ acts transitively on $\Omega$ and $B$ is a block for $\G$, then the set $\Sigma = \{ B^g \mid g \in \G \}$ is a partition of $\Omega$ and each element of the partition is a block for $\G$.    The set $\Sigma$ is called a system of blocks (or a block system) for $\G$.  The group $\G$ is said to be primitive if it has no nontrivial blocks on $\Omega$.  If there is a nontrivial partition of $\Omega$ by blocks of $\G$, then $\G$ is said to be imprimitive.  As each element of the partition will have the same size (say $k$), then $|\Omega| = km$, where there are $m$ blocks each of size $k$.  

Let $\G$ act transitively on $\Omega$. There are two common types of block system we will use:
\begin{itemize}
\item If $H$ is a normal subgroup of $\G$, then the orbits of $H$ form a block system for $\G$.
\item If $\Stab(\G,i)$ is the stabilizer of some point $i \in \Omega$, then the set of fixed points of $\Stab(\G,i)$ form a block for $\G$.
\end{itemize}
We will also need Burnside's Theorem for transitive groups of prime degree:  
\begin{theorem}[Burnside, \cite{burnside}] \label{prop:group2}
A transitive permutation group of prime degree $b$ is doubly transitive or has a normal Sylow $b$-subgroup.
\end{theorem}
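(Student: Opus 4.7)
The plan is a Sylow-theoretic dichotomy. Since $G$ acts transitively on a set $\Omega$ of $b$ points with $b$ prime, $b$ divides $|G|$ and $|G|$ divides $b!$. Because $b$ is prime, it divides $b!$ exactly once, so each Sylow $b$-subgroup $P$ of $G$ has order $b$, is cyclic, and---having an orbit of size $b$ on $\Omega$---acts regularly. If the number $n_b$ of Sylow $b$-subgroups equals $1$, then $P\trianglelefteq G$ is the desired normal subgroup and we are done. So I assume $n_b>1$ and work to show that $G$ is doubly transitive.

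In this case, $P$ is transitive and abelian, so a standard argument shows $C_G(P)=P$; the $N/C$ theorem then places $N_G(P)/P$ as a cyclic subgroup of some order $d$ dividing $b-1$ inside $\operatorname{Aut}(P) \cong \mathbb{Z}_{b-1}$. I would then analyze the permutation character, writing $\chi = 1_G + \psi$ with $\deg \psi = b-1$. Since $P$ acts regularly, $\chi|_P$ is the regular character of $P$, so $\psi|_P$ is the sum of all $b-1$ nontrivial linear characters of $P$. Because $\mathbb{Z}_d \le \operatorname{Aut}(P)$ acts freely on these characters, Clifford theory applied to $P \trianglelefteq N_G(P)$, followed by Frobenius reciprocity to pass to $G$, decomposes $\psi$ into $(b-1)/d$ distinct irreducible constituents of common degree $d$. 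Hence $\psi$ is irreducible---equivalently, $G$ is doubly transitive---precisely when $d = b-1$.

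The main obstacle is therefore to show that $n_b > 1$ forces $d = b-1$. The boundary case $d = 1$ is immediate from Burnside's normal $p$-complement theorem: if $N_G(P) = C_G(P)$, then $G$ has a normal $b$-complement $K$ with $G = K \rtimes P$, making $P$ normal and contradicting $n_b > 1$. The intermediate range $1 < d < b-1$ is the heart of Burnside's original argument and the step I expect to be the hardest. The standard route exploits the primitivity of $G$ on $\Omega$ (automatic since $b$ is prime) together with a Schur-ring analysis of the centralizer algebra of $\chi$: any proper reducibility of $\psi$ produces a nontrivial $G$-invariant binary relation on $\Omega$, and one shows that for a primitive group of prime degree, no such proper relation can exist unless a Sylow $b$-subgroup is normal, completing the proof.
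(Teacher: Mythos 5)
This statement is quoted in the paper as a classical theorem of Burnside and is not proved there; it is simply cited, so your attempt has to stand on its own. Your setup is fine: the Sylow $b$-subgroup $P$ is cyclic of order $b$ and regular, hence self-centralizing, $N_G(P)/P$ is cyclic of order $d \mid b-1$, the permutation character restricts to the regular character of $P$, and ``doubly transitive'' is equivalent to ``$\psi$ irreducible.'' But the central claim of your second paragraph is false: it is not true that $\psi$ decomposes into $(b-1)/d$ irreducible constituents of common degree $d$, nor that $G$ is doubly transitive precisely when $d=b-1$. Take $G=\mathrm{PSL}(2,7)$ acting on the $7$ points of the projective line: here $n_7=8$, $|N_G(P)|=21$, so $d=3$, yet $G$ is doubly transitive and $\psi$ is irreducible of degree $6$, not a sum of two irreducibles of degree $3$. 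The error is that Clifford theory controls restriction to a \emph{normal} subgroup, and $N_G(P)$ is not normal in $G$; Frobenius reciprocity does not transport the decomposition of $\mathrm{Ind}_P^{N_G(P)}\lambda$ to a decomposition of $\psi$. In Burnside's actual argument the group acting on the nontrivial linear characters of $P$ is the Galois group $\mathrm{Gal}(\mathbb{Q}(\zeta_b)/\mathbb{Q})\cong\mathbb{Z}_{b-1}$ (which permutes the constituents of the rational character $\psi$ compatibly), and what one gets is that all constituents have a common degree $e$ with $e\mid b-1$ --- and $e$ need not equal $d$.

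Even setting that aside, the heart of the theorem is never proved: your last sentence defers everything to ``one shows that for a primitive group of prime degree, no such proper relation can exist unless a Sylow $b$-subgroup is normal,'' which is a restatement of the theorem itself, not an argument. (This is exactly the step where Burnside's character-theoretic computation, or Schur's analysis of Schur rings over $\mathbb{Z}_b$, does real work.) A smaller issue: in the case $d=1$, ``$G=K\rtimes P$, making $P$ normal'' is a non sequitur --- a normal $b$-complement makes $K$ normal, not $P$. The conclusion is still reachable: the orbits of the normal subgroup $K$ are blocks of size dividing $b$, and $b\nmid |K|$ forces $K$ to act trivially, hence $K=1$ by faithfulness and $G=P$. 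As it stands, the proposal is a plan containing one false intermediate claim and one unproved step that carries the entire content of the theorem.
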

	
Given a regular polyhedron $\calP$, and thus a string C-group $\G(\calP)$, when $\G(\calP)$ acts faithfully on a set $\Omega$, we can form a CPR graph for this representation, which stands for ``C-group Permutation Representation" graph.  This concept extends naturally to any group generated by involutions, and is defined as follows.  Let $f$ be an embedding of $\G(\calP) = \langle \rho_0, \rho_1, \rho_2 \rangle$ into the symmetric group $\Sym(\Omega)$.  The {\em CPR graph} $X$ of $\calP$ given by $f$ is a 3-edge-labeled multigraph with nodes $\Omega$ such that for any $i,j \in \Omega$ with $i \neq j$, there is a single $ij$ edge of $X$ of label $k$ if and only if the image of $\rho_k$ under $f$ sends $i$ to $j$.   When $\G(\calP)$ acts faithfully on its vertex set, we can thus construct a {\em vertex CPR graph} for the regular polyhedron $\calP$, where the vertices of the graph $X$ are themselves vertices of $\calP$.  

Given a CPR graph $X$, and any subset $I$ of $\{0,1,2\}$, we can construct the subgraph $X_I$ which denotes the spanning subgraph of $X$, including all the vertices of $X$, whose edge set consists of the edges with labels $k \in I$.   Much is known about the structure of these subgraphs $X_I$.  For instance, if $I = \{ 0 ,2 \}$ then every connected component of $X_I$ is either a single vertex, a single edge, a double edge, or an alternating square.   The type of the polyhedron also can easily be determined from the CPR graph.  Let $X$ be a CPR graph for a regular polyhedron of type $\{p,q\}$.  Then, if $I = \{0,1\}$ (or dually $\{1,2\}$) then the connected components of $X_I$ are single vertices, double edges, alternating paths, or alternating cycles, and $p$ (or dually $q$) is the least common multiple of the number of vertices in each alternating path and half the number of vertices in each alternating cycle.

	
	\subsection{Vertex-faithful polyhedra}


	We say that a regular polyhedron is \emph{vertex-faithful} if its automorphism group acts faithfully
	on its vertices; in other words, if the only automorphism that fixes every vertex is
	the identity. We note that if a polyhedron is \emph{vertex-describable}, meaning that each face
	is completely determined by its vertex-set, then it must also be vertex-faithful. This is because,
	if an automorphism fixes every vertex of a vertex-describable polyhedron, it must fix every face and
	thus must be the identity (since the automorphism group of a polyhedron acts freely on the flags). 
	The converse is not true; the hemi-cube $\{4,3\}_3$ is an example of a vertex-faithful
	polyhedron that is not vertex-describable.
	
	If $\calP$ is a regular polyhedron with $\G(\calP) = \langle \rho_0, \rho_1, \rho_2 \rangle$ and with base vertex
	$u$, then the stabilizer of $u$ is $\langle \rho_1, \rho_2 \rangle$, and the stabilizer of an arbitrary
	vertex $u \varphi$ is $\varphi^{-1} \langle \rho_1, \rho_2 \rangle \varphi$, where $\varphi \in \G(\calP)$. Then the kernel of the
	action of $\G(\calP)$ on the vertices is
	\[ \bigcap_{\varphi \in \G(\calP)} \varphi^{-1} \langle \rho_1, \rho_2 \rangle \varphi, \]
	which is the normal core of $\langle \rho_1, \rho_2 \rangle$ in $\G(\calP)$ (also called simply
	the core of $\langle \rho_1, \rho_2 \rangle$ in $\G(\calP)$). Thus, $\calP$ is
	vertex-faithful if and only if $\langle \rho_1, \rho_2 \rangle$ is core-free in $\G(\calP)$.
	
	In order to analyze regular polyhedra with a fixed number of vertices, we will treat flat and non-flat polyhedra
	separately. Proposition~\ref{prop:fixes-verts} below gives one of the main reasons why.
	
	\begin{proposition}[Proposition 2.2 of \cite{tight3}]
	\label{prop:simple-quo}
	Let $\G = \langle \rho_0, \rho_1, \rho_2 \rangle$ be a string C-group. Let $N = \langle (\rho_1 \rho_2)^k \rangle$ for some $k \ge 2$. If $N$ is normal in $\G$, then $\G/N$ is a string C-group.
\end{proposition}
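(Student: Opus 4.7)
The plan is to show that the images $\bar\rho_0, \bar\rho_1, \bar\rho_2$ of the generators under the quotient map $\G \to \G/N$ satisfy the defining properties of a rank-$3$ string C-group: each is a nontrivial involution, $(\bar\rho_0\bar\rho_2)^2 = 1$, and the intersection condition
\[
\langle \bar\rho_0, \bar\rho_1 \rangle \cap \langle \bar\rho_1, \bar\rho_2 \rangle = \langle \bar\rho_1 \rangle
\]
holds. Every relation satisfied in $\G$ automatically descends to the quotient, so the involution and commutation conditions are immediate; the real content is the intersection condition, together with a short check that none of the generators collapses to the identity.

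The key structural observation is that $N = \langle (\rho_1\rho_2)^k \rangle$ is generated by a power of $\rho_1\rho_2$ and therefore lies inside $\langle \rho_1, \rho_2 \rangle$. Suppose $\bar g \in \langle \bar\rho_0, \bar\rho_1 \rangle \cap \langle \bar\rho_1, \bar\rho_2 \rangle$, and lift $\bar g$ to elements $g_1 \in \langle \rho_0, \rho_1 \rangle$ and $g_2 \in \langle \rho_1, \rho_2 \rangle$ with $g_1 N = g_2 N$. Then $g_1 = g_2 n$ for some $n \in N \subseteq \langle \rho_1, \rho_2 \rangle$, so $g_1$ itself lies in $\langle \rho_1, \rho_2 \rangle$. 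Combining this with $g_1 \in \langle \rho_0, \rho_1 \rangle$ and invoking the intersection condition that already holds in $\G$ gives $g_1 \in \langle \rho_1 \rangle$, whence $\bar g \in \langle \bar\rho_1 \rangle$. The reverse inclusion is trivial.

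For nondegeneracy I would argue briefly that no $\bar\rho_i$ becomes trivial: if $\bar\rho_0 = 1$ then $\rho_0 \in N \subseteq \langle \rho_1, \rho_2 \rangle$, so the intersection condition in $\G$ forces $\rho_0 \in \langle \rho_1 \rangle$, contradicting that $\G$ itself is a rank-$3$ string C-group; the same argument rules out $\bar\rho_2 = 1$, and $\bar\rho_1 = 1$ would make $\rho_1$ a power of $\rho_1\rho_2$, which quickly collapses $\langle \rho_1, \rho_2 \rangle$ in a way incompatible with the original nondegeneracy.

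I do not expect a serious obstacle: the proposition is essentially a one-line consequence of the original intersection condition, made possible by the fortuitous inclusion $N \subseteq \langle \rho_1, \rho_2 \rangle$. The hypothesis $k \ge 2$ plays no role in the argument itself and only ensures that $N$ gives a genuine (proper) quotient in the intended applications.
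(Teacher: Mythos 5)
The paper never proves this proposition: it is imported verbatim from \cite{tight3}, so there is no internal proof to compare against. Your core argument is the standard quotient criterion (essentially 2E17 of \cite{ARP} in rank $3$), and it is correct: since $N \leq \langle \rho_1, \rho_2 \rangle$, an element of $\langle \bar\rho_0, \bar\rho_1 \rangle \cap \langle \bar\rho_1, \bar\rho_2 \rangle$ lifts to some $g_1 \in \langle \rho_0, \rho_1 \rangle$ that also lies in $\langle \rho_1, \rho_2 \rangle$, and the intersection condition upstairs puts $g_1$ in $\langle \rho_1 \rangle$. That part is complete.

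The gap is in the non-degeneracy check. Reducing the full intersection property in rank $3$ to the single displayed equation (Prop.\ 2E16 of \cite{ARP}) requires that $\langle \bar\rho_0, \bar\rho_1 \rangle$ and $\langle \bar\rho_1, \bar\rho_2 \rangle$ be rank-$2$ C-groups, i.e.\ that adjacent generators stay \emph{distinct} in the quotient, not merely nontrivial. For $\bar\rho_0 \neq \bar\rho_1$ your own argument adapts ($\rho_0\rho_1 \in N$ would force $\rho_0\rho_1 \in \langle \rho_1 \rangle$). But $\bar\rho_1 = \bar\rho_2$ is equivalent to $\rho_1\rho_2 \in \langle (\rho_1\rho_2)^k \rangle$, which happens exactly when $\gcd(k,q) = 1$ (where $q$ is the order of $\rho_1\rho_2$), and this is \emph{not} excluded by $k \geq 2$. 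Concretely, take $\G = [2,3]$ and $k = 2$: then $N = \langle (\rho_1\rho_2)^2 \rangle = \langle \rho_1\rho_2 \rangle$ is normal, and in $\G/N$ one has $\bar\rho_1 = \bar\rho_2$, so $\langle \bar\rho_1 \rangle \cap \langle \bar\rho_2 \rangle \neq \{1\}$ and the quotient is not a rank-$3$ string C-group. So the statement needs $\gcd(k,q) > 1$ (in the paper's only application, Proposition~\ref{prop:fixes-verts}, one has $k = q'$ with $q' \mid q$ and $q' \geq 3$, so no harm is done). Your closing remark that ``$k \geq 2$ plays no role'' is therefore backwards: that hypothesis is there precisely to keep $\rho_1\rho_2$ out of $N$, and on its own it does not quite succeed.
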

	
	\begin{proposition}
	\label{prop:fixes-verts}
	Suppose that $\calP$ is a non-flat regular polyhedron of type $\{p, q\}$ with $v$ vertices, where $v$ is finite. Let $N$ be the normal core of
	$\langle \rho_1, \rho_2 \rangle$ in $\G(\calP)$. Then:
	\begin{enumerate}
	\item $N = \langle (\rho_1 \rho_2)^{q'} \rangle$ for some $q' \geq 3$ where $q'$ divides $q$.
	\item $\G(\calP)/N$ is the automorphism group of a non-flat, vertex-faithful regular polyhedron of type $\{p, q'\}$
	with $v$ vertices.
	\end{enumerate}
	\end{proposition}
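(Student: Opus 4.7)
The plan is to pin down the structure of $N$ inside the dihedral subgroup $\langle \rho_1, \rho_2 \rangle$ and then invoke Proposition~\ref{prop:simple-quo}. The central claim for part (1) is that $N$ lies inside the cyclic rotation subgroup $\langle \rho_1 \rho_2 \rangle$, which immediately gives $N = \langle (\rho_1 \rho_2)^{q'} \rangle$ for some divisor $q'$ of $q$; the small cases $q' \in \{1,2\}$ can then be excluded using the intersection condition and the non-flatness hypothesis. Once $q' \geq 3$ is established, part (2) follows from Proposition~\ref{prop:simple-quo} together with routine coset counting.

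First I would dispose of two easy endpoints. For $\rho_1 \notin N$: otherwise $\rho_0 \rho_1 \rho_0 \in N \subseteq \langle \rho_1, \rho_2 \rangle$, and combining with $\rho_0 \rho_1 \rho_0 \in \langle \rho_0, \rho_1 \rangle$ the intersection condition forces $\rho_0 \rho_1 \rho_0 \in \langle \rho_1 \rangle$, so $\rho_0$ and $\rho_1$ commute, contradicting $p \geq 3$. For the implication $\rho_2 \in N \Rightarrow \calP$ flat: for any $g \in \G(\calP)$, $g \rho_2 = (g \rho_2 g^{-1}) g$ with $g \rho_2 g^{-1} \in \langle \rho_1, \rho_2 \rangle$, so an induction on word length in the generators $\rho_0, \rho_1, \rho_2$ factorizes every element of $\G(\calP)$ as $ab$ with $a \in \langle \rho_1, \rho_2 \rangle$ and $b \in \langle \rho_0, \rho_1 \rangle$. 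Hence $\G(\calP) = \langle \rho_1, \rho_2 \rangle \langle \rho_0, \rho_1 \rangle$, which is equivalent to every vertex lying on the base face, i.e., $\calP$ is flat. Non-flatness therefore yields $\rho_2 \notin N$.

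The main obstacle is ruling out any reflection of $\langle \rho_1, \rho_2 \rangle$ from belonging to $N$. I plan a short computation inside the dihedral group: writing a hypothetical reflection $w \in N$ as $w = (\rho_1 \rho_2)^k \rho_1$, the products $w \cdot (\rho_1 w \rho_1) = (\rho_1 \rho_2)^{2k}$ and $w \cdot (\rho_2 w \rho_2) = (\rho_1 \rho_2)^{2k+2}$ both lie in $N$, so $(\rho_1 \rho_2)^2 \in N$ and hence $\langle (\rho_1 \rho_2)^2 \rangle \subseteq N$. A parity check forces $k$ to be odd, for otherwise $(\rho_1 \rho_2)^{-k} w = \rho_1$ would be in $N$. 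With $k$ odd, $(\rho_1 \rho_2)^{-(k-1)} w = \rho_1 \rho_2 \rho_1 \in N$, and conjugating by $\rho_1$ yields $\rho_2 \in N$, putting us back in the previously excluded (flat) case. Thus $N \subseteq \langle \rho_1 \rho_2 \rangle$, so $N = \langle (\rho_1 \rho_2)^{q'} \rangle$ for some $q' \mid q$.

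To finish part (1) I would rule out $q' \in \{1, 2\}$. For $q' = 1$: $\rho_1 \rho_2 \in N$ forces $\rho_0 \rho_1 \rho_0 \cdot \rho_2 = \rho_0(\rho_1 \rho_2) \rho_0 \in \langle \rho_1, \rho_2 \rangle$, so $\rho_0 \rho_1 \rho_0 \in \langle \rho_1 \rangle$, again contradicting $p \geq 3$. For $q' = 2$: Proposition~\ref{prop:simple-quo} produces a polyhedron quotient of type $\{p, 2\}$, but the universal regular polyhedron of this type is the dihedron with exactly $p$ vertices, so the quotient (and thus $\calP$) would have $v = p$ vertices, contradicting non-flatness. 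This gives $q' \geq 3$. For part (2), Proposition~\ref{prop:simple-quo} makes $\G(\calP)/N$ a string C-group; the image of $\rho_0 \rho_1$ still has order $p$ because $N \cap \langle \rho_0, \rho_1 \rangle \subseteq \langle \rho_1 \rho_2 \rangle \cap \langle \rho_0, \rho_1 \rangle = \{1\}$ by the intersection condition, so the quotient has type $\{p, q'\}$; the coset count $|\G(\calP)/N| / |\langle \bar{\rho}_1, \bar{\rho}_2 \rangle| = v$ gives the vertex count; vertex-faithfulness is immediate since $N$ was constructed as the kernel of the vertex action; and non-flatness of the quotient is automatic because it shares the face-size $p$ and vertex count $v$ with $\calP$.
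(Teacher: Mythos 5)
Your proof is correct, and its overall skeleton is the same as the paper's: show that $N$ contains no reflections by reducing to the cases $\rho_1 \in N$ and $\rho_2 \in N$ and deriving flatness from each, conclude $N = \langle (\rho_1\rho_2)^{q'}\rangle$, apply Proposition~\ref{prop:simple-quo}, and eliminate small $q'$ because a type $\{p,2\}$ quotient would force $v = p$. The one place you genuinely diverge is the case $\rho_2 \in N$: the paper squeezes the index, showing $2p \leq 2v \leq [\G(\calP):N] \leq 2p$ (the last inequality because $\G(\calP)/N$ is then a quotient of the dihedral group $\langle \rho_0,\rho_1\rangle$), whereas you prove the factorization $\G(\calP) = \langle\rho_1,\rho_2\rangle\langle\rho_0,\rho_1\rangle$ by induction on word length and invoke the standard fact that this product decomposition characterizes flatness; both are valid, and yours is arguably more transparent about \emph{why} flatness appears, at the cost of quoting a characterization the paper never states explicitly. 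Two smaller points in your favor: you handle $q' = 1$ explicitly (the paper leaves it implicit, and Proposition~\ref{prop:simple-quo} formally requires $k \geq 2$), and you verify that the order of $\rho_0\rho_1$ survives the quotient via $N \cap \langle\rho_0,\rho_1\rangle = \{1\}$, which the paper asserts without comment. Your direct dihedral computation eliminating reflections (producing $(\rho_1\rho_2)^{2k}$ and $(\rho_1\rho_2)^{2k+2}$ in $N$) is just an unpacked version of the paper's observation that every reflection in $\langle\rho_1,\rho_2\rangle$ is conjugate to $\rho_1$ or $\rho_2$.
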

	
	\begin{proof}
	Let $\varphi \in N$. If $\varphi$ is a reflection (that is, the product of an odd number
	of generators), then $\varphi$ is conjugate to either $\rho_1$ or $\rho_2$. Since $N$ is normal, that implies
	that $\rho_1$ or $\rho_2$ is in $N$ and thus fixes all vertices. If $\rho_1 \in N$, then $\rho_0 \rho_1 \rho_0 \in N$,
	and by the intersection condition, $\rho_0 \rho_1 \rho_0 \in \langle \rho_1 \rangle$. This forces $(\rho_0 \rho_1)^2 = 1$,
	and so $p = 2$, which would make $\calP$ flat. So, suppose $\rho_2 \in N$. Then $\G(\calP) / N$ is a quotient
	of the dihedral group $\langle \rho_0, \rho_1 \rangle$, and so $[\G(\calP) : N] \leq 2p$. (Note that $p$ must be
	finite since $v$ is finite.) Now, since $N$ is a subgroup of a dihedral group of order 
	$2q$ and does not contain $\rho_1$, the index of $N$ in $\langle \rho_1, \rho_2 \rangle$ is at least 2.
	Furthermore, the index of $\langle \rho_1, \rho_2 \rangle$ in $\G(\calP)$ is equal to $v$ which is
	at least as large as $p$. So
	\[ 2p \leq 2v \leq [\G(\calP) : \langle \rho_1, \rho_2 \rangle] [\langle \rho_1, \rho_2 \rangle : N] = [\G(\calP) : N] \leq 2p. \]
	So in this case we get $v = p$, meaning that $\calP$ is flat. So when $\calP$ is non-flat, neither $\rho_1$
	nor $\rho_2$ can lie in $N$. Thus $N$ cannot contain any
	reflections, and so $N = \langle (\rho_1 \rho_2)^{q'} \rangle$ for some $q'$ dividing $q$. 

	Next, Proposition~\ref{prop:simple-quo} shows that $\G(\calP)/N$ is the automorphism group of a regular
	polyhedron $\calQ$, which will have type $\{p, q'\}$. 
	Clearly the image of $\langle \rho_1, \rho_2 \rangle$ will be core-free, so that $\calQ$ is
	vertex-faithful. Furthermore, since $N$ was the kernel of the action on the vertices, $\calQ$
	will have the same number of vertices as $\calP$. Since $\calP$ is not flat, that implies that $v > p$,
	and thus $\calQ$ is also not flat. Finally, this implies that $q' \geq 3$, since if $q' = 2$ then $\calQ$
	has type $\{p, 2\}$ which would imply that it is flat.
	\end{proof}
	
	\begin{corollary}
	\label{cor:no-vert-faithful}
	If there are no non-flat, vertex-faithful regular polyhedra with $v$ vertices,
	then there are no non-flat regular polyhedra whatsoever with $v$ vertices.
	\end{corollary}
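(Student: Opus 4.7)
The plan is to prove the statement by contrapositive, as an essentially immediate application of \pref{fixes-verts}. Assume, for contradiction, that there exists a non-flat regular polyhedron $\calP$ with exactly $v$ vertices; I would then produce a non-flat vertex-faithful regular polyhedron on the same number of vertices, contradicting the hypothesis of the corollary.

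The construction is already packaged in \pref{fixes-verts}. Let $\calP$ have type $\{p,q\}$, with $\G(\calP) = \langle \rho_0, \rho_1, \rho_2\rangle$, and let $N$ denote the normal core of $\langle \rho_1, \rho_2\rangle$ in $\G(\calP)$. Then \pref{fixes-verts} tells us that $N = \langle (\rho_1 \rho_2)^{q'}\rangle$ for some $q' \geq 3$ dividing $q$, and that $\G(\calP)/N$ is the automorphism group of a non-flat, vertex-faithful regular polyhedron $\calQ$ of type $\{p, q'\}$. Moreover, since $N$ is exactly the kernel of the action of $\G(\calP)$ on the vertex set of $\calP$, the quotient $\G(\calP)/N$ acts faithfully on a set of the same size, so $\calQ$ has the same $v$ vertices as $\calP$. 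This $\calQ$ contradicts the assumption that no non-flat, vertex-faithful regular polyhedron has $v$ vertices, completing the proof.

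There is essentially no obstacle here: the entire content has already been extracted in \pref{fixes-verts}, and the corollary is just its contrapositive repackaged in a form convenient for the later sections, where we will want to reduce the general classification problem for $v$ vertices to the vertex-faithful case plus a separate treatment of flat polyhedra.
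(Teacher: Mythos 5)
Your proof is correct and is exactly the argument the paper intends: the corollary is an immediate consequence of Proposition~\ref{prop:fixes-verts}, which hands you a non-flat, vertex-faithful regular polyhedron with the same number of vertices from any non-flat one. No issues.
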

	
	We will see later that there is an infinite family of flat regular polyhedra with no vertex-faithful quotients, so the assumption that $\calP$ was non-flat was essential.
	
Given a vertex-faithful regular polyhedron $\calQ$, there may be infinitely many regular polyhedra $\calP$ that cover
$\calQ$ and have the same number of vertices. However, it is sometimes possible to bound the size of $\calP$.

\begin{proposition} \label{prop:vf-quos}
Suppose that $\calQ$ is a vertex-faithful regular polyhedron of type $\{p, q'\}$ and that
$\calP$ is a regular polyhedron of type $\{p, q\}$ that properly covers $\calQ$ and has the same number of vertices.
Let $\psi: \G(\calP) \to \G(\calQ)$ be the canonical covering that sends generators to generators.
\begin{enumerate}
\item If $\calQ$ is non-orientable, then $\calP$ is non-orientable.
\item In $\G(\calP)$, $\rho_0 \s_2^{q'} \rho_0 = \s_2^{aq'}$ for some $a$ satisfying $a^2 \equiv 1$ (mod $q/q'$).
\item If $\alpha$ is a word in $\G(\calP)$ such that $\alpha \psi = 1$, then $\alpha$ commutes with $\s_2^{q'}$.
\item Suppose that $\alpha \in \G(\calP)$ such that $\alpha \psi = 1$. Let $s$ be the number of occurrences (mod $2$)
of $\rho_0$ in $\alpha$, and let $t$ be the total number of occurrences (mod $2$) of $\rho_1$ and $\rho_2$ in $\alpha$.
Then $\s_2^{q'} = \s_2^{a^s (-1)^t q'}$. In particular, if $\alpha$ has odd length and $\rho_0$ occurs an even number
of times, then $q = 2q'$, and if $p$ is odd, then $\rho_0$ inverts $\s_2^{q'}$.
\end{enumerate}
\end{proposition}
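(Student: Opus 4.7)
The plan is to first pin down $\ker\psi$ explicitly, after which parts (a)--(d) reduce to computations on a cyclic normal subgroup. Since $\calQ$ is vertex-faithful and has the same vertex set as $\calP$, the kernel of $\psi$ is exactly the core of the vertex-stabilizer $\langle \rho_1,\rho_2\rangle$ in $\G(\calP)$. Repeating the argument from the proof of \pref{fixes-verts}, this core contains no reflections (since a reflection in $N$ would force $\rho_1$ or $\rho_2$ itself into $\ker\psi$, collapsing a generator of $\G(\calQ)$), and hence equals $\langle \s_2^{q'}\rangle$, a cyclic group of order $q/q'$.

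For (a), suppose $\calQ$ is non-orientable, so some word $w$ of odd length represents the identity in $\G(\calQ)$. Its lift to $\G(\calP)$ lies in $\ker\psi$ and so equals some $\s_2^{kq'} = (\rho_1\rho_2)^{kq'}$, which has even length; multiplying by the inverse of this even-length expression yields an odd-length trivial word in $\G(\calP)$, so $\calP$ is non-orientable. For (b), normality of $\ker\psi$ forces $\rho_0\s_2^{q'}\rho_0 = \s_2^{aq'}$ for some integer $a$; conjugating twice produces $\s_2^{q'} = \s_2^{a^2 q'}$, hence $a^2 \equiv 1 \pmod{q/q'}$. Part (c) is immediate, as $\alpha\in\ker\psi=\langle\s_2^{q'}\rangle$ is a power of $\s_2^{q'}$.

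The core step is (d). Conjugation by each generator $\rho_i$ acts on the cyclic group $\langle\s_2^{q'}\rangle$ by an automorphism, and because the automorphism group of a cyclic group is abelian, composite actions depend only on the parities of the generator counts. By (b), $\rho_0$ acts as multiplication by $a$; and the computations $\rho_1\s_2\rho_1 = \rho_2\rho_1 = \s_2^{-1}$ and $\rho_2\s_2\rho_2 = \rho_2\rho_1 = \s_2^{-1}$ show that $\rho_1$ and $\rho_2$ each invert $\s_2^{q'}$. Hence, writing $\alpha$ as a word with $s$ occurrences (mod $2$) of $\rho_0$ and $t$ occurrences (mod $2$) of $\rho_1,\rho_2$ combined, conjugation by $\alpha$ sends $\s_2^{q'}$ to $\s_2^{a^s(-1)^t q'}$; combined with (c), this yields $\s_2^{q'} = \s_2^{a^s(-1)^t q'}$.

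The ``in particular'' clauses then follow by plugging in specific $\alpha$. If $\alpha$ has odd length with an even number of $\rho_0$s, then $s = 0$ and $t = 1$, so $\s_2^{2q'} = 1$; since $\calP$ properly covers $\calQ$, we have $q/q' > 1$, forcing $q = 2q'$. For the odd-$p$ case, apply (d) to the identity $\alpha = (\rho_0\rho_1)^p = 1$: both $\rho_0$ and $\rho_1$ occur $p$ times, so $s = t = 1$, yielding $a \equiv -1 \pmod{q/q'}$, which is exactly the statement that $\rho_0$ inverts $\s_2^{q'}$. The main subtle point is justifying the clean identification $\ker\psi = \langle\s_2^{q'}\rangle$; once that is in place, the remainder is a parity bookkeeping exercise in the abelian automorphism group of a cyclic group.
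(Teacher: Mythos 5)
Your proof is correct and follows essentially the same route as the paper's: identify $\ker\psi=\langle\s_2^{q'}\rangle$, then use normality and the inversion of $\s_2$ by $\rho_1,\rho_2$ to track conjugation letter-by-letter. You actually supply slightly more detail than the paper does (the justification that $\ker\psi$ is the core of $\langle\rho_1,\rho_2\rangle$ and the explicit parity bookkeeping in (d)), all of it sound.
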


\begin{proof}
Let $N = \ker \psi = \langle \s_2^{q'} \rangle$.
If $\calQ$ is non-orientable, that means that there is a word $\alpha$ of odd length in $\G(\calP)$ such that
$\alpha \in N$. But the elements of $N$ all have even length, and so $\calP$ is non-orientable. That proves part (a).

For part (b), the first part is obvious since $N$ is normal. Then since $\rho_2$ inverts $\s_2$,
\[ \s_2^{q'} = (\rho_0 \rho_2)^2 \s_2^{q'} (\rho_2 \rho_0)^2 = \s_2^{a^2 q'}, \]
proving the second part.

Part (c) is clear since in this case $\alpha \in N$.

To prove part (d), first recall that $\rho_1$ and $\rho_2$ both invert $\s_2^{q'}$. The result then follows immediately
from parts (b) and (c) considering the equation $\alpha^{-1} \s_2^{q'} \alpha = \s_2^{q'}$ and expanding the left-hand
side.
\end{proof}

We can say more about the covering of vertex-faithful regular polyhedra depending on the lengths of their $j-${\em holes} and $j-${\em zigzags} (see Section 7B of~\cite{ARP} for a combinatorial description).  Algebraically, the length of a 1-hole, $p$, is the order of the element $\rho_0 \rho_1$, where the length of a 2-hole is the order of the element $h=\rho_0 \rho_1 \rho_2 \rho_1$.  The length of a 1-zigzag is the size of the Petrie polygon of the map, which is the order of the element $z_1=\rho_0 \rho_1 \rho_2$.  Finally, the order of the element $z_2=\rho_0 \rho_1 \rho_2 \rho_1 \rho_2$ gives the length of the 2-zigzags of $\calP$.  

\begin{corollary} \label{cor:odd-zigzags}
Suppose that $\calQ$ is a vertex-faithful regular polyhedron of type $\{p, q'\}$ and that
$\calP$ is a regular polyhedron of type $\{p, q\}$ that properly covers $\calQ$ and has the same number of vertices.
Let $h$, $z_1$, and $z_2$ be the elements of $\G(\calQ)$ as described above. 
If either $p$ or $|h|$ is odd and either $|z_1|$ or $|z_2|$ is odd, then $q = 2q'$.
In particular, $|\G(\calP)| = 2|\G(\calQ)|$.
\end{corollary}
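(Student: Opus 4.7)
The plan is to apply part~(d) of Proposition~\ref{prop:vf-quos}. That part says: any word $\alpha$ in the generators $\rho_0,\rho_1,\rho_2$ of $\G(\calP)$ that represents an element of $N = \ker\psi = \langle \s_2^{q'}\rangle$ and simultaneously has odd total length with $\rho_0$ occurring an even number of times forces $q = 2q'$. So the whole task is to produce such a word under the hypotheses of the corollary.

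I will build $\alpha$ as a product $\alpha = w_1 w_2$, splitting the parity requirements so that $w_1$ has even length and odd $\rho_0$-count while $w_2$ has odd length and odd $\rho_0$-count; the sum then has odd length and even $\rho_0$-count. Both factors will come from the natural power-words arising from the orders of distinguished elements. For $w_1$: if $p$ is odd take $w_1 = (\rho_0\rho_1)^p$, which is already trivial in $\G(\calP)$ (hence in $N$) and has length $2p$ with $\rho_0$-count $p$; otherwise $|h|$ is odd, in which case take $w_1 = h^{|h|} = (\rho_0\rho_1\rho_2\rho_1)^{|h|}$, which lies in $N$ because $h$ has order $|h|$ in $\G(\calQ)$ and has length $4|h|$ with $\rho_0$-count $|h|$. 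For $w_2$: if $|z_1|$ is odd take $w_2 = z_1^{|z_1|} = (\rho_0\rho_1\rho_2)^{|z_1|}$ of length $3|z_1|$ and $\rho_0$-count $|z_1|$; otherwise $|z_2|$ is odd, so take $w_2 = z_2^{|z_2|} = (\rho_0\rho_1\rho_2\rho_1\rho_2)^{|z_2|}$ of length $5|z_2|$ and $\rho_0$-count $|z_2|$. In each case $w_2 \in N$ since $z_1, z_2$ have the stated orders in $\G(\calQ)$.

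Under the hypotheses, at least one admissible choice for each of $w_1$ and $w_2$ is available, and the concatenation $\alpha = w_1 w_2$ is then a word representing an element of $N$ with odd length and even $\rho_0$-count. Proposition~\ref{prop:vf-quos}(d) then yields $q = 2q'$. The final size statement is immediate: writing $v$ for the common vertex count, $|\G(\calP)| = 2vq = 2v(2q') = 2|\G(\calQ)|$.

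I do not anticipate a real obstacle: the essential observation is that the parities of $p$, $|h|$, $|z_1|$, $|z_2|$ directly control the parities of the length and the $\rho_0$-count of the corresponding power-words, so the hypothesis on oddness was tailored precisely to let us engineer the parity profile that Proposition~\ref{prop:vf-quos}(d) demands. The only delicate point to check in the writeup is that the power-words genuinely lie in $N$, which boils down to $h, z_1, z_2$ having the stated orders in $\G(\calQ)$ by the combinatorial definitions of holes and zigzags.
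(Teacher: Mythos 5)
Your proposal is correct and follows essentially the same route as the paper: both rest on Proposition~\ref{prop:vf-quos}(d) applied to the relator words $(\rho_0\rho_1)^p$, $h^{|h|}$, $z_1^{|z_1|}$, $z_2^{|z_2|}$, which lie in $N=\ker\psi$, the only difference being that you concatenate one word of each kind to get a single word of odd length with even $\rho_0$-count, whereas the paper applies the parity formula to each word separately to conclude that $\rho_0$ both commutes with and inverts $\s_2^{q'}$, forcing $\s_2^{q'}=\s_2^{-q'}$. The parity bookkeeping and the final computation $|\G(\calP)|=2qv=4q'v=2|\G(\calQ)|$ are all correct.
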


\begin{proof}
In this case, part (d) says that $\rho_0$ commutes with $\s_2^{q'}$ and inverts $\s_2^{q'}$.
It follows that $\s_2^{-q'} = \s_2^{q'}$, and so $q = 2q'$. Then $|\G(\calP)| = 2qv = 4q'v = 2|\G(\calQ)|$.
\end{proof}

	The following proposition is an adaptation of \cite[Cor. 13]{maps-few-faces}.

	\begin{proposition} \label{prop:steves-thm}
	Let $\calP$ be a regular polyhedron of type $\{p, q\}$. For every $q' < q/2$ such that $q'$ divides $q$, the
	number of vertices fixed by $(\rho_1 \rho_2)^{q'}$ is a divisor of the total number of vertices.
	\end{proposition}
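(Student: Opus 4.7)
The plan is to recast the fixed-vertex count as the index of a subgroup, after which Lagrange's theorem finishes the job. Write $G = \G(\calP)$, $H = \langle \rho_1, \rho_2 \rangle$, and $g = (\rho_1 \rho_2)^{q'}$. As recalled earlier in the paper, vertices of $\calP$ correspond to right cosets $H\varphi$, and the stabilizer of $u\varphi$ is $\varphi^{-1} H \varphi$; hence $g$ fixes $u\varphi$ if and only if $\varphi g \varphi^{-1} \in H$. The hypotheses $q' \mid q$ and $q' < q/2$ ensure $g$ has order exactly $q/q' \geq 3$, which will be essential.

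The first step is to show that $H \leq N_G(\langle g \rangle)$. A direct computation using $\rho_i^2 = 1$ gives $\rho_1 g \rho_1 = \rho_2 g \rho_2 = g^{-1}$, since both $\rho_1$ and $\rho_2$ invert $\rho_1 \rho_2$ and therefore any of its powers. Hence $\rho_1, \rho_2 \in N_G(\langle g \rangle)$, and so does every element of $H$.

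The main step is to prove that $u\varphi$ is fixed by $g$ precisely when $\varphi \in N_G(\langle g \rangle)$. The ``if'' direction is immediate, as then $\varphi g \varphi^{-1} \in \langle g \rangle \subseteq H$. For the converse, suppose $\varphi g \varphi^{-1} \in H$. Since conjugation preserves order, $\varphi g \varphi^{-1}$ has order $q/q' \geq 3$, so it cannot be one of the reflections of the dihedral group $H$. It must therefore lie in the cyclic rotation subgroup $\langle \rho_1 \rho_2 \rangle$, and since $\langle g \rangle$ is the unique subgroup of $\langle \rho_1 \rho_2 \rangle$ of order $q/q'$, every element of that order already lies in $\langle g \rangle$. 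Thus $\varphi g \varphi^{-1} \in \langle g \rangle$, i.e., $\varphi \in N_G(\langle g \rangle)$.

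Combining the steps, the set of vertices fixed by $g$ is in bijection with the right $H$-cosets contained in $N_G(\langle g \rangle)$, so its size is $[N_G(\langle g \rangle) : H]$. Because $H \leq N_G(\langle g \rangle) \leq G$, Lagrange's theorem gives that this index divides $[G : H] = v$, completing the argument. The main obstacle I expect is the forward implication of the main step: the identification of $\varphi g \varphi^{-1}$ as an element of $\langle g \rangle$ (and not merely of $H$) is precisely where the hypothesis $q' < q/2$ is used; if $q/q' \leq 2$ the conjugate could be a reflection of $H$ lying outside $\langle g \rangle$, and the clean identification of the fixed set with a coset space would break.
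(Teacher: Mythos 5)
Your proof is correct and follows essentially the same route as the paper's: both identify the vertices fixed by $(\rho_1\rho_2)^{q'}$ with the right cosets of $H=\langle\rho_1,\rho_2\rangle$ inside the normalizer of $\langle(\rho_1\rho_2)^{q'}\rangle$, using that a conjugate of order at least $3$ landing in the dihedral group $H$ must lie in its rotation subgroup, and then conclude by the index tower $[G:H]=[G:N][N:H]$. The only cosmetic difference is that you verify $H\leq N_G(\langle g\rangle)$ directly by noting $\rho_1,\rho_2$ invert $g$, whereas the paper deduces that $N$ is a union of $H$-cosets from well-definedness of the fixed-vertex condition; these are interchangeable.
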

	
	\begin{proof}
	Let $u$ be the base vertex, with stabilizer $\langle \rho_1, \rho_2 \rangle$,
	and fix a $q' < q/2$ such that $q'$ divides $q$. If $v$ is an arbitrary vertex
	of $\calP$, then we may write $v = u \alpha$ for some $\alpha \in \G(\calP)$.
	Then $(\rho_1 \rho_2)^{q'}$ fixes $v$ if and only if $\sigma := \alpha (\rho_1 \rho_2)^{q'} \alpha^{-1}$ fixes
	$u$, which is true if and only if $\sigma \in \langle \rho_1, \rho_2 \rangle$. Now, since $q' < q/2$, it follows
	that $(\rho_1 \rho_2)^{q'}$ has order 3 or more, and thus the same is true of $\sigma$. Then since
	$\sigma \in \langle \rho_1, \rho_2 \rangle$, which is dihedral, we find that 
	$\sigma \in \langle (\rho_1 \rho_2)^{q'} \rangle$. Therefore, $(\rho_1 \rho_2)^{q'}$ fixes $u \alpha$
	if and only if $\alpha$ normalizes $\langle (\rho_1 \rho_2)^{q'} \rangle$. 
	
	Now, let $N$ be the normalizer of $\langle (\rho_1 \rho_2)^{q'} \rangle$ in $\G(\calP)$, and let
	$H = \langle \rho_1, \rho_2 \rangle$. The stabilizer of $u$ is $H$, and so $(\rho_1 \rho_2)^{q'}$ fixes $u \alpha$
	if and only if it fixes $u \beta \alpha$ for every $\beta \in H$. By the previous paragraph,
	this means that $\alpha \in N$ if and only if $\beta \alpha \in N$ for every $\beta \in H$. It follows that the
	number of vertices fixed by $(\rho_1 \rho_2)^{q'}$ is the index of $H$ in $N$. Since the total number of vertices
	is the index of $H$ in $\G(\calP)$, the result follows.
	\end{proof}
	
All the calculations in this paper were verified in GAP~\cite{GAP} and \textsc{Magma}~\cite{Magma}.  When these programs were used for a calculation, we will say that they were done on a Computer Algebra System (CAS).


\section{Polyhedra with few vertices}
\label{sec:few}

In this section we consider vertex-faithful regular polyhedra with few vertices.  If a regular polyhedron has vertex figures of size $q$, this means that the base vertex is incident to $q$ edges.  We will refer to the other vertices incident to these $q$ edges as the {\em vertices on the base vertex figure}.
 For a fixed size of vertex figure $q$, we will show that the number of vertices $v$ is bounded below by $q$, and we will consider what happens when $v$ obtains this minimum.   We note that the same bound does not apply for arbitrary regular polyhedra; for example the regular map $\{3,6\}_{(2,0)}$ is a polyhedron with only four vertices.  

In this section we also classify the vertex-faithful regular polyhedra with fewer than sixteen vertices.  This classification will be leveraged in the later sections.

\begin{proposition} \label{prop:qorientable}
Suppose $\calP$ is a vertex-faithful regular polyhedron of type $\{p, q\}$. If $\calP$ is orientable, then $q < v$; if $\calP$ is non-orientable, then $q \leq v$.
\end{proposition}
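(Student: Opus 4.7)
Let $H := \langle \rho_1, \rho_2 \rangle \cong D_q$ be the vertex-stabilizer of the base vertex $u$, so that $v = [\G(\calP):H]$. Let $d := |H|/|H \cap \rho_0 H \rho_0|$ denote the number of distinct neighbors of $u$; then $d$ divides $q$ and $H \cap \rho_0 H \rho_0 = \langle \s_2^d, \rho_2 \rangle$ is dihedral of order $2q/d$. Set $\sigma := \s_2^d$. Since $\sigma \in H \cap \rho_0 H \rho_0$ commutes with $\s_2$, it stabilizes every neighbor $u \rho_0 \s_2^i$ of $u$, so it fixes the $d+1$ vertices in $\{u\} \cup N(u)$.

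The main step is to show that vertex-faithfulness forces $d = q$. I would verify that $\langle \sigma \rangle$ is normalized by each of $\rho_0, \rho_1, \rho_2$. The identities $\rho_i \s_2 \rho_i = \s_2^{-1}$ for $i = 1, 2$ give $\rho_1, \rho_2 \in N_{\G(\calP)}(\langle \sigma \rangle)$. For $\rho_0$, the conjugate $\rho_0 \sigma \rho_0$ lies in the dihedral group $\langle \sigma, \rho_2 \rangle$ and has order $|\sigma| = q/d$; when $q/d \geq 3$ it must be a rotation of this dihedral group, i.e.\ a power of $\sigma$. Hence $\langle \sigma \rangle$ is normal in $\G(\calP)$, and as a subgroup of the core-free $H$ it must be trivial, forcing $\sigma = 1$ and $d = q$. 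This gives $q+1$ distinct vertices $u, u\rho_0, u\rho_0\s_2, \ldots, u\rho_0\s_2^{q-1}$, so $v \geq q + 1 > q$.

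The residual case is $d = q/2$ with $q$ even and $|\sigma| = 2$. Here $\langle \sigma, \rho_2 \rangle = \{1, \sigma, \rho_2, \sigma\rho_2\}$ is a Klein four-group, and $\rho_0\sigma\rho_0$ is one of its three involutions. The option $\rho_0\sigma\rho_0 = \sigma$ again makes $\langle \sigma \rangle$ normal in $\G(\calP)$, a contradiction; and $\rho_0\sigma\rho_0 = \rho_2$ would force $\sigma = \rho_0\rho_2\rho_0 = \rho_2$ (since $\rho_0, \rho_2$ commute), impossible because $\sigma$ is a rotation and $\rho_2$ a reflection of $H$. So $\rho_0\sigma\rho_0 = \sigma\rho_2$, which rearranges to the identity
\[ \bigl(\rho_0 (\rho_1\rho_2)^{q/2}\bigr)^2 \rho_2 = 1, \]
a word of odd length in $\rho_0, \rho_1, \rho_2$. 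Hence $\calP$ is non-orientable, so in the orientable setting only $d = q$ can occur and $v > q$ is established.

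For the non-orientable bound $v \geq q$ in the remaining $d = q/2$ case, one combines the orbit structure of $\s_2$ on $V$ with \pref{steves-thm}. Since $\calP$ is vertex-faithful, $\s_2$ has order exactly $q$ as a permutation of $V$, so the least common multiple of its cycle lengths is $q$; together with the fixed point $u$ and the neighbor-orbit of size $q/2$, this forces additional $\s_2$-orbits whose sizes supply the prime-power factors of $q$ missing from $q/2$. Applying \pref{steves-thm} with a proper divisor $q' < q/2$ of $q$ (for instance a small prime divisor) places a divisibility constraint on $v$ through the count of $\s_2^{q'}$-fixed vertices; combining this with the orbit enumeration rules out all candidate values $v < q$. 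This final case-by-case combination of orbit arithmetic with divisibility, rather than the clean normalization argument of the generic case, is the main obstacle I expect.
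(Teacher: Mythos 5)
Your treatment of the generic case is correct and is essentially the paper's own argument: the paper likewise looks at the stabilizer of $u\rho_0$ in $\langle \s_2 \rangle$, observes that $\rho_0 \s_2^a \rho_0$ must be a power of $\s_2^a$ whenever that element has order at least $3$ (or whenever $\calP$ is orientable), and concludes from core-freeness that the only surviving possibility is the non-orientable one with $a = q/2$ and the relation $\rho_0 \s_2^{q/2} \rho_0 = \rho_2 \s_2^{q/2}$. Your Klein-four-group case analysis is a clean way to pin down that relation and to see that the option $\rho_0 \s_2^{q/2}\rho_0 = \rho_2$ is impossible.

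The gap is the final step, which you yourself flag as unfinished: establishing $v \geq q$ once the relation above holds. The strategy you propose --- combine the fact that the $\s_2$-orbit sizes have least common multiple $q$ with the divisibility constraint of \pref{steves-thm} --- cannot work on its own. Concretely, take $q = 30$: a hypothetical vertex set with $\s_2$-orbits of sizes $1$, $15$ (the neighbors), and $2$, so $v = 18 < q$, has orbit sizes with least common multiple $30$, and for every divisor $q' < 15$ of $30$ the number of $\s_2^{q'}$-fixed vertices is $1$ or $3$, both of which divide $18$. So all of your stated constraints are satisfied by a configuration with $v < q$, and no amount of ``orbit arithmetic plus divisibility'' will exclude it. What is actually needed --- and what the paper does --- is to exploit the relation $\rho_0 \s_2^{q/2} \rho_0 = \rho_2 \s_2^{q/2}$ itself: it forces each pair $w, (w)\rho_2$ of neighbors of $u$ not fixed by $\rho_2$ to sit in an alternating $\{0,2\}$-square of the vertex CPR graph whose other two vertices are interchanged by $\s_2^{q/2}$ and hence are \emph{not} neighbors of $u$. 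Since $\rho_0$ is a bijection, this injects (pairs of) neighbors into (pairs of) non-neighbors and yields $v \geq 1 + \frac{q}{2} + (\frac{q}{2} - 2) = q-1$ directly, after which a separate argument with $\s_2^{q/4}$ acting on the moved vertices eliminates the case $\frac{q}{2}$ even and leaves $v \geq q$. Without some use of the relation beyond mere non-orientability, the bound does not follow.
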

		
\begin{proof}
Let $\G(\calP) = \langle \rho_0, \rho_1, \rho_2 \rangle$, 
and let $H = \langle \rho_1, \rho_2 \rangle$.  The vertices of $\calP$ correspond to cosets $H \varphi$, with the base vertex corresponding to the coset $H = H1$ (where $1$ is the identity of $\G(\calP)$).   We may assume that $v \geq 4$.  Otherwise, if $\calP$ is vertex-faithful, then $\G(\calP)$ embeds into the symmetric group acting on three elements, and thus has size less than or equal to six.  It can be checked (for instance in~\cite{atlas}) that there is no regular polyhedron with an automorphism group this small.

Let $\s_2 = \rho_1 \rho_2$, and consider the action of $\s_2$ on the vertex $H \rho_0$, 
which is the other vertex incident to the base edge. 

Assume that $q \geq v$.  There are at most $v-1$ distinct vertices on the base vertex figure,
and thus the stabilizer of 		
$H \rho_0$ in $\langle \s_2 \rangle$ is non-trivial and generated by $\s_2^a$ for some smallest positive integer $a$.  Now, to say that $\s_2^a$ fixes $H \rho_0$ is to say that $\rho_0 \s_2^a \rho_0$ lies in $H = \langle \rho_1, \rho_2 \rangle$.
If $\calP$ is orientable, or if $\s_2^a$ has order 3 or more, then $\rho_0 \s_2^a \rho_0$ must be
a power of $\s_2^a$. In this case, $\langle \s_2^a \rangle$ is normal. 
Then for any vertex $H \varphi$, we have that $H \varphi \s_2^a = H \s_2^{ak} \varphi = H \varphi$,
and so $\s_2^a$ fixes every vertex of $\calP$, and thus $\calP$ is not vertex-faithful. Thus in particular, if $\calP$
is orientable and vertex-faithful, then $q < v$.

The remaining case to consider is when $\calP$ is non-orientable, $a = q/2$, and $\rho_0 \s_2^{q/2} \rho_0$ is not in 
$\langle \s_2^{q/2} \rangle$. Then $\rho_0 \s_2^{q/2} \rho_0 = \alpha \in \langle \rho_1, \rho_2 \rangle$, where $\alpha$ has odd length.
We note that $\rho_2$ commutes with the left side, so we have $(\rho_2 \alpha)^2 = 1$. 
Then since $\rho_2 \alpha$ has even length and order 2, 
it follows that $\rho_2 \alpha = \s_2^{q/2}$, and so $\alpha = \rho_2 \s_2^{q/2}$. 
In other words, the following relation holds in $\G(\calP)$:

\begin{equation} \label{rel1}
\rho_0 \s_2^{q/2} \rho_0 = \rho_2 \s_2^{q/2} 
\end{equation}

It follows that $\s_2^{q/2} \rho_0 \s_2^{q/2} = \rho_0 \rho_2$. Furthermore,
$\s_2^{q/2} \rho_0 \rho_1 \s_2^{q/2} = \rho_0 \rho_2 \rho_1$. This means that $\calP$
is \emph{internally self-Petrie} (see~\cite[Sec. 6]{internal-duality}). This also implies that $(\calP^{\delta})^{\pi}$ is internally self-dual.

Consider what Relation~\ref{rel1} means for the $\frac{q}{2}$ vertices on the base vertex figure, where $\s_2^\frac{q}{2}$ acts trivially.  It implies that if any neighbor of the base vertex is also fixed by $\rho_0$, then it is fixed by $\rho_2$.  
Thus $\rho_0$ has at most one fixed point on the base vertex figure (with zero fixed points when $\frac{q}{2}$ is odd). See Figure~\ref{fig:cpr}.
\begin{center}
\begin{figure}[htbp]
$$\includegraphics[scale=.6]{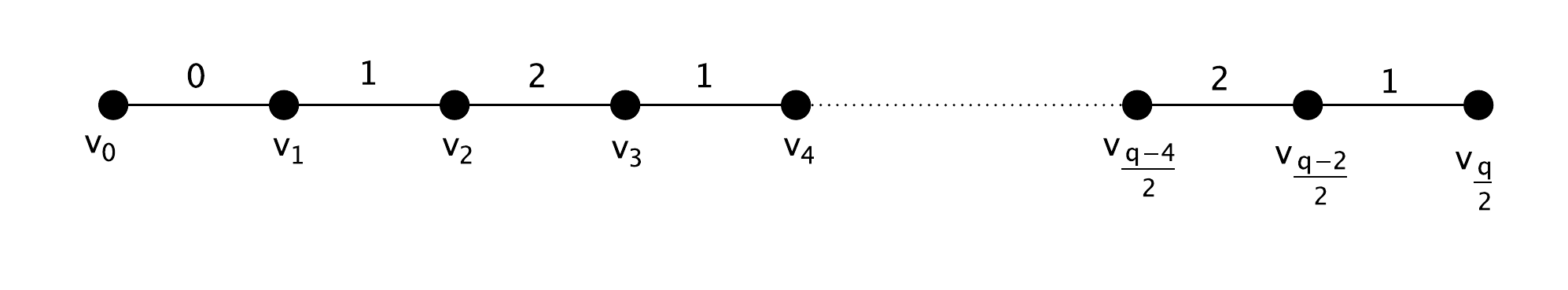}$$ \\
$$\includegraphics[scale=.6]{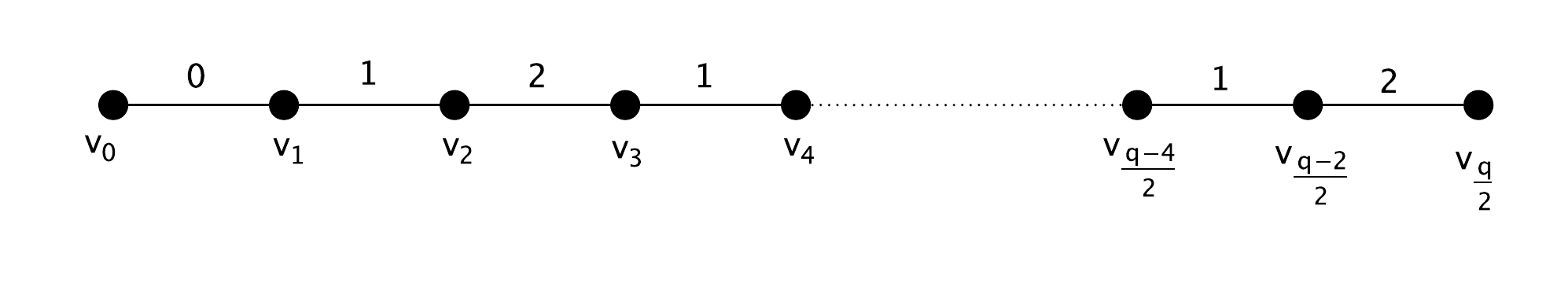}$$
\caption{Subgraphs of vertex CPR graphs induced by the base vertex $v_0$ and the $\frac{q}{2}$ vertices on the base vertex figure, for $\frac{q}{2}$ even and odd respectively.}
\label{fig:cpr}
\end{figure}
\end{center}
Let $X$ be the vertex CPR graph for $\calP$, and let $w$ be any vertex on the base vertex figure not fixed by $\rho_2$, and thus not fixed by $\rho_0$.  
Since $\rho_0$ and $\rho_2$ commute, in $X$ the vertex $w$ must be part of either an alternating square with edges of labels 0 and 2, or an end point of a double edge of labels 0 and 2.  
Since $\s_2^{q/2}$ fixes $w$, Relation~\ref{rel1} shows that the double edge case is not possible, and thus $w$ and $(w) \rho_2$ are part of an alternating square with two other vertices $x$ and $(x) \rho_2$.  
Similarly, Relation~\ref{rel1} implies that $x$ and $(x) \rho_2$ are interchanged by $\s_2^{q/2}$, and so they are not on the base vertex figure.  
Therefore for every pair of vertices $w$ and $(w) \rho_2$ on the base vertex figure, 
there exists a pair of vertices $x$ and $(x) \rho_2$ not on the base vertex figure. 
Counting up the vertices, we have the base vertex, the $\frac{q}{2}$ vertices on the base vertex figure, 
and the at least $\frac{q}{2}-2$ (or $\frac{q}{2} -1$ if $\frac{q}{2}$ is odd) vertices that are sent to the base vertex figure by $\rho_0$.  
Thus $v \geq 1+\frac{q}{2}+\frac{q}{2}-2 = q-1$ (or $v\geq q$ when $\frac{q}{2}$ is odd).

If $q = v+1$, then $\frac{q}{2}$ is even. Furthermore, if $\frac{q}{2}$ is even,
then exactly $\frac{q}{2}-2$ vertices are moved by $\s_2^{q/2}$ and sent to the base vertex figure by $\rho_0$.  We will now rule out the possibility that $q \geq v$ when $\frac{q}{2}$ is even, which will then prove that $q \leq v$.

Consider the action of $\s_2^{q/4}$ on the $\frac{q}{2}-2$ vertices that are moved by $\s_2^{q/2}$. Since $\s_2^{q/2}$
interchanges these vertices in pairs, $\s_2^{q/4}$ must act as $4$-cycles on all of these vertices.
Thus $\frac{q}{2}-2$ is divisible by $4$, which implies that $q \equiv 4$ (mod $8$).
So we may write $q = 4q'$ where $q'$ is odd. In order for $\s_2$ to have
order $q$, there must be some orbit of size $4k$. Furthermore, since the $q/2$ vertices on the base vertex figure
form a single orbit of size $2q'$, the orbit of size $4k$ must be among vertices that are not
fixed by $\s_2^{q/2}$, all of which are sent to the base vertex figure by $\rho_0$. Call this orbit $T$.
In the subgraph $X_{\{1,2\}}$ the vertices of $T$ form either an alternating path or cycle.  
In either case you can label the vertices of $T$ as $(0,1,2,3,\ldots,4k-1)$ based on the order in $X_{\{1,2\}}$, 
where vertex $i$ is adjacent to $i+1$ for $0 \leq i \leq 4k-2$.

\begin{figure}
$$\includegraphics[scale=.4]{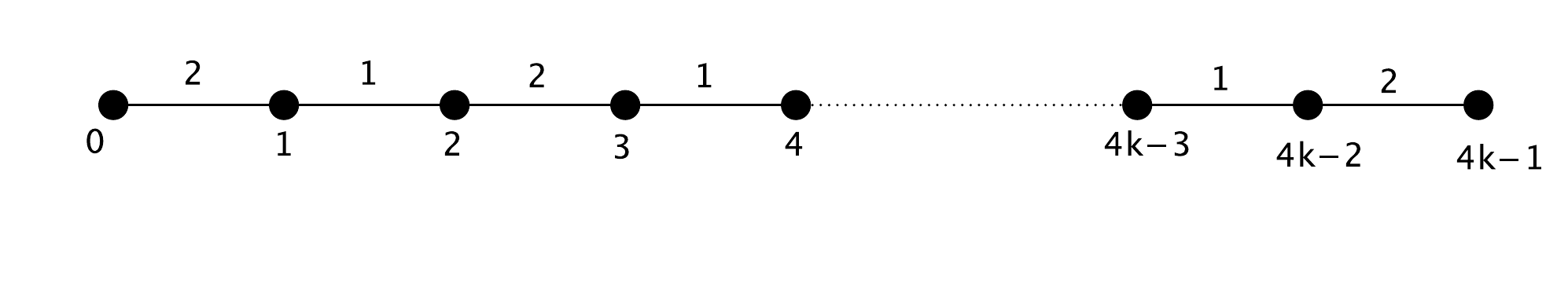}$$
$$\includegraphics[scale=.4]{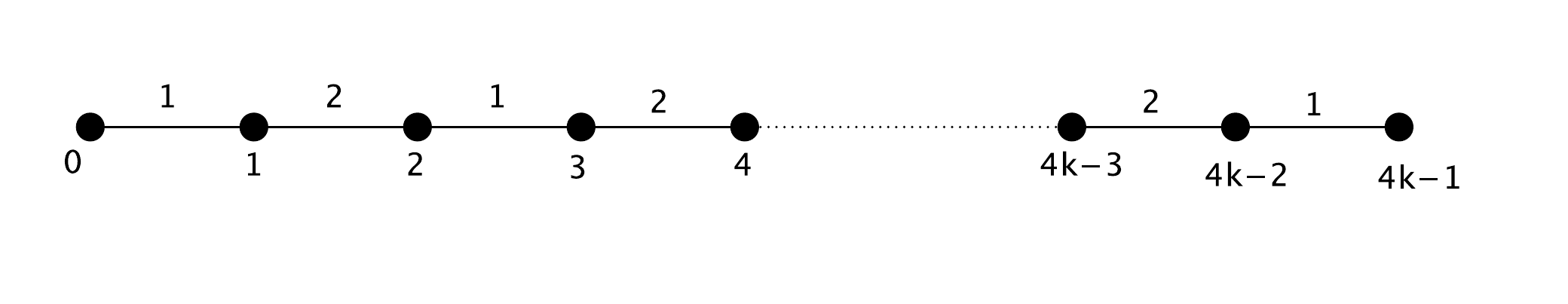}$$
$$\includegraphics[scale=.4]{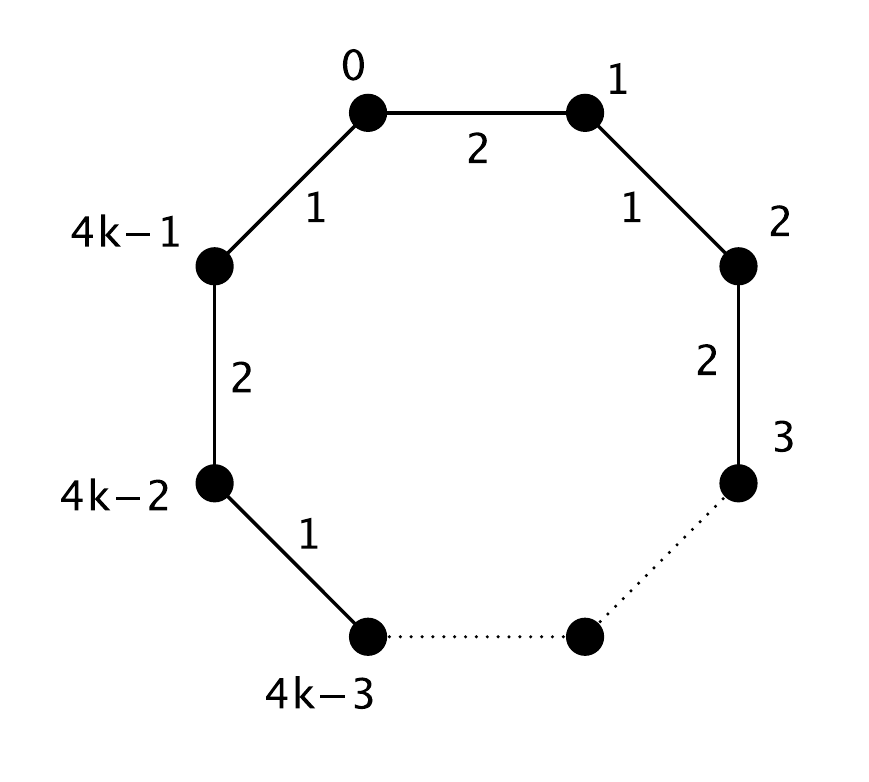}$$

\caption{Possible subgraphs of the vertex CPR graph for vertices in $T$.}
\end{figure}

Using the left hand side of Relation~\ref{rel1}, 
each vertex in $T$ is fixed by $\rho_0 \s_2^{\frac{q}{2}} \rho_0$.   On the other hand, for all vertices $i$,  $(i) \s_2^{\frac{q}{2}} = 4k-1-i$ when $T$ is an alternating path, and $(i) \s_2^{\frac{q}{2}} = i \pm 2k$ when $T$ is an alternating cycle.  Furthermore, in both cases $(i) \rho_2 \equiv i \pm 1 \pmod {4k}$.  In either case, $\rho_2 \s_2^{\frac{q}{2}}$
cannot fix all vertices of $T$, and so Relation~\ref{rel1} would not hold.   Therefore, $q \geq v$ and $\frac{q}{2}$ even is not possible for any vertex-faithful regular polyhedron.  In particular, for any vertex-faithful regular polyhedron $q \leq v$ and $q = v$ only if $\frac{q}{2}$ is odd and $\calP$ is internally self-Petrie and non-orientable.
\end{proof}

We saw that if a vertex-faithful regular polyhedron has the property that $q=v$,  then $v$ is  even, $\frac{v}{2}$ is odd, $\calP$ is non-orientable, self-Petrie, and satisfies Relation~\ref{rel1}.  This is extremely restrictive, and will allow us to fully understand the structure of $\calP$ up to isomorphism.    In this case, the orbits of the vertices under the action of $\langle \rho_1 \rho_2 \rangle$ are of size 1  in the case of the base vertex, size $\frac{v}{2}$ for the vertices on the base vertex figure.  
Additionally, every vertex not on the base vertex figure, is sent to the base vertex figure by $\rho_0$.

Let $T$ be any $\langle \rho_1, \rho_2 \rangle$-orbit of a vertex not on the base vertex figure (other than the base vertex).  We will show that $T$ always consists of two vertices connected by a single edge labeled $2$. Assume to the contrary that $T$ has at least 3 vertices.  Since $\rho_2$ does not fix any of these vertices, there must be an even number $2j$ of vertices in $T$, with $j > 1$. 
Similar to in the previous proof,  
each vertex of $T$ is fixed by $\rho_0 \s_2^{\frac{q}{2}} \rho_0$.   On the other hand, for all vertices $i$,  $(i) \s_2^{\frac{q}{2}} = 2j-1-i$ when $T$ is an alternating path, and $(i) \s_2^{\frac{q}{2}} = i \pm j$ when $T$ is an alternating cycle.  Furthermore, in both cases $(i) \rho_2 \equiv i \pm 1 \pmod {2j}$.  In either case, $\rho_2 \s_2^{\frac{q}{2}}$
cannot fix all vertices of $T$, and so Relation~\ref{rel1} would not hold if $T$ had at least 3 vertices. So $T$ has two vertices
that are connected by an edge labeled $2$. If they were also connected by an edge labeled $1$, then $\rho_2 \s_2^{\frac{q}{2}}$ would
move both vertices whereas $\rho_0 \s_2^{\frac{q}{2}} \rho_0$ would fix both vertices, violating Relation~\ref{rel1}. Thus $T$ consists of a single edge labeled $2$.

The vertex-set of the base facet consists of the $\langle \rho_0, \rho_1 \rangle$-orbit of the base vertex. We have that $\rho_0$
sends the base vertex to one of its neighbors; then $\rho_1$ sends that vertex to another neighbor of the base vertex,
and then $\rho_0$ sends that vertex to a vertex not on the base vertex figure. The above argument demonstrates that
$\rho_1$ fixes this vertex, and so there are exactly 4 vertices incident to the base facet, and thus $\calP$ is of type $\{4,v\}$. 
This gives a unique possible regular polyhedron for each $v \equiv 2$ (mod $4$).
  The vertex CPR graphs for $v=6$ and $v=10$ are shown below.   Using Theorem 4.4 of~\cite{CPR} we see that these are in fact polyhedra, but it remains to show that this permutation representation is the action on the vertices.

$$\includegraphics[scale=.5]{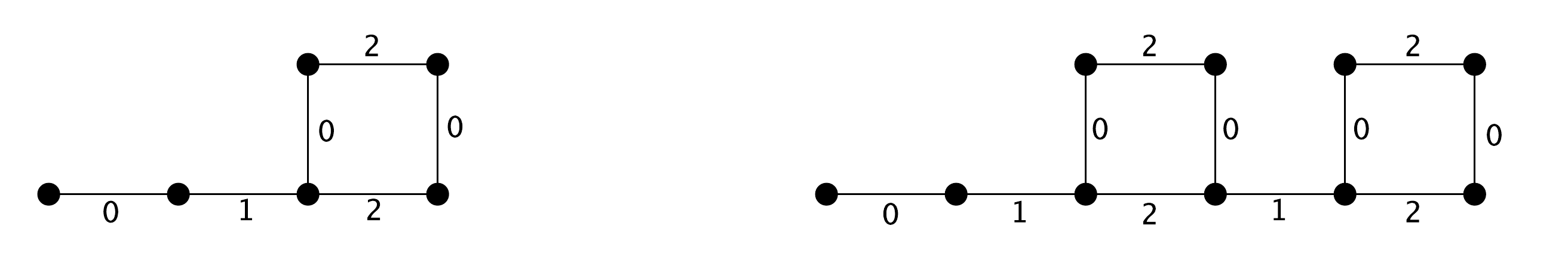}$$
	
\begin{corollary} \label{cor:vq}
If $\calP$ is a vertex-faithful regular polyhedron of type $\{p, q\}$ with $q = v$, then $\calP$ is $(\{4,4\}_{(q/2,0)})^{\pi \delta}$, with $q \geq 6$ and $q/2$ odd.
\end{corollary}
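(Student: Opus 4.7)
The plan is to capitalize on the very detailed structural analysis carried out in the paragraphs just before the corollary. That analysis has already pinned down $\calP$ up to isomorphism: it is non-orientable, internally self-Petrie, of type $\{4, v\}$, and has the explicitly depicted vertex CPR graph (a base vertex fixed by $\rho_1, \rho_2$; a $\{1,2\}$-alternating cycle of length $v/2$ realizing the base vertex figure, attached to the base vertex by a single $0$-edge; and $(v/2-1)/2$ attached alternating $\{0,2\}$-squares). Only three items remain to check: (i) the parity/bound $q \geq 6$ with $q/2$ odd, (ii) that this CPR graph really defines a polyhedron, and (iii) that this polyhedron is $(\{4,4\}_{(q/2,0)})^{\pi \delta}$.

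For (i), the pairing of the $v/2 - 1$ off-figure vertices into $\rho_2$-edges forces $v/2 - 1$ to be even, hence $v/2$ is odd; together with the requirement $q \geq 3$ for any polyhedron and the exclusion of $q=2$, this forces $q \geq 6$. For (ii) I would follow the authors' cue and apply Theorem 4.4 of~\cite{CPR} to the explicit graph: the subgraph $X_{\{0,1\}}$ decomposes into one alternating path of length $4$, a collection of $\{0,1\}$-doubled edges, and isolated vertices, giving $p = 4$; the subgraph $X_{\{1,2\}}$ consists of one alternating cycle of length $v/2$ together with $\rho_2$-fixed singletons, giving $q = v$; and the rank-3 intersection property is immediate from the graph layout. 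This yields a regular polyhedron $\calQ$.

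The main obstacle is step (iii). My plan is to argue by uniqueness: I would show directly that $\calR := (\{4,4\}_{(q/2,0)})^{\pi \delta}$ itself satisfies every structural property that the preceding analysis used to pin down $\calP$, and then invoke that uniqueness. Setting $s = q/2$ (odd, $s \geq 3$), the Petrial-then-dual construction gives $\G(\calR)$ as the same abstract group as $\G(\{4,4\}_{(s,0)})$ but with generators $\rho_0 = r_2$, $\rho_1 = r_1$, $\rho_2 = r_0 r_2$, where $r_0, r_1, r_2$ are the standard generators of the toroid. I would check that $(\rho_0\rho_1)^4 = (r_2 r_1)^4 = 1$; that $(\rho_1\rho_2)^{2s} = (r_1 r_0 r_2)^{2s} = 1$, with order exactly $2s$ since this is the Petrie length of the self-dual toroid $\{4,4\}_{(s,0)}$ for odd $s$; and that Relation~\ref{rel1} holds after substitution (this is the delicate computation in this step, but it reduces to a short word identity in the toroidal group).

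To close, an order count confirms vertex-faithfulness: $|\G(\{4,4\}_{(s,0)})| = 8s^2 = 2q^2$, and $|\langle \rho_1, \rho_2 \rangle| = 2q$, so the index $[\G(\calR) : \langle \rho_1, \rho_2 \rangle] = q$ gives exactly $q$ vertices and shows $\langle \rho_1, \rho_2 \rangle$ is core-free. Thus $\calR$ is a vertex-faithful regular polyhedron of type $\{4, q\}$ with $q = v$ vertices matching all features derived for $\calP$, so $\calP \cong \calR = (\{4,4\}_{(q/2,0)})^{\pi \delta}$.
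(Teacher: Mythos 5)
Your strategy for the identification step is genuinely different from the paper's. The paper argues top-down: having shown that the polyhedron with the depicted CPR graph satisfies Relation~\ref{rel1} (hence is self-Petrie), it realizes it as a quotient of the universal $\{4,q\}_4$, i.e.\ of the dual Petrial of $\{4,4\}_{(q/2,q/2)}$; non-orientability forces the quotient to be proper, and the only proper smooth quotient of $\{4,4\}_{(q/2,q/2)}$ that keeps Petrie polygons of length $q$ is $\{4,4\}_{(q/2,0)}$. You instead argue bottom-up: verify that the candidate $\calR=(\{4,4\}_{(q/2,0)})^{\pi\delta}$ itself is a vertex-faithful polyhedron with $q=v$ satisfying all the structural constraints, and then invoke the uniqueness established in the preceding paragraphs. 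That skeleton is sound, and it arguably deals more cleanly with the issue the paper explicitly flags (``it remains to show that this permutation representation is the action on the vertices''), since in your argument both $\calP$ and $\calR$ are forced to have the same vertex CPR graph, which determines the generators and hence the polyhedron.

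There is, however, a genuine gap at the decisive point. The claim that the index computation $[\G(\calR):\langle\rho_1,\rho_2\rangle]=q$ ``shows $\langle\rho_1,\rho_2\rangle$ is core-free'' is a non sequitur: index $q$ gives $q$ vertices but says nothing about whether the action on those $q$ cosets is faithful. A priori $\calR$ could fail to be vertex-faithful (by \pref{fixes-verts} the core of a non-flat polyhedron is some $\langle\s_2^{q'}\rangle$, and nothing in your order count rules out $q'<q$), and then the uniqueness analysis --- which applies only to \emph{vertex-faithful} polyhedra with $q=v$ --- could not be applied to $\calR$, so $\calP\cong\calR$ would not follow. You need an actual argument that the kernel of the action of $\G(\{4,4\}_{(s,0)})$ on the Petrie polygons is trivial, equivalently that no nontrivial $\langle\s_2^{q'}\rangle$ is normal; Relation~\ref{rel1} rules out normality of $\langle\s_2^{s}\rangle$ but not of every candidate core. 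Relatedly, the verification of Relation~\ref{rel1} inside the toroidal group is precisely the computation that ties $\calR$ to the structure derived for $\calP$, and you defer it entirely; as it stands, the two load-bearing facts about $\calR$ are each either asserted with invalid justification or postponed.
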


\begin{proof}

We have shown above that there is only one possibility for such a vertex-faithful regular polyhedron of type $\{p,q\}$ with $q=v$ to exist.  Furthermore, we have seen that if it does exist, then $p=4$ and $v \equiv 2$ (mod $4$).  It remains to show that such a polyhedron does exist for these cases. The result can be checked for $v \leq 5$ using a CAS.

Fix $q$ such that $q \geq 6$ and $q \equiv 2$ (mod $4$), and let $\calP$ be the regular polyhedron with the 
vertex CPR graph above. 
We have established that $\calP$ is a polyhedron of type $\{4, q\}$ that satisfies Relation~\ref{rel1}.
In particular, this means that $\calP$ is self-Petrie and thus a quotient of the universal polyhedron $\calQ = \{4, q\}_4$.
Now, $\calQ$ is the dual of the Petrial of $\{4, 4\}_q$, the latter of which is isomorphic to the toroidal map $\{4, 4\}_{(q/2, q/2)}$
and which has an automorphism group of order $4q^2$. Since $\calQ$ is orientable whereas $\calP$ is non-orientable,
it follows that $\calP$ is a proper quotient of $\calQ$. Then the Petrial of the dual of $\calP$ is a proper smooth quotient
of $\{4, 4\}_q$ that still has petrie polygons of length $q$. The only possibility is $\mathcal T = \{4, 4\}_{(q/2, 0)}$,
and so $\calP$ is the dual of the Petrial of $\mathcal T$.
\end{proof}

Now let us consider non-vertex-faithful regular polyhedra whose vertex-faithful quotient is $(\{4,4\}_{(q/2,0)})^{\pi \delta}$.

\begin{lemma}\label{lem:nocover}
Suppose $\calP$ is a finite non-flat vertex-faithful regular polyhedron of type $\{p,q\}$ which satisfies Relation~\ref{rel1}, with $q$ even. 
If $\calQ$ is a regular polyhedron that covers $\calP$ and $\calQ$ has the same number of vertices as $\calP$, then $\calQ = \calP$.
In particular, there are no non-vertex-faithful regular polyhedra whose vertex-faithful quotient is $(\{4,4\}_{(q/2,0)})^{\pi \delta}$ with $q \geq 6$ and $q/2$ odd.
\end{lemma}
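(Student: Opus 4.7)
The plan is to suppose that $\calQ$ is a \emph{proper} cover of $\calP$ with the same vertex set and derive a contradiction; then $\calQ = \calP$ follows. Write $\calQ$ as type $\{p, q''\}$ so that $q$ divides $q''$, and let $\psi : \G(\calQ) \to \G(\calP)$ be the canonical covering. Since $\calP$ is vertex-faithful and non-flat, the setup of Proposition~\ref{prop:vf-quos} applies with our $\calP$ playing the role of the vertex-faithful quotient and our $\calQ$ playing the role of the cover; in particular, $N := \ker \psi = \langle \s_2^q \rangle \subseteq \G(\calQ)$ has order $q''/q$.

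The first step is to extract a divisibility constraint on $q''$ from Relation~\ref{rel1}. The relation $\rho_0 \s_2^{q/2} \rho_0 \s_2^{-q/2} \rho_2 = 1$ holds in $\G(\calP)$, so the corresponding word $\alpha$ lies in $N$ when viewed in $\G(\calQ)$. Counting generator occurrences, $\alpha$ contains $s \equiv 0$ copies of $\rho_0$ and $t \equiv 1$ total copies of $\rho_1, \rho_2$ (mod $2$), regardless of the parity of $q/2$. Part~(d) of Proposition~\ref{prop:vf-quos} then forces $\s_2^q = \s_2^{-q}$ in $\G(\calQ)$, i.e.\ $\s_2^{2q} = 1$, so $q'' \in \{q, 2q\}$. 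If $q'' = q$ then $N = 1$, $\psi$ is an isomorphism, and $\calQ = \calP$.

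The main obstacle is ruling out the remaining case $q'' = 2q$, where $N = \{1, \s_2^q\}$. Lifting Relation~\ref{rel1} from $\G(\calP)$ to $\G(\calQ)$ yields $\rho_0 \s_2^{q/2} \rho_0 = \rho_2 \s_2^{q/2} n$ for some $n \in N$. I plan to square both sides. By part~(b) of Proposition~\ref{prop:vf-quos}, $\rho_0 \s_2^q \rho_0$ is an element of $N$ with the same order as $\s_2^q$, so it must equal $\s_2^q$; hence $\rho_0$ commutes with $\s_2^q$, and the left-hand side squares to $\rho_0 \s_2^q \rho_0 = \s_2^q$. On the right-hand side, a direct calculation using $\rho_2 \s_2 \rho_2 = \s_2^{-1}$ gives $(\rho_2 \s_2^{k})^2 = \s_2^{-k}\s_2^k = 1$ for every integer $k$, so both subcases $n \in \{1, \s_2^q\}$ yield $\s_2^q = 1$, contradicting $q'' = 2q$.

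The ``in particular'' statement then follows from Proposition~\ref{prop:fixes-verts}: a non-vertex-faithful regular polyhedron $\calR$ whose vertex-faithful quotient is $\calP := (\{4,4\}_{(q/2,0)})^{\pi\delta}$ must be non-flat (Proposition~\ref{prop:fixes-verts} supplies the vertex-faithful quotient only for non-flat polyhedra, and $\calP$ itself is non-flat, being of type $\{4, q\}$ with $q \geq 6$), and that proposition also guarantees that $\calR$ properly covers $\calP$ with the same vertex set. By Corollary~\ref{cor:vq}, $\calP$ satisfies Relation~\ref{rel1} with $q$ even, so the main statement applied to $\calQ = \calR$ forces $\calR = \calP$, contradicting $\calR$'s non-vertex-faithfulness.
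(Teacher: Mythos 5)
Your proof is correct and follows essentially the same route as the paper: both lift Relation~\ref{rel1} to $\G(\calQ)$ and observe that the right-hand side $\s_2^{kq}\rho_2\s_2^{q/2}$ is an odd-length element of the dihedral group $\langle\rho_1,\rho_2\rangle$ and hence squares to the identity, which forces $\s_2^{q/2}$ to have order $2$ and kills the kernel $N=\langle\s_2^q\rangle$. Your preliminary reduction to $q''\in\{q,2q\}$ via Proposition~\ref{prop:vf-quos}(d) is harmless but unnecessary, since your squaring computation $(\rho_2\s_2^{q/2}n)^2=1$ works for any $n\in\langle\s_2\rangle$ and already yields $\s_2^q=1$ directly.
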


\begin{proof}
Let $\calP$ and $\calQ$ satisfy the conditions of the lemma. Let $\langle \rho_0, \rho_1, \rho_2 \rangle = \Gamma(\calQ)$, and let $\s_2 = \rho_1 \rho_2$.
By Proposition~\ref{prop:fixes-verts}, $N = \langle \s_2^q \rangle$, and thus lifting Relation~\ref{rel1} from $\Gamma(\calP)$ to $\Gamma(\calQ)$ we get $\rho_0 \s_2^{\frac{q}{2}} \rho_0 \s_2^{-\frac{q}{2}} \rho_2 = \s_2^{kq}$ for some integer $k$.

Therefore, in $\Gamma(\calQ)$
$$ \rho_0 \s_2^{\frac{q}{2}} \rho_0  = \s_2^{kq} \rho_2 \s_2^{\frac{q}{2}}.$$

The right hand side of this equation has order 2 (being an element of odd length in a dihedral group), and thus $\s_2^{\frac{q}{2}}$ also has order 2.  Therefore $\calQ$ is of type $\{p,q\}$, and since $\calP$ was also of this type we know that $N$ was trivial.  Thus $\calP = \calQ$. The rest follows immediately.
\end{proof}

Motivated by the preceding results, we briefly consider vertex-faithful regular polyhedra with $q=v-1$.  We leave the full details of this case as an open question. Up to 2000 flags, there are only 7 regular polyhedra with $q = v-1$, of types $\{3, 3\}$, $\{4, 3\}$, $\{3, 5\}$, $\{5, 5\}$,
$\{4, 15\}$, $\{8, 15\}$, and $\{12, 15\}$. Only the first 4 are vertex-faithful.
Suppose $\calP$ is a vertex-faithful regular polyhedron with $q = v-1$.  If $\calP$ is orientably regular, then the vertices on the base vertex figure are all distinct.   This implies that the edge graph of $\calP$ is the complete graph on $v$ vertices, (which is to say, $\calP$ is a \emph{neighborly} polyhedron), as the base vertex is adjacent to every other vertex.  Then, one can use the fact that the regular imbeddings of the complete graphs in orientable surfaces have been classified in~\cite{complete-imbeddings}.

Finally, we mention one further corollary of Proposition~\ref{prop:qorientable} that helps with searching for vertex-faithful regular polyhedra.

\begin{corollary}
A vertex-faithful polyhedron with $v$ vertices has at most $2v^2$ flags.
\end{corollary}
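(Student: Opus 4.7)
The plan is to observe that this is an almost immediate consequence of Proposition~\ref{prop:qorientable} combined with the standard flag count for a regular polyhedron. Recall from the end of the subsection on abstract polyhedra that a regular polyhedron of type $\{p,q\}$ with $v$ vertices has exactly $|\Gamma(\calP)| = 2vq$ flags.

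So the only thing I need is an upper bound on $q$ in terms of $v$. Proposition~\ref{prop:qorientable} provides exactly this bound: if $\calP$ is vertex-faithful (orientable or not), then $q \leq v$. Therefore
\[
\#\{\text{flags of }\calP\} \;=\; 2vq \;\leq\; 2v \cdot v \;=\; 2v^2,
\]
which is what we want.

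There is essentially no obstacle here beyond invoking the right previous result; the work was done in proving Proposition~\ref{prop:qorientable}. The only thing worth being slightly careful about is that the bound $q \leq v$ in that proposition was established for both the orientable case (where in fact $q < v$) and the non-orientable case (where $q = v$ is realized by $(\{4,4\}_{(q/2,0)})^{\pi\delta}$), so the inequality $2vq \leq 2v^2$ is actually attained, showing that the bound is sharp.
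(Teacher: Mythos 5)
Your proof is correct and matches the paper's intended argument: the corollary is stated as an immediate consequence of Proposition~\ref{prop:qorientable} ($q \leq v$ for vertex-faithful polyhedra) together with the flag count $|\Gamma(\calP)| = 2vq$. Your added remark about sharpness via $(\{4,4\}_{(q/2,0)})^{\pi\delta}$ is a correct bonus observation.
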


\subsection{Vertex-faithful polyhedra with few vertices }
\label{sec:smallV}
 
Here we will give details of all the vertex-faithful polyhedra with fewer than sixteen vertices. 
The results can be achieved by analyzing the classification of all regular polytopes with up to 4000 flags~\cite{conder-atlas}, or by classifying regular polytopes for transitive groups up to degree 15.   The details of our classification are found in Table~\ref{tab:small1} and Table~\ref{tab:small2}.

 There are no vertex-faithful regular polyhedra where the number of vertices is a prime number less than 16.  
  We will see in Section~\ref{sec:prime} that this continues to hold for all primes.
  Additionally, we point out that there are exactly two vertex-faithful regular polyhedra with $v=14$ vertices; one of them is flat and will be described in the second row of Table~\ref{tab:flat-2b}, the other is
   $(\{4,4\}_{(v,0)})^{\pi \delta}$, the dual of the Petrial of $\{4,4\}_{(v,0)}$.  We will see in Section~\ref{sec:twiceprime} that this classification remains whenever $v$ is twice any prime at least 7.

     In our tables, we provide the ``Atlas Canonical Name'' of the polyhedron from Hartley's Atlas~\cite{atlas}, as well as a description of the automorphism group of the polyhedron, where each one is a quotient of the string Coxeter group of type $\{p,q\}$, with $p$ and $q$ found in the Atlas Canonical Name.  For each group, we give the order of the Petrie element $z_1=\rho_0 \rho_1 \rho_2$, which gives the length of a 1-zigzag of $\calP$.  We also give the orders of the elements $h=\rho_0 \rho_1 \rho_2 \rho_1$ which describes the sizes of the 2-holes in the map, and $z_2=\rho_0 \rho_1 \rho_2 \rho_1 \rho_2$ which gives the length of the 2-zigzag of $\calP$.  

In Problem 7 of~\cite{polytope-problems}, it is asked to what extent a regular polyhedron of fixed type is determined by the lengths of its $j-$holes and the lengths of its $j-$zigzags.   Considering this question, we also note whether the polyhedron is ``universal'' with respect to the length of its $1$-zigzags, $2$-zigzags, and $2$-holes; otherwise further relators are needed to form a presentation for the automorphism group.

\begin{table}[h]
\begin{center} \begin{tabular}{|l|l|l|l|l|l|l|} \hline
$v$ & Atlas Canonical Name & Also Known As &   $|z_1|$ & $|h|$ & $ | z_2 |$ & Universal\\ \hline
4 & $\{3,3\}*24$ & tetrahedron, 3-simplex, $\{3,3\}$ & 4 & 3 & 4  &Y\\ 
4 &$\{4,3\}*24$ &  hemi-cube, $\{4,3\}_3$ & 3 & 4 & 3 & Y \\  \hline
 6& $\{3,4\}*48$ & octahedron, $\{3,4\}$ &6&4 & 4& Y\\ 
6& $\{3,5\}*60$ &  hemi-icosahedron, $\{3,5\}_5$ &5&5 & 3&Y \\
6& $\{4,6\}*72$ & $(\{4,4\}_{(3,0)})^{\pi \delta}$  &4&6 & 6  &N \\
6& $\{5,5\}*60$ &   $\{5,5\}_3$ & 3 &3 &5&Y\\
6& $\{6,3\}*36$ &   $\{6,3\}_{(1,1)}$ & 6 &6 & 6 &N \\
6& $\{6,4\}*48b$ &   $\{6,4\}_3$ &  3& 4 & 4  &Y\\ \hline
8&  $\{4,3\}*48$ & cube, $\{4,3\}$ & 6 &4& 6  &Y\\ 
8&  $\{4,4\}*64$ &  $\{4,4\}_{(2,2)}$, $\{4,4\}_4$,  &4&4& 4& Y\\
8&  $\{6,3\}*48$ & $\{6,3\}_{(2,0)}$, $\{6,3\}_4$ &4&6& 4   &Y\\
8&  $\{8,4\}*64b$ &  &8&4& 4 &N\\
\hline
9&   $\{3,6\}*108$ &$\{3,6\}_{(3,0)}$, $\{3,6\}_6$,  &6&6& 6  &Y\\
9&  $\{4,4\}*72$ &  $\{4,4\}_{(3,0)}$, $\{4,4 | 3\}$,  &6&3& 6 &Y\\
9&  $\{6,4\}*72$ &  $(\{4,4\}_{(3,0)})^\pi$ &4&6&3     &Y\\
9&  $\{6,6\}*108$ &$\{6,6\}_3$   &3&6& 6&Y\\
\hline
10&   $\{4,6\}*120$ & &5&3 & 4 &Y\\ 
10&   $\{4,10\}*200$ & $(\{4,4\}_{(5,0)})^{\pi \delta}$ & 4& 10 &   10 &N\\
10&   $\{5,3\}*60$ & hemi-dodecahedron, $\{5,3\}_5$  & 5& 5& 5  &Y\\
10&   $\{5,6\}*120a$ &   &4  &4&  3 &Y\\   
10&   $\{6,6\}*120$ &  & 6& 5&  5 &N \\
10&   $\{10,5\}*100$ &   & 10& 10& 10 &N\\
\hline
\end{tabular}
\caption{The vertex-faithful regular polyhedra with 10 or fewer vertices.
\label{tab:small1}
}
\end{center}
\end{table}


\begin{table}
\begin{center} \begin{tabular}{|l|l|l|l|l|l|l|} \hline
$v$ & Atlas Canonical Name & Also Known As &   $|z_1|$ & $|h|$ & $ | z_2 |$ & Universal\\ \hline
12&   $\{3,5\}*120$ & icosahedron, $\{3,5\}$ &10& 5& 6 & Y  \\ 
12&   $\{3,6\}*144$ &  $\{3,6\}_{(2,2)}$ & 12&6&4 & Y  \\
12&   $\{3,8\}*192$ & $\{3,6\}_6$  &6&8& 8 & Y \\
12&   $\{4,5\}*120$ &   &6&6 & 4 & N \\
12&   $\{4,6\}*144$ &   $\{4,6\}_4$ &4&6& 6 & Y   \\
12&   $\{5,5\}*120$ &   $\{5,5 | 3\}$ &6&3 & 10 & Y \\
12&   $\{6,4\}*96$ &  & 6& 4&4 & N \\
12&   $\{6,5\}*120a$ &    &4&4& 6 & N  \\
12&   $\{6,5\}*120b$ &    &10& 10& 6 & N \\
12&   $\{6,5\}*120c$ &    &5& 10& 3 & Y\\
12&    $\{6,8\}*192a$ & $\{6,8\}_3$ &3&8& 8 & Y  \\ 
12&   $\{10,5\}*120a$ &  & 6&6 & 10 & N \\
12&   $\{10,5\}*120b$ & $\{10,5\}_3$ & 3& 6 & 5 & Y \\
12&   $\{12,4\}*96c$ &   &12&4&    4 & N \\
12&   $\{12,6\}*144c$ &  & 12& $ 6 $ & 6 & N  \\
12&   $\{12,6\}*144d$ & $ (\{3,6\}_{(2,2)} )^\pi$ &3&  4  & 6 & Y \\
\hline
14&    $\{4,14\}*392$ & $(\{4,4\}_{(7,0)})^{\pi \delta}$& 4& 14&   14 & N\\ 
14&   $\{14,7\}*196$ & &14& 14  & 14 & N \\
\hline
15&   $\{3,10\}*300$ & $\{3,10\}_6$& 6& 10&10 & Y   \\
15& $\{5,4\}*120$ &  &6& 6 & 3 & Y  \\
15& $\{6,4\}*120$ & &5&3& 6 & Y   \\
15& $\{6,10\}*300$ &  $\{6,10\}_3$  & 3&10 & 10 & Y     \\
\hline

\end{tabular}
\caption{The vertex-faithful regular polyhedra with 12, 14, or 15 vertices.
\label{tab:small2}}
\end{center}
\end{table}


\section{Polyhedra with a prime number of vertices}	
\label{sec:prime}

	In this section, we will fully classify the regular polyhedra with a prime number of vertices.

\subsection{Flat polyhedra}	
	
	If $\calP$ is a flat regular polyhedron with $b$ vertices, then it has type
	$\{b, q\}$ for some $q$. When $b = 2$, the polyhedron $\{2, q\}$ is a flat polyhedron with $2$
	vertices for every $q \geq 2$ (including $q = \infty$). If $b$ is an odd prime,
	then there are exactly two flat orientably regular polyhedra
	with $b$ vertices  \cite[Thms. 3.3 and 3.4]{tight2}. 
	The first is the universal polyhedron of type $\{b, 2\}$. The second
	has type $\{b, 2b\}$, and the automorphism group is the quotient of $[b, 2b]$ by the extra relation
	$(\rho_0 \rho_1 \rho_2 \rho_1 \rho_2)^2 = 1$. This is equivalent to the relation $\s_1 \s_2^{-1} \s_1 \s_2^3 = 1$,
	which yields $\s_2^{-1} \s_1 = \s_1^{-1} \s_2^{-3}$. Thus the flat regular polyhedron of type
	$\{b, 2b\}$ has automorphism group $\Lambda(b,2b)_{-1,-3}$ (see Section~\ref{sec:flat}).

	Additionally, the only flat non-orientably regular polyhedron with a prime number of vertices is the hemi-octahedron $\{3,4\}_3$~\cite[Thm. 5.10]{tight3}.
	
	All of the flat regular polyhedra with a prime number of vertices are summarized in Table~\ref{tab:flat-b}.
	We list the relations needed to define the automorphism group as a quotient of the string Coxeter group
	$[p, q]$ that is indicated by the type.
	Note that none of the flat regular polyhedra with a prime number of vertices is vertex-faithful.
	Indeed, in every case, $\rho_2$ fixes every vertex.
	
	\begin{table}
	\begin{center} \begin{tabular}{|l|l|l|l|l|} \hline
	Type & Relations & Orientable & Vertex-faithful & Notes \\ \hline
	$\{2, q\}$ & & Y & N & $2 \leq q \leq \infty$ \\ \hline
	$\{3, 4\}$ & $(\rho_0 \rho_1 \rho_2)^3 = 1$ & N & N & \\ \hline
	$\{b, 2\}$ & & Y & N & \\ \hline
	$\{b, 2b\}$ & $\s_2^{-1} \s_1 = \s_1^{-1} \s_2^{-3}$ & Y & N & \\ \hline
	\end{tabular}
	\caption{Flat regular polyhedra with a prime number of vertices $b$}
	\label{tab:flat-b}
	\end{center}
	\end{table}

\subsection{Non-flat polyhedra}	
	
	To classify non-flat regular polyhedra with $b$ vertices, we start by looking for vertex-faithful
	polyhedra (bearing in mind Corollary \ref{cor:no-vert-faithful}).
	
	\begin{proposition} \label{prop:no-cyclic}
	Suppose that $\calP$ is a non-flat, vertex-faithful regular polyhedron with a prime number of vertices $b$. Then:
	\begin{enumerate}
	\item $q \leq b-1$.
	\item $\G(\calP)$ does not have a normal Sylow $b$-subgroup.
	\item $\G(\calP)$ acts doubly transitively on the vertices.
	\end{enumerate}
	\end{proposition}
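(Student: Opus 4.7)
My approach splits by part, leveraging the earlier results of the paper wherever possible.

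For part (a), the plan is to invoke Proposition~\ref{prop:qorientable} directly. In the orientable case it already yields $q < v = b$. In the non-orientable case it yields only $q \leq b$, but the equality case is completely classified by Corollary~\ref{cor:vq}: one must have $\calP \cong (\{4,4\}_{(q/2,0)})^{\pi\delta}$ with $q \geq 6$ and $q/2$ odd. That forces $v = q \equiv 2 \pmod 4$, so $b = v$ would have to be an even prime, i.e.\ $b = 2$ and $q = 2$, contradicting $q \geq 6$. Hence $q \leq b-1$ in every case.

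For part (b), I plan to argue by contradiction. Suppose $N \trianglelefteq \G(\calP)$ is a Sylow $b$-subgroup. Transitivity of $\G(\calP)$ on the $b$ vertices forces $b \mid |\G(\calP)|$, so $N$ is non-trivial; its orbits form a $\G(\calP)$-block system of common size $1$ or $b$. Size $1$ would mean $N$ acts trivially on vertices, contradicting vertex-faithfulness, so $N$ is transitive and in fact regular, giving $|N| = b$. Since $N$ is normal of prime order, $\G(\calP)$ lies inside the normalizer of $N$ in $\Sym(b)$, which is the affine group of order $b(b-1)$; hence $\G(\calP)/N$ embeds into $\textrm{Aut}(N) \cong (\Z/b\Z)^*$, a cyclic group. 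The vertex stabilizer $H = \langle \rho_1, \rho_2 \rangle$ meets $N$ trivially (as $N$ acts regularly), and $|H||N| = 2qb = |\G(\calP)|$ gives $\G(\calP) = HN$, so $H \cong \G(\calP)/N$ must be cyclic. But $H$ is dihedral of order $2q$, which is cyclic only for $q \leq 1$; since $\calP$ is non-flat we have $q \geq 3$, the contradiction we want.

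Part (c) then follows immediately from Burnside's theorem (Theorem~\ref{prop:group2}): the action is transitive of prime degree and has no normal Sylow $b$-subgroup, so it must be doubly transitive. The main subtlety I anticipate is the bookkeeping in part (b) -- verifying $|N| = b$ rather than a higher power of $b$, and $H \cap N = \{1\}$ -- both of which rest on the fact that a transitive $b$-group on $b$ points is regular. Once those observations are in hand, the rest is a short chain of citations (Proposition~\ref{prop:qorientable}, Corollary~\ref{cor:vq}, Theorem~\ref{prop:group2}) together with one short computation inside the affine normalizer.
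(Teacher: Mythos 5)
Your proposal is correct, but parts (a) and (b) take genuinely different routes from the paper (part (c) is identical: both just quote Burnside). For (a), the paper gets $q \neq b$ in one line from parity: $|\G(\calP)| = 4e = 2qb$ with $b$ an odd prime forces $q$ to be even, hence $q \leq b-1$. You instead rule out the equality case $q = v$ by citing Corollary~\ref{cor:vq}, whose conclusion $q \equiv 2 \pmod 4$ and $q \geq 6$ is incompatible with $q = b$ prime; this works but leans on the much heavier machinery of Section~3, whereas the paper's parity count is self-contained (and incidentally records that $q$ is even). For (b), the paper argues at the level of elements: normality forces $\rho_1$ and $\rho_2$ to conjugate the $b$-cycle $\pi$ to $\pi^{\pm 1}$, and in each case some nontrivial element ($\rho_1$, $\rho_2$, or $\rho_1\rho_2$) is shown to fix every vertex, contradicting vertex-faithfulness. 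You instead pass to the structural fact that the normalizer of $\langle \pi \rangle$ in $\Sym(b)$ is $\mathrm{AGL}(1,b)$, deduce $\G(\calP) = N \rtimes H$ with $H \cong \G(\calP)/N$ cyclic, and contradict the fact that $H = \langle \rho_1, \rho_2 \rangle$ is dihedral of order $2q \geq 6$. Your version is more conceptual and isolates the obstruction cleanly (a point stabilizer in an affine group of prime degree must be cyclic, but here it is dihedral), at the cost of invoking the standard facts that a transitive $b$-subgroup of $\Sym(b)$ is regular and that the centralizer of a $b$-cycle is the cyclic group it generates; the paper's version is longer but entirely elementary. Both are sound.
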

	
	\begin{proof}
	By Proposition~\ref{prop:qorientable}, $q \leq b$. Since $|\G(\calP)| = 2qb = 4e$, it follows that $q$ is even and thus $q \leq b-1$.
	This proves part (a).
	
	Since $b$ does not divide $q$, any Sylow $b$-subgroup of $\G(\calP)$ is cyclic and of order $b$. 
	Let $\pi$ be a $b$-cycle, and suppose that $\langle \pi \rangle$ is normal. 
	Then $\rho_1 \pi \rho_1= \pi^j$ and $\rho_2 \pi \rho_2 =\pi^k$ where $k^2=j^2 \equiv 1$ (mod $b)$. Since $b$ is prime, $j$ and $k$ are each $\pm 1$, which is to say that conjugating $\pi$ by $\rho_1$ or $\rho_2$ either fixes $\pi$ or inverts $\pi$. Label the vertices so that $\pi$ acts on them as a cycle $(0,1,2,3, \ldots, b-1)$ where $0$ is the base vertex.  Suppose that for $i=1$ or $i=2$ that $\rho_i \pi \rho_i = \pi$. Then since $\rho_i$ fixes $0$, it follows that for all $j \in \{0,\ldots, b-1\}$, 
	\[ j \rho_i = 0 \pi^j \rho_i = 0 \rho_i \pi^j = 0 \pi^j = j.\]  
	Thus $\rho_i$ would fix all vertices, which would imply that $\calP$ is not vertex-faithful. So $\rho_1 \pi \rho_1 = \rho_2 \pi \rho_2 = \pi^{-1}$. But then $(\rho_1 \rho_2) \pi (\rho_2 \rho_1) = \pi$, and by the same argument as for $\rho_1$ and $\rho_2$, this implies that $\rho_1 \rho_2$ fixes every vertex and that $\calP$ is not vertex-faithful. So $\langle \pi \rangle$ must not be normal.
	
	Part (c) follows from part (b) and Theorem~\ref{prop:group2}.
	\end{proof}
	
	\begin{lemma} \label{lemma:nonflatprime}
	There are no non-flat, vertex-faithful regular polyhedra with a prime number of vertices.
	\end{lemma}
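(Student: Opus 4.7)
My plan is to combine the three conclusions of \pref{no-cyclic}---namely $q \leq b-1$, no normal Sylow $b$-subgroup, and a $2$-transitive action on the vertex set---to exhaust all possibilities. The case $b = 2$ is immediate: \pref{qorientable} gives $q \leq 2$, but non-flatness requires $q \geq 3$. So I may assume $b$ is an odd prime, in which case \pref{no-cyclic}(a) already forces $q$ to be even.

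I next translate $2$-transitivity into a numerical constraint: since $b(b-1)$ must divide $|\G(\calP)| = 2qb$, we get $(b-1) \mid 2q$, and combined with $q \leq b-1$ the only possibilities are $q = (b-1)/2$ and $q = b-1$. I will handle these two cases separately.

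In the first case, $|\G(\calP)| = b(b-1)$ and the point stabilizer $\langle \rho_1, \rho_2 \rangle$ has order $b-1$; by $2$-transitivity it acts regularly on the remaining $b-1$ vertices, so pointwise stabilizers of two distinct vertices are trivial and $\G(\calP)$ is a Frobenius group. Frobenius's theorem then produces a normal subgroup of order $b$, contradicting \pref{no-cyclic}(b). In the second case, $|\G(\calP)| = 2b(b-1)$, and I apply a Sylow count: the number $n_b$ of Sylow $b$-subgroups divides $2(b-1)$ and is congruent to $1$ modulo $b$. If $n_b > 1$ then $n_b \geq b+1$, and since $(b+1) \mid 2(b-1) = 2(b+1) - 4$ forces $(b+1) \mid 4$, we get $b = 3$. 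For $b \geq 5$ this means $n_b = 1$, again contradicting \pref{no-cyclic}(b); while for $b = 3$, $q = b-1 = 2$ makes $\calP$ of type $\{p,2\}$ and hence flat, contradicting the hypothesis.

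The main subtlety is avoiding any appeal to the classification of finite simple groups, since the classification of $2$-transitive groups of prime degree traditionally uses CFSG. Fortunately both cases above succumb to elementary tools---Frobenius's theorem and Sylow counting---so no such appeal is needed, and the only place where an ``arithmetic accident'' could cause trouble, namely $b = 3$ in the second case, is precisely the place where non-flatness kicks in to rescue the argument.
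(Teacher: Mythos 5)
Your proof is correct and follows essentially the same route as the paper's: both rest on the three parts of \pref{no-cyclic}, the divisibility $(b-1) \mid 2q$ coming from double transitivity, and a Sylow count that forces $b = 3$, whence $q = 2$ and flatness. The only cosmetic difference is your case split on $2q = b-1$ versus $2q = 2(b-1)$ and the appeal to Frobenius's theorem in the first case, which is avoidable since there $n_b$ divides $2q = b-1 < b+1$ and so $n_b = 1$ directly.
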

	
	\begin{proof}
	Suppose $\calP$ is a non-flat, vertex-faithful regular polyhedron with a prime number of vertices $b$.  Let $\calP$ be of type $\{ p, q\}$ and thus $| \G(\calP) | = 2bq$.
	Since $\calP$ is not flat, $b \geq 3$.
	By Proposition~\ref{prop:no-cyclic}(c), $\G(\calP)$ acts doubly transitively on the vertices, and so
	$\langle \rho_1, \rho_2 \rangle$ (which is the stabilizer of the base vertex) acts transitively on
	the remaining $b-1$ vertices; in particular, $b-1$ divides $2q$.
	
	Let $n$ be the number of Sylow $b$-subgroups. By Proposition~\ref{prop:no-cyclic}(b), $n \neq 1$, and by the Sylow Theorems, $n \equiv 1$ (mod $b$) and $n$ divides $2q$. Since $q \leq b-1$ by
	Proposition~\ref{prop:no-cyclic}(a), this implies that $n = b+1 = 2q$. But then $b-1$ divides
	$b+1$, which is only true if $b = 3$. In that case we get $q = 2$, which yields a flat polyhedron, and indeed such a polyhedron is not vertex-faithful after all.
	\end{proof}

	\begin{theorem} \label{thm:prime-verts}
	Every regular polyhedron with a prime number of vertices $b$ is one of the flat polyhedra of type $\{b, q\}$
	included in Table~\ref{tab:flat-b}.  In particular, for each prime $b \geq 5$,  up to isomorphism there are exactly two regular polyhedra with $b$ vertices. 
	\end{theorem}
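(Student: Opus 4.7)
The plan is to combine the prior results essentially mechanically: \cref{no-vert-faithful} reduces the problem to classifying the flat polyhedra with $b$ vertices together with the vertex-faithful non-flat ones, and \lref{nonflatprime} eliminates the latter. So the first step is to invoke \lref{nonflatprime} to conclude that there are no non-flat vertex-faithful regular polyhedra with $b$ vertices, then apply \cref{no-vert-faithful} to upgrade this to the statement that there are no non-flat regular polyhedra at all with $b$ vertices. Hence any regular polyhedron with $b$ vertices is flat.

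Next I would invoke the classification of flat regular polyhedra summarized in Table~\ref{tab:flat-b}: the only flat regular polyhedra with a prime number of vertices are $\{2,q\}$ for $q \ge 2$ (when $b=2$), the hemi-octahedron $\{3,4\}_3$ (when $b=3$), and $\{b,2\}$ and the polyhedron of type $\{b,2b\}$ with defining relation $\s_2^{-1}\s_1 = \s_1^{-1}\s_2^{-3}$ (for every odd prime $b$). This gives the first statement of the theorem directly.

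For the counting statement, I would restrict to a prime $b \ge 5$. In this case the entry $\{2,q\}$ is irrelevant, and the hemi-octahedron has $3$ vertices rather than $b$, so the only surviving entries of Table~\ref{tab:flat-b} are $\{b,2\}$ and $\{b,2b\}$. These are distinct (they have different Schl\"afli types), which yields exactly two regular polyhedra up to isomorphism.

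There is no real obstacle: the heavy lifting is already done in \lref{nonflatprime} (which rules out vertex-faithful non-flat examples via the double-transitivity and Sylow count) and in the classification of flat regular polyhedra imported from~\cite{tight2,tight3}. The only thing to be careful about is verifying that each row of Table~\ref{tab:flat-b} with $b$ odd prime truly produces a regular polyhedron with exactly $b$ vertices (rather than fewer through a degeneracy) and that no two rows give isomorphic polyhedra; both points are immediate from the tabulated Schl\"afli types.
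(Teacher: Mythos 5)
Your proposal is correct and follows essentially the same route as the paper: the paper's proof is exactly the two-step combination of Lemma~\ref{lemma:nonflatprime} with Corollary~\ref{cor:no-vert-faithful} to exclude non-flat polyhedra, followed by the classification of flat polyhedra recorded in Table~\ref{tab:flat-b}. Your additional remarks on the count for $b \geq 5$ (discarding the $\{2,q\}$ and $\{3,4\}_3$ entries and distinguishing $\{b,2\}$ from $\{b,2b\}$ by Schl\"afli type) are sound and merely make explicit what the paper leaves implicit.
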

	
	\begin{proof}
	From Lemma~\ref{lemma:nonflatprime} and Corollary~\ref{cor:no-vert-faithful} we conclude that there are no non-flat regular polyhedra with a prime number of vertices.  Table~\ref{tab:flat-b} describes all such flat regular polyhedra.
	\end{proof}


\section{Polyhedra with twice a prime number of vertices }
\label{sec:twiceprime}

	We start with a useful result about vertex-faithful regular polyhedra with $2b$ vertices.  Next, we will consider the flat regular polyhedra.  The vertex-faithful regular polyhedra will be broken down by the action of the automorphism groups on the vertices.  Finally,  we conclude this section with a classification of the regular polyhedra with twice a prime number of vertices.

\begin{proposition}\label{prop:newqb}
Suppose that $\calP$ is a vertex-faithful regular polyhedron of type $\{p, q\}$ with $2b$ vertices, with $b$ a prime at least $5$,
and with $q < 2b$. Then $q = b$ if and only if $\G(\calP)$ has a normal $b$-subgroup.
Furthermore, if $\calP$ is non-flat, then $q \neq b$ and $\G(\calP)$ does not have a normal $b$-subgroup.

\end{proposition}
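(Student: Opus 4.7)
The plan is to prove the biconditional first and then the ``furthermore'' part. For the forward direction, if $q = b$ then $|\G(\calP)| = 4b^2$, so any Sylow $b$-subgroup has order $b^2$, and the number of them divides $4$ and is congruent to $1 \pmod b$; since $b \geq 5$ this forces the number to be $1$, giving a unique normal Sylow $b$-subgroup.

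For the reverse direction, suppose $\G(\calP)$ has a nontrivial normal $b$-subgroup $N$. Since $q < 2b$ and $b$ is prime, either $b \mid q$ (in which case $q = b$ and we are done) or $b \nmid q$ and $|N| = b$; I aim to rule out the latter. Writing $N = \langle \pi \rangle$, vertex-faithfulness forces $N$ to have no fixed vertex (else normality would make $N$ fix every vertex), so $N$ has two orbits $B_1 \ni 0$ and $B_2$, each of size $b$. Each $\rho_i$ conjugates $\pi$ to $\pi^{\pm 1}$. If some $\rho_i \in \{\rho_1, \rho_2\}$ centralizes $\pi$, then since the centralizer in $\Sym(B_1)$ of the $b$-cycle $\pi|_{B_1}$ is cyclic of order $b$ and $\rho_i$ fixes the base vertex, its restriction to $B_1$ is trivial; on $B_2$ the restriction lies in $\langle \pi|_{B_2}\rangle$ yet has order at most $2$, so it is trivial too, contradicting $\rho_i \neq 1$. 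Otherwise both $\rho_1, \rho_2$ invert $\pi$, so $\s_2 = \rho_1 \rho_2$ centralizes $\pi$; the same argument gives $\s_2|_{B_1} = 1$, while $\s_2|_{B_2}$ lies in $\langle \pi|_{B_2}\rangle$ and has order dividing $\gcd(q, b) = 1$, forcing $\s_2 = 1$ and $q = 1$, absurd.

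For the ``furthermore'' statement, assume $q = b$ and that $\calP$ is non-flat; I will deduce $p = 2b = v$, which makes $\calP$ flat, yielding the contradiction. The forward direction provides a normal Sylow $b$-subgroup $P$ of order $b^2$. I first rule out $P \cong \Z/b^2$: its only subgroup of order $b$ is $\langle \s_2 \rangle$, which would then serve as the $P$-stabilizer of a vertex in each of the two $P$-orbits, forcing $\s_2$ to fix every vertex. Hence $P \cong (\Z/b)^2$, and because the $P$-stabilizer of a vertex in the orbit opposite to the base vertex must be a different cyclic subgroup (else the same contradiction), $\s_2$ fixes $B_1$ pointwise but acts nontrivially on $B_2$; since $\rho_0$ swaps the two orbits, $\rho_0 \s_2 \rho_0$ acts nontrivially on $B_1$. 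Using $\rho_0 \rho_2 = \rho_2 \rho_0$, compute
\[ \rho_0 \s_2 \rho_0 \;=\; \rho_0 \rho_1 \rho_0 \rho_2 \;=\; \s_1^2 \s_2, \]
so $\s_1^2 = (\rho_0 \s_2 \rho_0)\s_2^{-1}$ lies in $P \setminus \langle \s_2 \rangle$ and therefore has order $b$. On the other hand $\G(\calP)/P$ has order $4$ and is generated by $\overline{\rho_0}$ together with $\overline{\rho_1} = \overline{\rho_2}$ (since $\overline{\s_2} = 1$); these generators must be distinct or the quotient would have order at most $2$, so $\overline{\s_1}$ is a nontrivial involution. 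Hence $\s_1 \notin P$ while $\s_1^2 \in P$, so $|\s_1|$ divides $2b$ but not $b$, forcing $p = |\s_1| = 2b$.

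The main obstacle is the subcase analysis in the reverse iff direction, where I must carefully combine the cyclicity of the centralizer of a $b$-cycle with parity or divisibility arguments to contradict vertex-faithfulness in each case. The identity $\rho_0 \s_2 \rho_0 = \s_1^2 \s_2$ and the $\Z/2 \times \Z/2$ structure of $\G(\calP)/P$ are the key levers for the ``furthermore'' step.
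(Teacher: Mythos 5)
Your proof is correct. The biconditional is argued essentially as in the paper: the forward direction is the same Sylow count, and the reverse direction uses the same decomposition into two $\langle\pi\rangle$-orbits of size $b$ and the same dichotomy (each of $\rho_1,\rho_2$ centralizes or inverts $\pi$), differing only in bookkeeping --- you localize via the fact that the centralizer of a $b$-cycle in $\Sym(B_i)$ is $\langle \pi|_{B_i}\rangle$ and derive a contradiction from $b \nmid q$, where the paper shows directly that $\s_2$ restricts to a nontrivial power of $\pi$ on $B_2$ and hence has order $b$. The ``furthermore'' part is where you genuinely diverge. The paper's route is a short counting argument: $\rho_0$ swaps the two blocks, so every relator has evenly many $\rho_0$'s and $p$ is even; the only even divisor of $4b^2$ strictly between $2$ and $2b$ is $4$, which is incompatible with $pf = 2b^2$ for odd $b$. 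You instead pin down the structure of the normal Sylow subgroup $P$ (necessarily elementary abelian, with $\langle\s_2\rangle$ and the stabilizer of a vertex of $B_2$ as two distinct order-$b$ subgroups) and use the identity $\rho_0\s_2\rho_0 = \s_1^2\s_2$ together with $\G(\calP)/P \cong \Z/2\times\Z/2$ to compute $p = 2b$ exactly, whence $\calP$ is tight and therefore flat. Your version is longer but buys more: it shows that any vertex-faithful regular polyhedron with $2b$ vertices and $q = b$ has type $\{2b,b\}$, which anticipates the paper's later identification of the unique flat vertex-faithful example $\Lambda(2b,b)_{3,1}$. The only expository gaps are the unproved (but standard, and likewise asserted without proof in the paper) facts that the $P$-orbits form two blocks of size $b$ and that $\rho_0$ must interchange them; both follow immediately from faithfulness and transitivity.
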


\begin{proof}
Suppose that $q = b$, and let $n$ be the number of Sylow $b$-subgroups. Then $|\G(\calP)| = 4b^2$, and so $n$ divides $4$
and $n \equiv 1$ (mod $b$). If $b \geq 5$, then $n = 1$ and the Sylow $b$-subgroup is normal.

Conversely, suppose that $\G(\calP)$ has a normal $b$-subgroup $S$. Since $|\G(\calP)| = 4qb$, if $S$ has order
greater than $b$, then $b$ divides $q$ and since $q < 2b$ that means $q = b$. So let us assume that $S = \langle \pi \rangle$
with $\pi$ an element of order $b$.

The orbits of $\langle \pi \rangle$ form a block system on the $2b$ vertices, and since $\calP$ is vertex-faithful,
there must be two blocks of size $b$. Let $B_1$ be the block containing the base vertex, and $B_2$ the other block.
Then we can argue similarly to the proof of Proposition~\ref{prop:no-cyclic} as follows.
Since $S$ is normal, $\rho_1$ and $\rho_2$ each either commute with $\pi$ or invert it.
Note that since the blocks have odd size, both $\rho_1$ and $\rho_2$ must fix a vertex in each block. Then if either
$\rho_i$ commutes with $\pi$, it follows that $\rho_i = \pi^{-k} \rho_i \pi^k$ for all $k$, which would demonstrate
that $\rho_i$ fixes all vertices, violating faithfulness of the action. So $\rho_1$ and $\rho_2$ both invert
$\pi$, implying that $\rho_1 \rho_2$ commutes with $\pi$. Since $\rho_1 \rho_2$ fixes one of the vertices
in $B_1$, it must fix $B_1$ pointwise (by the same argument as for $\rho_i$). The only way to maintain
a faithful action is if $\rho_1 \rho_2$ does not fix any point of $B_2$. Then if $\rho_1 \rho_2$ sends some
vertex $w$ to $w \pi^k$, it must send every $w \pi^i$ to $w \pi^{i+k}$. Thus $\rho_1 \rho_2 = \pi^{k-1}$ of
order $b$, and so $q = b$.

Thus  $q = b$ if and only if $\G(\calP)$ has a normal $b$-subgroup.

Assume now that $\calP$ is non-flat and that $q=b$.  We have just seen that this implies $\G(\calP)$ has a normal $b$-subgroup.
This normal subgroup acts on the vertices with two blocks of size $b$.   Since $\rho_0$ interchanges these blocks, this implies that every relation in $\G(\calP)$ has an even number of occurrences of $\rho_0$. In particular, $p$ is even.

The order of $\G(\calP)$ is $2qv = 4b^2$, and so $p$ is a proper even divisor of $4b^2$.
If $\calP$ is not flat, then $2 < p < 2b$, and so the only possible value of $p$ is 4.  However, the order of $\G(\calP)$ is also $2pf$ where $f$ is the number of facets of $\calP$, and thus 
$pf = 2b^2$, and so this is also impossible. 

\end{proof}

\subsection{Flat polyhedra}

	Using Proposition~\ref{prop:flat-classification} and Proposition~\ref{prop:flat-core-free}, we can classify the flat orientably regular polyhedra with $2b$ vertices (that is, of type $\{2b, q\}$) where $b$ is prime. If $b$ is an odd prime,
 	then there are only two possibilities for $\Lambda(2b,q')_{i,1}$ with $\langle \s_2 \rangle$ core-free:
	$\Lambda(2b, 2)_{-1,1}$ ($i = -1$) and $\Lambda(2b, b)_{3,1}$ ($i = 3$). The three possibilities
	for $\Lambda(p',q)_{-1,j}$ where $p'$ divides $p$ and $\langle \s_1 \rangle$ is core-free are:
	\begin{enumerate}
	\item $\Lambda(2, q)_{-1,1}$ ($j = 1$),
	\item $\Lambda(b, 2b)_{-1,-3}$. ($q = 2b$ by \cite[Prop. 4.11]{tight3}, $j = -3$), and
	\item $\Lambda(2b, q)_{-1,j}$. ($i = -1$, $j$ depends on $q$).
	\end{enumerate}
	Putting these together, we find the following possible automorphism groups of flat regular polyhedra with $2b$ vertices:
	\begin{enumerate}
	\item $\Lambda(2b, q)_{-1,1}$, with $q$ even,
	\item $\Lambda(2b, q)_{3,1}$, with $q$ divisible by $b$,
	\item $\Lambda(2b, 2b)_{-1,-3}$, and
	\item $\Lambda(2b, q)_{-1,j}$, with $q$ divisible by $2b$ but not $2b^2$, and where $j$ is the unique positive
	integer with $1 \leq j \leq q-1$ that satisfies
	$\frac{j+1}{2} \equiv -1$ (mod $b$) and $\frac{j+1}{2} \equiv 1$ (mod $q/2b$).
	\end{enumerate}
	Of these, only $\Lambda(2b, b)_{3,1}$ yields a vertex-faithful polyhedron.
	

	Now suppose that $b = 2$ (so that $\calP$ has $4$ vertices). The only possibility for $\Lambda(4,q')_{i,1}$ is $\Lambda(4,2)_{-1,1}$.
	There are two possibilities for $\Lambda(p', q)_{-1,j}$; either $\Lambda(2, q)_{-1,1}$ or
	$\Lambda(4, q)_{-1,j}$, and in the second case $q$ is divisible by 8. Thus there are two families;
	one with group $\Lambda(4,q)_{-1,1}$ with $q$ even, and one with group $\Lambda(4,q)_{-1,j}$ with $q$
	divisible by $8$ and $j$ determined by $q$.

	Now we consider the flat non-orientably regular polyhedra. By \cite[Thm. 5.10]{tight3}, the only such polyhedra
	with $2b$ vertices for an odd prime $b$ have type $\{6, 4r\}$, with $r \geq 1$ and odd. Using the dual of
	\cite[Lem. 5.9]{tight3}, we find the following groups:
	\begin{enumerate}
	\item The quotient of $[6, 4]$ by the relations $\s_2^{-1} \s_1 = \s_1^{-1} \rho_1 \s_2^2$ and $\s_2^{-1} \s_1^2 = 
	\s_1^{-2} \s_2$. In fact, we can verify with a CAS that the second relation is uneccessary.
	\item The quotient of $[6, 4r]$ by the relations 
	$\s_2^{-1} \s_1 = \s_1^2 \rho_1 \s_2^{r+1}$ and $\s_2^{-1} \s_1^2 = \s_1^{-2} \s_2^{2r-1}$,
	for $r \equiv 1$ (mod $4$).
	\item The quotient of $[6, 4r]$ by the relations 
	$\s_2^{-1} \s_1 = \s_1^2 \rho_1 \s_2^{-r+1}$ and $\s_2^{-1} \s_1^2 = \s_1^{-2} \s_2^{2r-1}$,
	for $r \equiv 3$ (mod $4$).
	\end{enumerate}
	
	In cases (b) and (c), \cite[Lem. 5.4]{tight3} shows that $\langle \s_2^4 \rangle$ is normal, and so
	for $r > 1$ the corresponding polyhedra are not vertex-faithful. We can check with a CAS that $\langle \s_2 \rangle$
	is core-free when $r = 1$ for the group in case (b).
	
	Finally, there are two families of flat non-orientably regular polyhedra with $4$ vertices (of type $\{4, q\}$), described
	in \cite[Lem. 5.9]{tight3}. The first is the quotient of $[4, 3k]$ by the relations
	$\s_2^{-1} \s_1 = \s_1^2 \rho_1 \s_2$ and $\s_2^{-2} \s_1 = \s_1^{-1} \s_2^2$, and the second is
	the quotient of $[4, 6k]$ by the relations 	$\s_2^{-1} \s_1 = \s_1^2 \rho_1 \s_2^{1+3k}$ and 
	$\s_2^{-2} \s_1 = \s_1^{-1} \s_2^2$. In fact, we can show that the second relation is unnecessary
	to define these groups. By \cite[Prop. 3.2]{tight3}, the quotient of $[4, 3k]$ by the single
	relation $\s_2^{-1} \s_1 = \s_1^2 \rho_1 \s_2$ already causes $\langle \s_2^3 \rangle$ to be normal,
	and taking the quotient by this normal subgroup yields the group of the hemicube, where the second
	relation is unnecessary. (This also demonstrates that if $k > 1$ then this polyhedron is not vertex-faithful
	since $\langle \s_2 \rangle$ has a nontrivial core.)
	A similar argument holds for the quotient of $[4, 6k]$ by the single relation
	$\s_2^{-1} \s_1 = \s_1^2 \rho_1 \s_2^{1+3k}$, where $\langle \s_2^6 \rangle$ is normal.

	We summarize all of the flat regular polyhedra with $2b$ vertices
	in Table~\ref{tab:flat-2b}.

	\begin{table}
	\begin{center} \begin{tabular}{|l|l|l|l|l|} \hline
	Type & Relations & Orientable? & Vertex-faithful? & Notes \\ \hline
		$\{4, q\}$ & $\s_2^{-1} \s_1 = \s_1^{-1} \s_2$ & Y & N & $q$ even \\ \hline
	$\{4, 2^{\alpha} k\}$ & $\s_2^{-1} \s_1 = \s_1^{-1} \s_2^j$ & Y & N & $\alpha \geq 3$, $k$ odd, $\frac{j+1}{2} \equiv 1$ (mod $k$),\\
	&&&& $\frac{j+1}{2} \equiv 2^{\alpha-2}+1$ (mod $2^{\alpha-1}$) \\ \hline
	$\{4, 3k\}$ & $\s_2^{-1} \s_1 = \s_1^2 \rho_1 \s_2$ & N & Y* & (*Vertex-faithful if $k = 1$) \\ \hline
	$\{4, 6k\}$ & $\s_2^{-1} \s_1 = \s_1^2 \rho_1 \s_2^{1+3k}$ & N & N & \\ \hline

	$\{6, 4\}$ & $\s_2^{-1} \s_1 = \s_1^{-1} \rho_1 \s_2^{2}$ & N & N & \\ \hline
	$\{6, 4r\}$ & $\s_2^{-1} \s_1 = \s_1^2 \rho_1 \s_2^{r+1}$ & N & Y* & $r \equiv 1$ (mod $4$). \\
				& $\s_2^{-1} \s_1^2 = \s_1^{-2} \s_2^{2r-1}$ & & & (*Vertex-faithful if $r = 1$) \\ \hline
	$\{6, 4r\}$ & $\s_2^{-1} \s_1 = \s_1^2 \rho_1 \s_2^{-r+1}$ & N & N & $r \equiv 3$ (mod $4$). \\ 
				& $\s_2^{-1} \s_1^2 = \s_1^{-2} \s_2^{2r-1}$ & & & \\ \hline

	$\{2b, q\}$ & $\s_2^{-1} \s_1 = \s_1^{-1} \s_2$ & Y & N & $q$ even \\ \hline
	$\{2b, q\}$ & $\s_2^{-1} \s_1 = \s_1^3 \s_2$ & Y & Y* & $q$ divisible by $b$ \\
	&&&& (*Vertex-faithful if $q = b$) \\ \hline
	$\{2b, 2b\}$ & $\s_2^{-1} \s_1 = \s_1^{-1} \s_2^{-3}$ & Y & N & \\ \hline
	$\{2b, q\}$ & $\s_2^{-1} \s_1 = \s_1^{-1} \s_2^{j}$ & Y & N & $q$ divisible by $2b$ but not $2b^2$, \\
	&&&& $\frac{j+1}{2} \equiv -1$ (mod $b$), $1$ (mod $q/2b$) \\ \hline
	\end{tabular}
	\caption{Flat regular polyhedra with 4 or $2b$ vertices, where $b$ is an odd prime}
	\label{tab:flat-2b}
	\end{center}
	\end{table}


\subsection{Non-flat polyhedra -- the primitive case}

Now we will consider non-flat vertex-faithful regular polyhedra whose automorphism groups have a primitive action on their $2b$ vertices, where $b$ is prime.

When $b \in \{2, 3, 5\}$ there do exist non-flat vertex-faithful regular polyhedra whose automorphism groups act primitively on their $2b$ vertices, for instance $\{3,3\}$, $\{5,5\}_3$, and $\{5,3\}_5$ respectively. 
On the other hand, when $b=7$, there are only two vertex-faithful regular polyhedra; both have automorphism groups which act imprimitively with blocks of 7 vertices.
When $b = 11$, we can use similar techniques as in Section~\ref{sec:smallV} to see that there are two vertex-faithful regular polyhedra with 22 vertices, both of which have automorphism groups acting imprimitively with blocks of size 11.
When $b > 11$, we will use the following result which relies on the classification of finite simple groups and the work of Liebeck and Saxl~\cite{LiebeckSaxl}.

\begin{theorem}[\cite{ChengOxley}, Theorem 1.1]
Let $b$ be a prime number.  A primitive permutation group of degree $2b$ is doubly transitive provided that $b \not = 5$.
\end{theorem}

The results above then lead us to the following theorem.

\begin{theorem}
When $b \geq 7$ is prime, there are no non-flat vertex-faithful regular polyhedra with automorphism groups that act primitively on their $2b$ vertices.
\end{theorem}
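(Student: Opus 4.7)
The plan is to split according to the size of the prime $b$. When $b \in \{7, 11\}$, the claim is already contained in the enumerations mentioned immediately before the theorem: in each case every vertex-faithful regular polyhedron with $2b$ vertices has been tabulated, and all such polyhedra act imprimitively on their vertices. So the substantive case is $b \geq 13$.

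For $b \geq 13$, I argue by contradiction. Suppose $\calP$ is a non-flat vertex-faithful regular polyhedron of type $\{p, q\}$ with $2b$ vertices whose automorphism group acts primitively on the vertex set. The Cheng-Oxley theorem above then upgrades primitivity to 2-transitivity. The vertex stabilizer $\langle \rho_1, \rho_2 \rangle$ has order $2q$ and must act transitively on the remaining $2b - 1$ vertices, so $(2b - 1) \mid 2q$. Combined with the bound $q \leq 2b$ from Proposition~\ref{prop:qorientable} and the observation that $2b - 1$ is odd while $2q$ is even, the only possibility is $q = 2b - 1$, and hence $|\G(\calP)| = 4b(2b - 1)$.

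The decisive step is then a Sylow count at the prime $b$. Since $\calP$ is non-flat and $q = 2b - 1 \neq b$, Proposition~\ref{prop:newqb} forbids a normal Sylow $b$-subgroup, so the number $n_b$ of such subgroups has the form $1 + kb$ with $k \geq 1$. On the other hand, $n_b$ divides $|\G(\calP)|/b = 4(2b - 1)$. Reducing $4(2b - 1)$ modulo $1 + kb$ using $kb \equiv -1$ converts this into the sharper divisibility $(1 + kb) \mid 4(k + 2)$, which in turn yields the inequality $k(b - 4) \leq 7$. For $b \geq 13$ and $k \geq 1$, the left-hand side is at least $9$, a contradiction. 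I expect the Sylow manipulation extracting the clean inequality $k(b - 4) \leq 7$ to be the main obstacle; the rest is bookkeeping around the cited propositions.
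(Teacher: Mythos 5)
Your proof is correct and follows essentially the same route as the paper's: dispose of $b=7,11$ by the enumerations, invoke Cheng--Oxley for $b\geq 13$ to get double transitivity, pin down $q=2b-1$ via the vertex stabilizer and Proposition~\ref{prop:qorientable}, and finish with a Sylow count at $b$ using Proposition~\ref{prop:newqb}. The only (cosmetic) difference is the final arithmetic: you extract $(1+kb)\mid 4(k+2)$ and hence $k(b-4)\leq 7$, whereas the paper solves $8b-4=r(1+tb)$ for $b$ to get $b\leq 11$; both are valid.
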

\begin{proof}
Assume to the contrary that there is a non-flat vertex-faithful regular polyhedron with automorphism group $\Gamma$ acting primitively on $2b$ vertices where $b$ is prime.  The discussion above shows that we may assume $b \geq 13$ and that $\Gamma$ acts doubly transitively.

Since $\Gamma$ acts doubly transitively on a set of size $2b$,
the stabilizer of the base vertex acts transitively on the remaining $2b-1$ vertices; in particular $| \Gamma | = (k)(2b)(2b-1)$.  Additionally $| \Gamma | = 4e$, where $e$ is the number of edges of the polyhedron.  Thus $k$ must be even.  Also $| \Gamma | = (2)(q)(2b)$, and thus $k(2b-1) = 2q$.  When $k > 2$ (and thus $k >3$), this implies that $q > 2b =v$, which contradicts Proposition~\ref{prop:qorientable}. 
Thus $k=2$, $q=2b-1$, and $| \Gamma | = 4b(2b-1)$.

Let $n$ be the number of Sylow $b$-subgroups of $\G$.  We know that $n = 1 + tb$ for some integer $t$, and that $n$ divides $8b-4$. Thus $8b-4 = r(1+tb)$ for some integer $r$. Solving this for $b$ we get $b=\frac{r+4}{8-rt}$.

Furthermore, by Proposition~\ref{prop:newqb}, we know that $t \not = 0$.
Thus, $rt \leq 7$, and in particular $b \leq r +4 \leq 11$.  

 \end{proof}


\subsection{Non-flat polyhedra -- the imprimitive case}
Now we will classify the non-flat vertex-faithful regular polyhedra with twice a prime number of vertices whose automorphism groups act imprimitively on the vertices.  We again assume that $b$ is an odd prime with $b \geq 11$.
Corollary~\ref{cor:vq} provides the unique vertex-faithful regular polyhedron of type $\{p, 2b\}$, so by Proposition~\ref{prop:qorientable}, we may assume for the remainder of this section that $q < 2b$.

\subsubsection{$b$ blocks of $2$ vertices} \label{subsub:bblocks}

	First we will show that there are no such regular polyhedra $\calP$ where the action of $\G(\calP)$ on the vertices is imprimitive with blocks of size 2. 
		Let $\G = \G(\calP)$ and let $B_1$ be the block containing the base vertex of a regular polyhedron of type $\{p,q\}$. In this case, $|\Stab(\G, B_1)| = 4q$.
		Since $q < 2b$, Proposition~\ref{prop:newqb} implies that $b$ does not divide $q$.
		Thus the Sylow $b$-subgroup of $\G$ is cyclic of order $b$, generated by an element
		$\pi$ that permutes the $b$ blocks cyclically. Note that, since the blocks have size $2$ and 
		$\rho_1$ and $\rho_2$ both fix the base vertex, they in fact both fix $B_1$ pointwise. 
		Then by Theorem~\ref{prop:group2} and Proposition~\ref{prop:newqb},
		$\G$ acts doubly transitively on the blocks. 
		In particular, $\Stab(\G, B_1)$ acts transitively on the $b-1$ remaining blocks. Then
		\[ |\Stab(\G, B_1)| = |\Stab(  \Stab(\G, B_1),B_2)| \cdot (b-1), \]
		and since $\rho_2 \in \Stab(  \Stab(\G, B_1),B_2)$, it follows that $2b-2$ divides $4q$.
		
		Let $n$ be the number of Sylow $b$-subgroups. Since $\langle \pi \rangle$ is not normal (by Proposition~\ref{prop:newqb})
		we have $n \neq 1$. Thus, we need for $4q$ to be a multiple of $2b-2$ and of $n$, where
		$n = kb+1$ for some $k \geq 1$. Since $q \leq v-1$, we have $4q \leq 8b-4$. Then $4q = (2b-2)r$ with  $1 \leq r \leq 4$.  Similarly $4q = (kb+1)s$ with $1 \leq k \leq 8$ and $1 \leq s \leq 8$. With a CAS, we find that the only solutions this system of equations
		(with odd prime $b$) occur when $b \leq 11$. We know that there are only two vertex-faithful polyhedra with $22$ vertices: the one of type $\{4, 22\}$ ($(\{4,4\}_{(11,0)})^{\pi \delta}$) and the flat one of type $\{22, 11\}$ described by the second line of Table~\ref{tab:flat-2b}.
		
	Therefore, for all primes $b \geq 5$, there is no non-flat vertex-faithful regular polyhedron with $2b$ vertices where the action of the automorphism group has blocks of size 2.
Additionally it can be checked that when $b=2$, this result also holds.  On the other hand, the octahedron has an automorphism group that acts with 3 blocks of 2 vertices.

	
\subsubsection{$2$ blocks of $b$ vertices}

Here, for all primes at least seven, we show that the non-flat vertex-faithful regular polyhedra from Corollary~\ref{cor:vq} with twice a prime number of vertices is unique.  

\begin{proposition} \label{prop:only-2xb}
For all primes $b \geq 7$, if $\calP$ is a non-flat vertex-faithful regular polyhedron with $2b$ vertices such that the action on vertices is imprimitive with two blocks of size $b$, then $q = 2b$ and $\calP$ is $(\{4,4\}_{(b,0)})^{\pi \delta}$.
\end{proposition}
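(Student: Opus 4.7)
The plan is to show $q=2b$, after which Corollary~\ref{cor:vq} identifies $\calP$ as $(\{4,4\}_{(b,0)})^{\pi\delta}$. Since Proposition~\ref{prop:qorientable} already gives $q\le 2b$, the task reduces to excluding $q<2b$. Let $G=\G(\calP)$, $H=\langle\rho_1,\rho_2\rangle$, $B_1$ the block containing the base vertex $v_0$, and $B_2$ the other block. I would first note that $H$ fixes $v_0$ and hence $B_1$ setwise, so by transitivity on the two blocks $\rho_0$ must swap $B_1\leftrightarrow B_2$; in particular every edge of $\calP$ crosses the bipartition. Writing $|G|=4qb$, the hypothesis $q<2b$ together with $q\ne b$ (from Proposition~\ref{prop:newqb}, since $\calP$ is non-flat) forces the Sylow $b$-subgroup $P=\langle\pi\rangle$ of $G$ to have order exactly $b$; because $b\nmid |H|=2q$, the element $\pi$ acts freely on the vertex set and therefore as a single $b$-cycle on each block.

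Next, I would rule out the possibility that $P$ is normal in the index-$2$ block-kernel $K\triangleleft G$. Under that assumption each of $\rho_1,\rho_2$ either commutes with or inverts $\pi$. If one of these generators commutes with $\pi$, then, being an involution that fixes $v_0$, it fixes all of $B_1$ pointwise, and its restriction to $B_2$ lies in the cyclic centraliser $\langle\pi|_{B_2}\rangle\cong\Z/b$ of the $b$-cycle $\pi|_{B_2}$; an involution in a cyclic group of odd order must be trivial, so the generator would fix all vertices, contradicting vertex-faithfulness. If instead both $\rho_1$ and $\rho_2$ invert $\pi$, then $\sigma_2=\rho_1\rho_2$ commutes with $\pi$, the same argument applied to $\sigma_2$ forces $\sigma_2|_{B_2}\in\langle\pi|_{B_2}\rangle$, and the order $q$ of $\sigma_2$ must divide $b$---contradicting both $q\ne b$ and $q\ge 3$. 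Hence $P$ is not normal in $K$, and the number $n_K$ of Sylow $b$-subgroups of $K$ satisfies $n_K\ge b+1$, $n_K\equiv 1\pmod b$, and $n_K\mid 2q$. A parallel Sylow count on $G$: if $\rho_0\notin N_G(P)$ then $n_G=2n_K\in\{2b+2,\,6b+2\}$, each of which is $\equiv 2\pmod b$, contradicting $n_G\equiv 1\pmod b$; so $\rho_0\in N_G(P)$ and $n_G=n_K$.

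These constraints leave only the four candidate values
\[ q\in\bigl\{\tfrac{b+1}{2},\ b+1,\ \tfrac{3(b+1)}{2},\ \tfrac{3b+1}{2}\bigr\}. \]
To finish, I would eliminate each candidate in turn by combining three ingredients: Proposition~\ref{prop:steves-thm}, which forces the fixed-point count of every $\sigma_2^{q'}$ with $q'\mid q$ and $q'<q/2$ to be a divisor of $2b$ (hence one of $1,2,b$); the embedding $N_G(P)/C_G(P)\hookrightarrow(\Z/b)^{\times}$, which constrains $b\pmod 4$ in several subcases by pinning down $|N_G(P)/C_G(P)|\in\{1,2,4\}$; and the cycle structure of $\sigma_2$ on each block as forced by $\pi$-equivariance and the bipartite geometry. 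For the two small primes $b=7$ and $b=11$ no such analysis is needed: for $b=7$ Tables~\ref{tab:small1} and \ref{tab:small2} list the only two vertex-faithful regular polyhedra on $14$ vertices, one flat and the other equal to $(\{4,4\}_{(7,0)})^{\pi\delta}$, while for $b=11$ the analogous direct enumeration is reported in the discussion preceding this proposition. The main obstacle is this last step: none of the three tools by itself eliminates all four candidate values of $q$ uniformly for $b\ge 13$, so the elimination has to be assembled case-by-case, and the bookkeeping between the parities of $(b\pm 1)/2$ and the corresponding dihedral subgroup structures of $H$ is where the most care is required.
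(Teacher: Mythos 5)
Your reduction to showing $q=2b$ (and then invoking Corollary~\ref{cor:vq}) matches the paper's strategy, and several ingredients overlap (Proposition~\ref{prop:newqb}, Proposition~\ref{prop:steves-thm}, the block-swapping role of $\rho_0$). But the proof is not complete: after the Sylow count you are left with the four candidate values $q\in\{(b+1)/2,\,b+1,\,3(b+1)/2,\,(3b+1)/2\}$, and you explicitly state that none of your three proposed tools eliminates them uniformly for $b\ge 13$ and that the elimination ``has to be assembled case-by-case'' --- but you never assemble it. That unfinished elimination is the entire content of the proposition for large $b$, so as written this is a genuine gap, not a routine verification left to the reader. (A smaller issue: the claim that $\rho_0\notin N_G(P)$ implies $n_G=2n_K$ is not quite right --- $N_G(P)$ could meet $G\setminus K$ in an element other than $\rho_0$ --- though this does not affect the conclusion $n_G=n_K$ that you actually use.)

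The paper avoids Sylow counting on $q$ altogether and instead analyzes the orbit structure of $\s_2$ on the second block $B_2$. The key steps are: (i) the fixed-point set of the vertex stabilizer $\langle\rho_1,\rho_2\rangle$ is a block, so it has size $1$ or $b$ (size $2$ having been excluded in the $b$-blocks-of-$2$ case, and $2b$ by faithfulness); (ii) by the normalizer argument from the proof of Proposition~\ref{prop:steves-thm}, if $\langle\s_2^{q'}\rangle$ with $q'<q/2$ fixes any vertex of $B_2$ then its fixed-point set is even, hence of size exactly $2$ --- which step (i) rules out for $q'=1$, so $\s_2$ fixes no vertex of $B_2$; (iii) if two $\langle\s_2\rangle$-orbits in $B_2$ had coprime sizes $m<n$, then $\langle\s_2^{m}\rangle$ would fix at least $m\ge 2$ vertices of $B_2$ plus the base vertex, contradicting (ii); hence all orbit sizes in $B_2$ share a common factor, and since they are all at least $2$ and sum to the prime $b$, the block $B_2$ is a single $\s_2$-orbit of size $b$. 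Therefore $b\mid q$, and since $q\ne b$ and $q\le 2b$, you get $q=2b$ directly. I would recommend replacing your candidate-value elimination with this orbit argument, or at minimum carrying out the case analysis you describe in full.
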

	
\begin{proof}
Let $\Gamma = \G(\calP)$ and suppose that $\Gamma$ acts faithfully and imprimitively on the $2b$ vertices with two blocks each with $b$ vertices.  Let the base vertex $u$ be in the block $B_1$.  The set of fixed points of $\G_0:=\langle \rho_1, \rho_2 \rangle = \Stab(\Gamma,u)$ forms a block for $\Gamma$, so the number of fixed points of $\G_0$ is either 1, 2, $b$, or $2b$.  The number of fixed points for $\G_0$ cannot be $2b$ or $\Gamma$ would not act faithfully.  
If the number of of fixed points is 2, then there is a block system for $\Gamma$ consisting of $b$ blocks each of size 2; this was ruled out in subsection~\ref{subsub:bblocks}.  

Let $q'$ be a divisor of $q$ and let $H = \langle \s_2^{q'} \rangle$. Recall from the proof of Proposition~\ref{prop:steves-thm} that if $q' < q/2$, then $ \langle \s_2^{q'} \rangle$ fixes a vertex $u \alpha$ if and only if $\alpha$ normalizes $H$. 

Suppose that $H$ fixes a vertex $u \alpha$ in $B_2$. Then $H \alpha = \alpha H$, which implies that $H$ fixes a vertex $w$ if and only if it fixes $w \alpha$. Since $u$ is in $B_1$ and $u \alpha$ is in $B_2$, that means that $\alpha$ interchanges $B_1$ with $B_2$, and so the number of vertices fixed by $H$ must be even. By Proposition~\ref{prop:steves-thm} and the fact that $\calP$ is vertex-faithful, this implies that either $H$ fixes exactly two vertices, or that $q' = q/2$.

Consider what this implies when $q'=1$ and  $H = \langle \s_2 \rangle$.
If $\s_2$ fixes a vertex $v$, then either both $\rho_1$ and $\rho_2$ fix $v$, or there is a distinct vertex $w = (v) \rho_1 = (v) \rho_2$.  
If the number of fixed points of $H$ is 2, then since $\G_0$ fixes the base vertex $u$,  then $\G_0$ also fixes the other fixed point of $H$.  However, we have already ruled out the case where $\G_0$ has 2 fixed points, so the number of fixed points of $H$ cannot be 2, and thus $\s_2$ cannot fix any vertex in $B_2$

Therefore, the orbits of vertices in $B_2$ under $ \langle \s_2 \rangle$ all have size at least 2. Suppose that there are two orbits of coprime size $2 \leq m < n$, where $m$ and $n$ are divisors of $q$. Note that $m \neq q/2$ since if $m = q/2$, then $n > m$ would force $n = q$, in which case $m$ and $n$ are not coprime. The above argument shows that $\langle \s_2^m \rangle$
fixes exactly two vertices.  However, in addition to fixing the base vertex u, the group $\langle \s_2^m \rangle$ fixes at least $m$ vertices of $B_2$ that are in the same $ \langle \s_2 \rangle$ orbit. Therefore, the orbit sizes in $B_2$ all have a common factor. Since there are no orbits of size 1 and the block has prime size $b$, it follows that the orbit must be the entire block $B_2$. Therefore, $b$ divides $q$, and by Proposition~\ref{prop:newqb}, $q \neq b$. Then Proposition~\ref{prop:qorientable} and Corollary~\ref{cor:vq} imply the rest of the claim.
\end{proof}

\subsubsection{Representation of a flat polyhedron}
Let us also examine what happens to the above argument when $q=b$ and thus $\calP$ is flat.  
Consider the vertex CPR graph of $\G$. Since $\s_2$ has prime order $b$ and it fixes one vertex of $B_1$, all the orbits in $B_1$ must have size $1$, which is to say that $\s_2$ fixes $B_1$ pointwise.  Thus, the $b-1$ vertices in $B_1$ other than the base vertex are connected in pairs by parallel edges labeled 1 and 2. This completely determines the induced subgraph on vertices in $B_1$.   
By the argument of Proposition~\ref{prop:only-2xb}, the vertices of $B_2$ form a single $\langle \s_2 \rangle$ orbit, and so 
the induced subgraph on vertices in $B_2$ is an alternating path of length $b$.  
 This leaves only 1 possible vertex CPR graph for $\Gamma$:

$$\includegraphics[height=6cm]{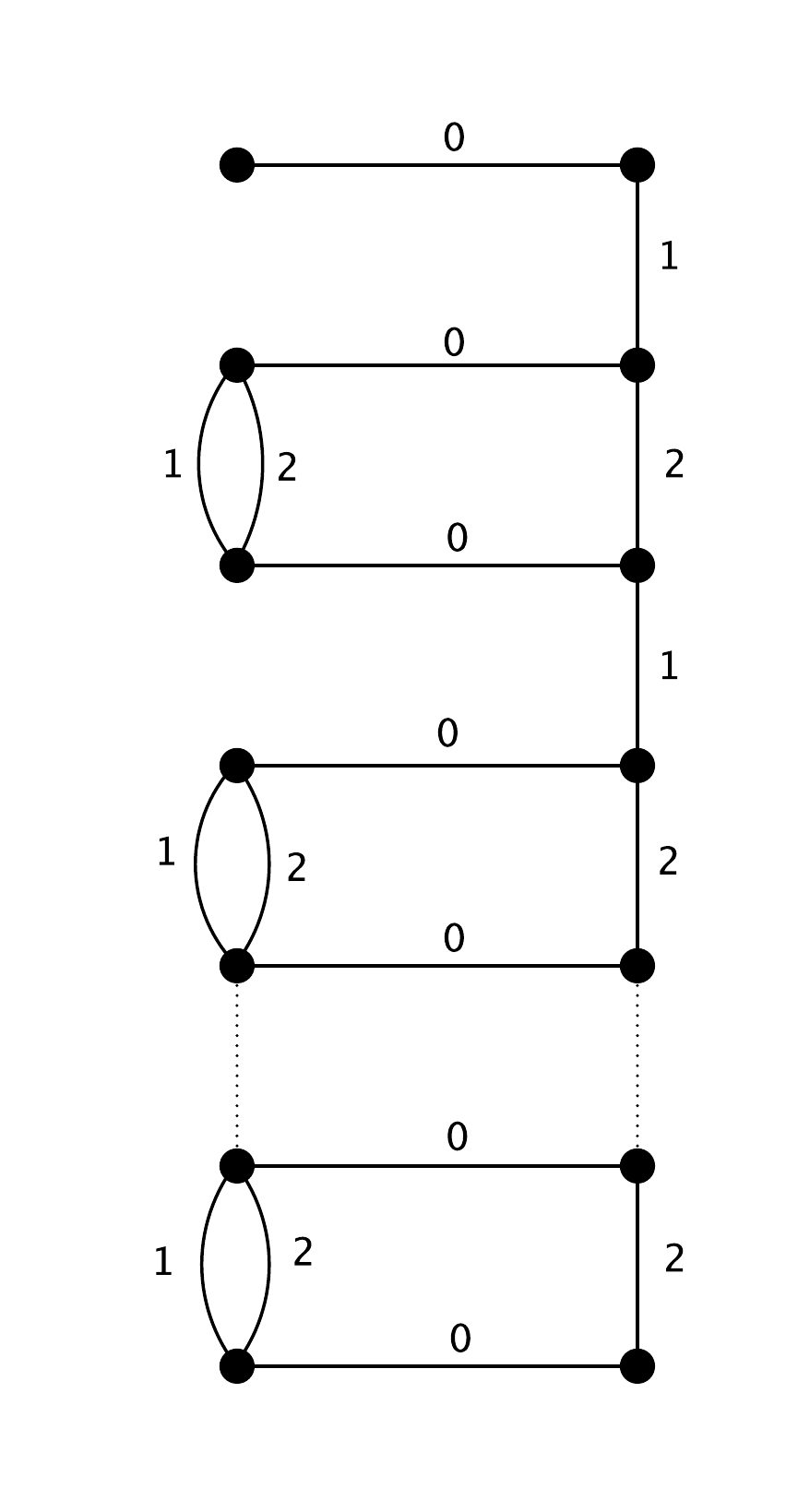}$$

In this case $p$ is equal to $2b$ and $\Gamma$ is the automorphism group of the flat vertex-faithful regular polytope of type $\{2b, b\}$ (see Table~\ref{tab:flat-2b}).

\subsection{Classification of all regular polyhedra with twice a prime number of vertices}

We have seen the classification of the regular flat polyhedra with $2b$ vertices, and we have constructed the unique non-flat vertex-faithful regular polyhedron with $2b$ vertices for each prime $b \geq 7$.   Now we will classify the non-flat regular polyhedra with $4$, $6$, and $10$ vertices. Any such polyhedron covers
one of the vertex-faithful polyhedra in Table~\ref{tab:small1}.

\begin{lemma}
Suppose that $\calP$ is a non-flat regular polyhedron of type $\{p, q\}$ that is not vertex-faithful, 
and let $\calQ$ be the vertex-faithful regular polyhedron of type $\{p, q'\}$ such that $\calP$ covers $\calQ$.
\begin{enumerate}
\item If $\calQ = \{3, 3\}$, then $q = 6$ or $12$.
\item If $\calQ = \{3, 4\}$, then $q = 8, 12$, or $24$.
\item If $\calQ$ is one of the following, then $q = 2q'$: $\{3, 5\}*60, \{5,5\}*60, \{4,6\}*120, \{5,3\}*60, \{5,6\}*120a,  
\{6,6\}*120$.
\end{enumerate}
\end{lemma}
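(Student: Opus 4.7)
I would proceed in order of increasing difficulty: part (c) first, then parts (a) and (b). Throughout, by \pref{fixes-verts}, write $N = \ker \psi = \langle \s_2^{q'} \rangle$, a cyclic normal subgroup of $\G(\calP)$ of order $k := q/q' \geq 2$.

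For part (c), \cref{odd-zigzags} is the right tool. Reading $p$, $|h|$, $|z_1|$, and $|z_2|$ off of \taref{small1} and \taref{small2} for each of the six listed polyhedra, one finds that in every case at least one of $p$ and $|h|$ is odd and at least one of $|z_1|$ and $|z_2|$ is odd. For instance, $\{3,5\}*60$ has all four invariants odd, $\{4,6\}*120$ has $|h| = 3$ and $|z_1| = 5$ odd, and $\{6,6\}*120$ has $|h| = 5$ and $|z_2| = 5$ odd. In each case \cref{odd-zigzags} applies to give $q = 2q'$ at once.

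For (a) and (b), \cref{odd-zigzags} cannot be invoked directly: when $\calQ = \{3,3\}$ both $|z_1| = 4$ and $|z_2| = 4$ are even, and when $\calQ = \{3,4\}$ all three of $|h|$, $|z_1|$, and $|z_2|$ are even. I would therefore first extract a structural fact. By \pref{vf-quos}(b), $\rho_0 \s_2^{q'} \rho_0 = \s_2^{q' a}$ for some $a$ with $a^2 \equiv 1 \pmod{k}$. Both $\{3,3\}$ and $\{3,4\}$ have $p = 3$, so $(\rho_0 \rho_1)^3 = 1$ in $\G(\calP)$; successive conjugation of $\s_2^{q'}$ by $\rho_0 \rho_1$ (using that $\rho_1$ inverts $\s_2^{q'}$) then gives $(-a)^3 \equiv 1 \pmod{k}$, and this combined with $a^2 \equiv 1$ forces $a \equiv -1 \pmod{k}$. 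Hence $\rho_0$ inverts $\s_2^{q'}$, so both $\s_1 = \rho_0\rho_1$ and $\s_2$ centralize $\s_2^{q'}$, meaning $N$ lies in the center of the rotation subgroup $\G(\calP)^+ = \langle \s_1, \s_2 \rangle$.

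For (a), $\G(\calP)^+$ is then a central extension of $\G(\calQ)^+ = A_4$ by $N \cong \Z/k$ in which $\s_2$ (whose image in $A_4$ is a $3$-cycle) has order $3k$. I would enumerate the possible values of $k$ using the structure of the Schur multiplier of $A_4$ and $H^2(A_4, \Z/k)$, together with a computer algebra verification that the resulting group carries a compatible involution $\rho_0$ (acting as above), is a string C-group of type $\{3, 3k\}$, and satisfies the intersection condition. The check leaves only $k = 2$ (giving $q = 6$ with $\calP = \{3, 6\}_{(2,0)}$) and $k = 4$ (giving $q = 12$). Part (b) is handled by the same strategy with $\G(\calQ)^+$ the rotation group of the octahedron; the computer algebra enumeration yields $k \in \{2, 3, 6\}$, hence $q \in \{8, 12, 24\}$. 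The main obstacle is precisely this last step: the abstract propositions of Section~\ref{sec:def} narrow the candidates for (a) and (b) down to a one-parameter family of central extensions, but ruling out all but a short list requires direct computation in the resulting small groups.
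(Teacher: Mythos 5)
Your proof is correct. Part (c) is exactly the paper's argument: \cref{odd-zigzags} applied to the parity data in Tables~\ref{tab:small1} and~\ref{tab:small2}. For parts (a) and (b) you and the paper share the crucial structural step --- that $\rho_0$ inverts $\s_2^{q'}$, so that $N$ is central in $\langle \s_1, \s_2\rangle$ --- though you re-derive it from \pref{vf-quos}(b) together with $(\rho_0\rho_1)^3 = 1$, whereas the paper simply quotes the ``$p$ odd'' clause of \pref{vf-quos}(d) (which encodes the same computation with $\alpha = (\rho_0\rho_1)^p$). Where you diverge is the finishing step: the paper adds the single relation $\rho_0 \s_2^{q'} \rho_0 = \s_2^{-q'}$ to $[3,\infty]$ and observes (by direct computation) a collapse to a quotient of $[3,12]$, respectively $[3,24]$; you instead bound the central extension $1 \to N \to \G(\calP)^+ \to \G(\calQ)^+ \to 1$ using Schur multipliers and a CAS enumeration. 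The two are essentially equivalent --- your cohomological framing makes the a priori bound transparent (the image of $\s_2^{q'}$ in the abelianization of the relevant one-relator group has order $2$, resp.\ $3$, and the Schur multipliers of $A_4$ and $S_4$ are both $\Z/2$, which forces $k \mid 4$, resp.\ $k \mid 6$), while the paper's version is a single shorter computation; both ultimately lean on machine verification, which the paper explicitly sanctions. One small economy you missed: you did not need to re-prove the inversion statement, since \pref{vf-quos}(d) already states it for odd $p$.
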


\begin{proof}
For parts (a) and (b), Proposition~\ref{prop:vf-quos}(d) says that $\rho_0$ inverts $\s_2^{q'}$ in $\G(\calP)$.
Adding the relation $\rho_0 \s_2^{q'} \rho_0 = \s_2^{-q'}$ to the group $[3, \infty]$ with $q' = 3$ causes
a collapse to a quotient of $[3, 12]$, and when $q' = 4$ we get a collapse to a quotient of $[3, 24]$.

For part (c), we can appeal to Corollary~\ref{cor:odd-zigzags}, using the information in Table~\ref{tab:small1}.
\end{proof}

Given this bound on $q$, and thus on the size of the group, we can use~\cite{atlas} to finish the classification of non-vertex-faithful regular polyhedra.  Table~\ref{tab:small-non-vf} summarizes such polyhedra with $4$, $6$,
or $10$ vertices.

\begin{table}[]
\begin{center} \begin{tabular}{|l|l|l|l|l|l|l|} \hline
$v$ & Atlas Canonical Name & Also Known As &   $|z_1|$ & $|h|$ & $ | z_2 |$ & Universal\\ \hline
4&	  $\{3,6\}*48$  & $\{3,6\}_{(2,0)}$, $\{3,6\}_4$ & 4 & 6 & 4 & Y \\
4&	  $\{3,12\}*96$ & & 8 & 12 & 8 & N \\
\hline
6&	  $\{3,8\}*96$ & & 12 & 8 & 8 & N \\
6&	  $\{3,12\}*144$ & & 6 & 12 & 4 & Y \\
6&	  $\{3,24\}*288$ & & 12 & 24 & 8 & N \\
6&	  $\{3,10\}*120a$ & $\{3,10\}_5$ & 5 & 10 & 6 & Y \\
6&	  $\{3,10\}*120b$ & & 10 & 10 & 3 & Y \\
6&	  $\{5,10\}*120a$ & $\{5,10\}_3$ & 3 & 6 & 10 & Y \\
6&	  $\{5,10\}*120b$ & & 6 & 6 & 5 & N \\
\hline
10&   $\{5,6\}*120b$ & & 5 & 10 & 10 & N \\
10&	  $\{5,6\}*120c$ & & 10 & 10 & 5 & N \\
\hline

\end{tabular}
\caption{The non-flat, non-vertex-faithful regular polyhedra with 4, 6, or 10 vertices.}
\label{tab:small-non-vf}
\end{center}
\end{table}

With the small cases fully understood, this leads us to the following theorem.

\begin{theorem} \label{thm:2b-classification}
A regular polyhedron with $2b$ vertices (where $b$ is prime) is one of the following: 
\begin{itemize}
\item A vertex-faithful polyhedron with $b \leq 5$, described in Table~\ref{tab:small1},
\item A flat polyhedron described in Table~\ref{tab:flat-2b}, 
\item A non-vertex-faithful polyhedron with $b \leq 5$, described in Table~\ref{tab:small-non-vf}, or 
\item $(\{4,4\}_{(b,0)})^{\pi \delta}$, which is the unique non-flat vertex-faithful regular polyhedron of type $\{4,2b \}$ with $b \geq 3$. 
\end{itemize}
\end{theorem}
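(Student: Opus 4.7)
The plan is to assemble the classification by splitting on (i) whether $\calP$ is flat, and (ii) whether $\calP$ is vertex-faithful, and then invoking the structural results already accumulated in the section. First I would dispose of the flat case: every flat regular polyhedron with $2b$ vertices appears in Table~\ref{tab:flat-2b} via the analysis built on Proposition~\ref{prop:flat-classification} and Proposition~\ref{prop:flat-core-free}, so any flat $\calP$ with $2b$ vertices is accounted for in the second bullet of the statement.

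Next I would handle the non-flat, vertex-faithful case. For primes $b \leq 5$, the polyhedra are enumerated in Table~\ref{tab:small1}, giving the first bullet. For primes $b \geq 7$, Proposition~\ref{prop:qorientable} gives $q \leq 2b$, and Corollary~\ref{cor:vq} shows that $q = 2b$ forces $\calP = (\{4,4\}_{(b,0)})^{\pi \delta}$. When $q < 2b$, I would split on the action of $\G(\calP)$ on the $2b$ vertices. The primitive case is eliminated by the Cheng--Oxley theorem together with the Sylow argument already given: for $b \geq 7$ primitivity forces double transitivity, which the earlier counting rules out. In the imprimitive case the blocks have size $2$ or $b$; Subsection~\ref{subsub:bblocks} eliminates $b$ blocks of size $2$ once $b \geq 11$ (with $b \in \{7, 11\}$ double-checked against Table~\ref{tab:small1}), and Proposition~\ref{prop:only-2xb} forces the $2$-blocks-of-size-$b$ case to yield $(\{4,4\}_{(b,0)})^{\pi \delta}$. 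This places every non-flat vertex-faithful example with $b \geq 7$ in the fourth bullet.

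Finally I would handle the non-flat, non-vertex-faithful case. By Proposition~\ref{prop:fixes-verts}, any such $\calP$ covers a non-flat vertex-faithful regular polyhedron $\calQ$ with the same number of vertices. For $b \leq 5$, Table~\ref{tab:small-non-vf} lists all possibilities, giving the third bullet. For $b \geq 7$, the previous paragraph shows that the only candidate for $\calQ$ is $(\{4,4\}_{(b,0)})^{\pi \delta}$, and then Lemma~\ref{lem:nocover} forces $\calP = \calQ$, producing no new examples. Gluing the four cases together completes the classification.

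The main obstacle is purely bookkeeping at the interface between the "small $b$" regime (where the tables and Hartley's atlas~\cite{atlas} do the work) and the "large $b$" regime (where the structural arguments apply). Specifically, one must verify that Proposition~\ref{prop:only-2xb} together with Subsection~\ref{subsub:bblocks} really does cover every $b \geq 7$, that the primitivity elimination and the imprimitive analyses exhaust the possible vertex actions, and that no vertex-faithful or non-vertex-faithful polyhedron with $b \in \{2,3,5\}$ is missing from Tables~\ref{tab:small1} and \ref{tab:small-non-vf}; no single step is deep, but the quantifier over $b$ has to be tracked carefully across the six preceding lemmas and propositions.
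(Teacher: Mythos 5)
Your proposal is correct and follows essentially the same route as the paper: flat polyhedra via Table~\ref{tab:flat-2b}, small primes via Tables~\ref{tab:small1} and~\ref{tab:small-non-vf}, the non-flat vertex-faithful case for $b \geq 7$ via the primitive/imprimitive analysis culminating in Proposition~\ref{prop:only-2xb} and Corollary~\ref{cor:vq}, and the non-vertex-faithful case for $b \geq 7$ via Proposition~\ref{prop:fixes-verts} and Lemma~\ref{lem:nocover}. The paper's stated proof is just a terse summary of this same case decomposition, which you have spelled out accurately.
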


\begin{proof}
The analysis in this section proves that the only vertex-faithful non-flat regular polyhedron with $2b$ vertices and $b \geq 7$ is $(\{4,4\}_{(b,0)})^{\pi \delta}$. Lemma~\ref{lem:nocover} then says that no other (non-vertex-faithful) polyhedra with $2b$ vertices cover this polyhedron.
\end{proof}

\section{Polyhedra with a prime squared number of vertices}
\label{sec:primesq}

\subsection{Flat polyhedra}

	The flat regular polyhedra with $4$ vertices were covered in Section~\ref{sec:twiceprime} and the corresponding polyhedra are in Table~\ref{tab:flat-2b}. So we will assume that $b$ is an odd prime.
	Again, we use Proposition~\ref{prop:flat-classification} and Proposition~\ref{prop:flat-core-free} to determine
	the flat orientably regular polyhedra of type $\{b^2, q\}$.
	The only possibility for $\Lambda(b^2,q')_{i,1}$ with $\langle \s_2 \rangle$ core-free is $\Lambda(b^2, 2)_{-1,1}$.
	There are two possibilities for $\Lambda(p', q)_{-1,j}$ with $p'$ dividing $b^2$ and $\langle \s_1 \rangle$
	core-free: either $\Lambda(b^2, 2b^2)_{-1,-3}$ or $\Lambda(b, 2b)_{-1,-3}$. Putting these together,
	we find two flat orientably regular polyhedra with $b^2$ vertices:
	one of type $\{b^2, 2b^2\}$ with group $\Lambda(b^2, 2b^2)_{-1,-3}$, and one of type $\{b^2, 2b\}$ with
	group $\Lambda(b^2, 2b)_{-1,-3}$.
	
	Moving on to non-orientably regular flat polyhedra, \cite[Thm. 5.10]{tight3} implies that there are no non-orientably regular 
	flat polyhedra of type $\{b^2, q\}$ for any odd prime $b \geq 5$. When $b = 3$, there is a single non-orientably regular
	flat polyhedron, which has type $\{9, 4\}$. The group of this polyhedron is the quotient of $[9, 4]$ by the extra
	relation $(\rho_0 \rho_1 \rho_2 \rho_1)^2 \rho_2 = 1$ \cite{atlas}. 

	We summarize the flat regular polyhedra with $b^2$ vertices in Table~\ref{tab:flat-bsquared}.

	\begin{table}
	\begin{center} \begin{tabular}{|l|l|l|l|l|} \hline
	Type & Relations & Orientable & Vertex-faithful & Notes \\ \hline
	$\{9, 4\}$ & $(\rho_0 \rho_1 \rho_2 \rho_1)^2 \rho_2 = 1$ & N & N & \\ \hline
	$\{b^2, 2\}$ & $\s_2^{-1} \s_1 = \s_1^{-1} \s_2$ & Y & N & \\ \hline
	$\{b^2, 2b\}$ & $\s_2^{-1} \s_1 = \s_1^{-1} \s_2$ & Y & N & \\ \hline
	$\{b^2, 2b^2\}$ & $\s_2^{-1} \s_1 = \s_1^{-1} \s_2$ & Y & N & \\ \hline
	\end{tabular}
	\caption{Flat regular polyhedra with $b^2$ vertices, where $b$ is an odd prime}
	\label{tab:flat-bsquared}
	\end{center}
	\end{table}
	
\subsection{Non-flat polyhedra}
	
Fully classifying the non-flat regular polyhedra with $b^2$ vertices appears to be somewhat more difficult than the problems we have considered so far. Instead, we will consider only the smallest non-flat regular polyhedra with $b^2$ vertices (where by ``smallest'' we mean having the fewest flags). Since the number
of flags is $2qv = 2qb^2$, finding the smallest polyhedra amounts to finding the smallest value of $q$. Furthermore, since
the number of flags is also $4e$, we find that $q$ must be even if $b$ is odd. If $\calP$ is not flat, then $q \neq 2$. Then the smallest 
possible value for $q$ is $q = 4$, and there is at least one non-flat regular polyhedron with $b^2$ vertices: the toroidal map
$\{4, 4\}_{(b,0)}$. Let us classify the non-flat regular polyhedra of type $\{p, 4\}$ with $b^2$ vertices.
	
\begin{lemma}\label{lem:normalS}
Suppose that $b$ is an odd prime and that $\calP$ is a non-flat regular polyhedron of type $\{p, 4\}$ with $b^2$ vertices. Then:
\begin{enumerate}
\item $\G(\calP)$ has a normal Sylow $b$-subgroup.
\item $p \neq b$.
\end{enumerate}
\end{lemma}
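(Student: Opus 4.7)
The order of $\G(\calP)$ is $|\G(\calP)| = 2vq = 8b^2$, so a Sylow $b$-subgroup $S$ has order $b^2$ while the vertex stabiliser $\G_0 := \langle \rho_1, \rho_2 \rangle$ has order $8$, and the coprimality $\gcd(b^2, 8) = 1$ forces $S \cap \G_0 = \{1\}$ and $\G(\calP) = S \G_0$. For part~(a), Sylow's theorems say that the number $n_b$ of Sylow $b$-subgroups divides $8$ and is congruent to $1$ modulo $b$; for all odd primes $b$ outside $\{3, 7\}$ this immediately gives $n_b = 1$, since $2$, $4$, and $8$ are all non-trivial modulo $b$.

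For $b = 7$, the only other possibility is $n_b = 8$, in which case $N_{\G(\calP)}(S) = S$. Because $|S| = 49$ makes $S$ abelian, $S$ is central in its own normaliser, and Burnside's normal $p$-complement theorem yields a normal subgroup $K$ of order $8$ with $\G(\calP) = KS$. The quotient $\G(\calP)/K$ has odd order $49$, so the images of the involutions $\rho_0, \rho_1, \rho_2$ must all be trivial, forcing $\G(\calP) \leq K$ and contradicting $|K| = 8$. For $b = 3$, we instead consult the enumeration~\cite{conder-atlas} of all regular polyhedra with at most $4000$ flags: the only non-flat candidate with $9$ vertices of type $\{p, 4\}$ is $(\{4,4\}_{(3,0)})^{\pi}$ listed in Table~\ref{tab:small1}, whose Sylow $3$-subgroup is the normal group of toroidal translations, a fact easily confirmed on a CAS.

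For part~(b), assume for a contradiction that $p = b$ and let $S$ be the normal Sylow $b$-subgroup from part~(a). The element $s_0 := \rho_0 \rho_1$ has order $p = b$ and so lies in the unique Sylow $b$-subgroup $S$; hence $\rho_0 = s_0 \rho_1$, and $\rho_0^2 = 1$ gives $\rho_1 s_0 \rho_1 = s_0^{-1}$. Expanding the involution $(\rho_0 \rho_2)^2 = 1$ yields
\[ 1 \; = \; (s_0 \s_2)^2 \; = \; s_0 \bigl( \s_2 s_0 \s_2^{-1} \bigr) \s_2^{2}, \]
and by normality of $S$ we have $\s_2 s_0 \s_2^{-1} \in S$. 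Thus the product $s_0(\s_2 s_0 \s_2^{-1})$ lies in $S$ while $\s_2^{-2}$ lies in $\G_0$; since $S \cap \G_0 = \{1\}$, both factors must be trivial, and in particular $\s_2^2 = 1$, contradicting $|\s_2| = q = 4$.

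The main obstacle is ruling out $n_b > 1$ for the small primes $b \in \{3, 7\}$ in part~(a): the case $b = 7$ is clean via Burnside's transfer theorem, but for $b = 3$ the weaker constraints on $N_{\G(\calP)}(S)$ block the same approach, and the proposal above falls back on the established enumeration of small regular polyhedra.
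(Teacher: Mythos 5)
Your proof is correct, and part (b) takes a genuinely different --- and arguably cleaner --- route than the paper's. For part (a) the paper performs the same Sylow count and then simply defers $3 \le b \le 7$ to a CAS check against the atlas; you sharpen this by noting that $b = 5$ already follows from the congruence, and by eliminating $n_7 = 8$ via Burnside's normal $p$-complement theorem, the key point being that a group generated by involutions cannot have a proper normal subgroup of odd index. Only $b = 3$ still needs the enumeration, and there your inventory is slightly off: there are two non-flat regular polyhedra of type $\{p,4\}$ with $9$ vertices, namely $\{4,4\}_{(3,0)}$ itself and its Petrial $\{6,4\}*72$, not just the Petrial --- though since the two share the same underlying abstract group, the verification of a normal Sylow $3$-subgroup is unaffected. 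For part (b) the paper argues on faces: $|\G(\calP)| = 2pf$ with $p = b$ gives $4b$ faces, the orbits of the normal Sylow $b$-subgroup form a block system on the faces, and one checks that all three generators stabilize the two blocks $B_1$ and $(B_1)\rho_2$, so the number of faces would have to be twice a divisor of $b^2$, which $4b$ is not for odd $b$. Your argument instead exploits the factorization $\G(\calP) = S \langle \rho_1, \rho_2 \rangle$ with trivial intersection, places $\rho_0\rho_1$ inside $S$, and extracts $\s_2^2 = 1$ from the relation $(\rho_0\rho_2)^2 = 1$; this is shorter, purely algebraic, and works verbatim whenever $q$ is coprime to $b$, whereas the paper's count is tied to the specific arithmetic of $4b$ versus $2b^2$.
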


\begin{proof}
First, let $n$ be the number of Sylow $b$-subgroups of $\G (\calP)$. We have $|\G(\calP)| = 2qv = 8b^2$.
Then by the Sylow theorems, $n$ divides 8 and $n \equiv 1 ( \textrm{mod } b)$.  
Clearly if $b \ge 11$ then $n = 1$. For $3 \le b \le 7$, we can verify the claim using \cite{atlas} and a CAS.

Now, suppose that $p = b$ and let $S$ be the normal Sylow $b$-subgroup. Since $|\G(\calP)| = 8b^2 = 2pf$, the polyhedron
$\calP$ has $4b$ faces. Consider the action of $\G(\calP)$ on the faces. Since $S$ is normal, the orbits of the faces
under $S$ form a system of blocks for $\G$. Let $B_1$ be the block containing the base face and let $B_2 = (B_1) \rho_2$.
Then $\rho_0$ stabilizes $B_1$ and thus $B_2$ (since $\rho_0$ commutes with $\rho_2$), and $\rho_1$ stabilizes
$B_1$. Furthermore, since $\rho_0 \rho_1$ has order $b$, it must lie in $S$, and so it stabilizes $B_2$. Thus $\rho_1$
also stabilizes $B_2$, and so the block system consists of these two blocks only. The size of each block divides
$b^2$, and so the total number of faces must divide $2b^2$, contradicting that there are $4b$ faces.
\end{proof}

We end with a classification of the smallest regular non-flat polyhedra with a prime squared number of vertices.

\begin{theorem}
For $b \geq 3$, up to isomorphism, there are exactly two smallest regular non-flat polyhedra with $b^2$ vertices: 
the toroidal map $\{4, 4\}_{(b,0)}$ and its Petrial of type $\{2b, 4\}$. \end{theorem}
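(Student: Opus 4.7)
The plan is to show first that $q = 4$ is the smallest possible vertex-figure size for such a polyhedron, and then to classify the non-flat regular polyhedra of type $\{p, 4\}$ with $b^2$ vertices. Since the flag count is $2qv = 2qb^2$, minimizing flags is equivalent to minimizing $q$. The identity $|\G(\calP)| = 4e = 2qv$ combined with $v = b^2$ odd forces $q$ to be even, and non-flatness excludes $q = 2$, so $q \geq 4$. The toroidal map $\{4, 4\}_{(b, 0)}$ has $b^2$ vertices with $q = 4$, realizing this minimum.

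Fix a non-flat regular polyhedron $\calP$ of type $\{p, 4\}$ with $b^2$ vertices and set $\G := \G(\calP)$. By \lref{normalS}, $\G$ has a normal Sylow $b$-subgroup $S$ of order $b^2$, and since $\gcd(b^2, 8) = 1$ the intersection $S \cap \langle \rho_1, \rho_2 \rangle$ is trivial, giving $\G = S \rtimes \langle \rho_1, \rho_2 \rangle$ with $\G/S \cong D_4$. The image $\bar\rho_0$ is a non-identity involution in $D_4$ commuting with $\bar\rho_2$. One rules out $\bar\rho_0 = \bar\rho_2$ (which would force $\rho_0 \rho_2$ to be a non-identity involution in the odd-order group $S$), leaving $\bar\rho_0 = \bar\rho_1 \bar\rho_2 \bar\rho_1$ (Case A) and $\bar\rho_0 = (\bar\rho_1 \bar\rho_2)^2$ (Case B). Write $\rho_0 = y t$ with $t$ a lift of $\bar\rho_0$ and $y \in S$; then $\rho_0^2 = 1$ implies $t$ inverts $y$, and $\rho_0 \rho_2 = \rho_2 \rho_0$ implies $\rho_2$ centralizes $y$. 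This rules out $S \cong C_{b^2}$: on a cyclic group the action of $D_4$ factors through $\{\pm 1\} \subset \mathrm{Aut}(S)$ and hence is abelian, so $\phi(t)$ and $\phi(\rho_2)$ coincide as elements of $\{\pm 1\}$; combined with $\rho_2$ centralizing $y$ and $t$ inverting $y$ this forces $y = -y$, hence $y = 1$, collapsing $\G$ to $\langle \rho_1, \rho_2 \rangle$ of order $8$. Therefore $S \cong \mathbb{F}_b^2$.

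Taking $\{y, y^{\rho_1}\}$ as a basis of $S$---linearly independent because the $D_4$-orbit of $y$ must generate $S$---the relations pin down $\phi(\rho_1) = \bigl(\begin{smallmatrix}0 & 1\\ 1 & 0\end{smallmatrix}\bigr)$ and, after adjusting the basis, $\phi(\rho_2) = \bigl(\begin{smallmatrix}1 & 0\\ 0 & -1\end{smallmatrix}\bigr)$. The centralizer of $\phi(D_4)$ in $GL_2(\mathbb{F}_b)$ is the subgroup of scalar matrices, so $y$ is determined up to isomorphism, and each case yields a unique polyhedron. Direct computations in the semidirect product give $(\rho_0 \rho_1)^4 = 1$ with $(\rho_0 \rho_1)^2 \neq 1$ in Case A, so $p = 4$; and $(\rho_0 \rho_1)^2 \in S$ of order exactly $b$ in Case B, so $p = 2b$. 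The Case A polyhedron has type $\{4, 4\}$ and $b^2$ vertices, which by the classification of regular toroidal maps is $\{4, 4\}_{(b, 0)}$. Since $(\bar\rho_1 \bar\rho_2)^2 = \bar\rho_1 \bar\rho_2 \bar\rho_1 \cdot \bar\rho_2$, the Case B generator equals the Case A generator times $\rho_2$, exhibiting Case B as the Petrial $(\{4, 4\}_{(b, 0)})^{\pi}$, which is itself a polyhedron by the remark in Section~\ref{sec:def}. The main obstacle is the normalization step that simultaneously pins down the action of $D_4$ on $S$ and the element $y$, since without a sharp uniqueness statement Cases A and B could in principle yield multiple polyhedra each.
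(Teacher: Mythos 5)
Your argument is correct, but it takes a genuinely different route from the paper's. The paper argues arithmetically: from $|\G(\calP)| = 8b^2 = 2pf$ and non-flatness it narrows $p$ to $\{4, b, 2b, 4b\}$, eliminates $p = b$ via part (b) of \lref{normalS} and $p = 4b$ via the dual of \tref{prime-verts}, then settles $p = 4$ by the classification of finite regular polyhedra of type $\{4,4\}$ and $p = 2b$ by dualizing and invoking \tref{2b-classification}. You instead use only part (a) of \lref{normalS} and perform a direct structural analysis: writing $\G(\calP) = S \rtimes \langle \rho_1, \rho_2\rangle$ with $S$ the normal Sylow $b$-subgroup, ruling out $S$ cyclic, normalizing the action of the dihedral complement on $S \cong \mathbb{F}_b^2$, and then computing the two admissible positions of $\bar\rho_0$ to obtain $p = 4$ and $p = 2b$ explicitly. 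Your approach is more self-contained (it does not lean on the $2b$-vertex classification or the prime-vertex theorem, and it re-derives $p \neq b$ rather than importing it), at the cost of more computation; the paper's is shorter given the machinery already developed in earlier sections. Both routes still need the external facts that the only type-$\{4,4\}$ regular polyhedron with $b^2$ vertices is $\{4,4\}_{(b,0)}$ and that its Petrial is again a polyhedron.

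Two small points are worth tightening, though neither is a genuine gap. First, the claim that the orbit of $y$ under $\langle\rho_1,\rho_2\rangle$ generates $S$ deserves a sentence: since $\G(\calP) = \langle y t, \rho_1, \rho_2\rangle$ and $S$ is abelian, the normal closure of $y$ is generated by its $\langle\rho_1,\rho_2\rangle$-conjugates and, together with $\langle\rho_1,\rho_2\rangle$, must be all of $\G(\calP)$; hence it equals $S$. Second, in the cyclic case the assertion that $\phi(t)$ and $\phi(\rho_2)$ coincide in $\{\pm 1\}$ is immediate in Case A (the two elements are conjugate in the dihedral complement) but not in Case B, where $t = (\rho_1\rho_2)^2$ maps to a square and hence to $+1$ regardless of $\phi(\rho_2)$; there the collapse is even more direct, since $\phi(t) = +1$ combined with $t$ inverting $y$ already forces $y = y^{-1} = 1$. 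With those lines added, the proof stands.
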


\begin{proof}
Suppose that $\calP$ is a smallest regular non-flat polyhedron with $b^2$ vertices. We have already established that $\calP$ must have type $\{p, 4\}$ for some $p$. Furthermore, the toroidal map $\{4,4\}_{(b,0)}$ and its Petrial of type $\{2b, 4\}$ both have $b^2$ vertices. First, let us show that there is no other possible value of $p$. We have that $|\G(\calP)| = 8b^2 = 2pf$. In order for $\calP$ to be non-flat, we need $2 < p < b^2$ and for $p$ to properly divide $4b^2$. Lemma~\ref{lem:normalS} rules out the case $p = b$, and if $p = 4b$, then $f = b$, and the dual of Theorem~\ref{thm:prime-verts} implies that $\calP$ must be flat. So $p = 4$ or $p = 2b$.

Now, if $p = 4$, then $\calP$ is a regular polyhedron of type $\{4, 4\}$, and so it must be either the toroidal map $\{4, 4\}_{(a,0)}$, which has $a^2$ vertices, or the toroidal map $\{4,4\}_{(a,a)}$, which has $2a^2$ vertices. So $\calP$ must be $\{4,4\}_{(b,0)}$. If instead $p = 2b$, then $|\G(\calP)| = 8b^2 = 2pf$ implies that $f = 2b$. Then $\calP^{\delta}$ is a non-flat regular polyhedron of type $\{4, 2b\}$ with $2b$ vertices. By Theorem~\ref{thm:2b-classification}, $\calP^{\delta}$ must be $(\{4,4\}_{(b,0)})^{\pi \delta}$, proving the claim.
\end{proof}


\textbf{Acknowledgement:} We would like to thank the referees for many useful comments.

\bibliographystyle{amsplain}
\bibliography{gabe}

\end{document}